\newtheorem{theorem}{Theorem}[subsection]
\newtheorem{lemma}[theorem]{Lemma}
\newtheorem{cor}[theorem]{Corollary}
\newtheorem{proposition}[theorem]{Proposition}
\theoremstyle{remark}
\newtheorem{remark}[theorem]{Remark}
\numberwithin{equation}{subsection}
\def\NN{\mathscr{N}}
\gdef\mnote#1{\marginpar{\footnotesize
 \tolerance\@M\spaceskip2.6\p@ plus10\p@ minus.9\p@\rm#1}}}
\def\Dg:{\endgraf{\bf Dg:\enspace}\ignorespaces}
\let\Bbb\mathbb
\let\Cal\mathcal
\DeclareMathOperator{\ind}{I^e}
\def\D{\Delta}
\def\G{\Gamma}
\def\g{\gamma}
\def\P{\Cal P}
\def\sm{\smallsetminus}
\DeclareMathOperator{\sgn}{sign}
\newcommand{\be}{\begin{equation}}
\newcommand{\ee}{\end{equation}}
\let\ge\geqslant % >=
\let\le\leqslant % <=
\let\la\langle
\let\ra\rangle
\let\til\widetilde
\def\Z{\Bbb Z}
\def\R{\Bbb R}
\def\C{\Bbb C}
\def\X{\Cal X}
\def\L{\Lambda}
\def\DX{\D^{\X}}
\def\XR{\X_\R}
\def\DXR{\DX_{\R}}
\def\Sym{\operatorname{Sym}}
\def\Hom{\operatorname{Hom}}
\def\SO{\operatorname{SO}}
\def\M{\Cal M^*}
\def\MM{\Cal M}
\def\DM{\D^{\MM}}
\def\Sw{\operatorname{S^{loc}}}
\def\P{\Cal P}
\def\DP{\D^{\P}}
\def\DPP{\D^{\P,1}}
\def\DPPinf{\D^{\infty,1}}
\def\PR{\Cal P_\R}
\def\DPR{\D^{\P}_\R}
\def\DPinf{\D^{\infty}}
\def\pr{\operatorname{pr}}
\def\Sec{\operatorname{Sec}}
\def\PGL{\operatorname{PGL}}
\def\Jet{\operatorname{Jet}}
\def\crem{\operatorname{Cr}}
\def\pol{\operatorname{pol}}
\newcommand{\addresseshere}{%
  \enddoc@text\let\enddoc@text\relax
}
\begin{document}

\renewcommand{\arraystretch}{1.2}
\title[Segre indices and Welschinger wieghts]
{Segre indices and Welschinger weights as options
for  invariant count of real lines}
\author[]{ S.~Finashin, V.~Kharlamov}
\address{
 Department of Mathematics, Middle East Tech. University\endgraf
 06800 Ankara Turkey}
 \email{serge@metu.edu.tr}
\address{Universit\'{e} de Strasbourg et IRMA (CNRS)\endgraf 7 rue Ren\'{e}-Descartes, 67084 Strasbourg Cedex, France}
\email{kharlam@math.unistra.fr}

\subjclass[2010]{Primary:14P25. Secondary: 14N10, 14N15.}

\keywords{}
\date{}

\begin{abstract}
In our previous paper  \cite{abundance} we have elaborated a certain signed count of real lines on real hypersurfaces of degree $2n-1$ in $P^{n+1}$.
Contrary to the honest "cardinal" count, it is independent of the choice of a hypersurface, and by this reason provides
a strong lower bound on the honest count. In this count the contribution of a line is its local input
to the Euler number of a certain auxiliary vector bundle. The aim of this paper is to present other, in a sense more geometric, interpretations of
this local input.
One of them results from
a generalization of Segre species of real lines on cubic surfaces and another from
a generalization of Welschinger weights of real lines on quintic threefolds.
\end{abstract}

\maketitle
%\tableofcontents
{\begin{otherlanguage*}{russian}
\setlength\epigraphwidth{.40\textwidth}
\epigraph{
Ужасно интересно \\
Все то, что неизвестно;\\
Ужасно неизвестно, \\
Все то, что интересно.
}
{Г.~Остер, \\
\textit{из мультфильма \\ "Тридцать Восемь Попугаев'' }
\footnotetext{Humorous version of Tacit's "Omne ignotum pro magnifico est"; composed by G.Oster, \textit{for the cartoon "Thirty Eight Parrots"} }
}\end{otherlanguage*}
}

\rightline
{\vbox{\hsize80mm
\noindent\baselineskip10pt{\it\footnotesize
\vskip3mm
\noindent
\rm
}}}
\vskip5mm

\section{Introduction}\label{intro}

\subsection{The subject} Let $X$ be a generic real hypersurface of degree $2n-1, n\ge 2$, in
the real projective space of dimension $n+1$. Denote by $\NN_\C$ and $\NN_\R$ the number of complex
and, respectively, real lines on $X$. These numbers are finite, since $X$ was chosen generic.
The first number
$\NN_\C$ depends only on $n$, while $\NN_\R$ depends on the choice of $X$. For example, if $n=2$ (the case of cubic surfaces)
and $X$ is non-singular, then $\NN_\R$ may take values $3, 7, 15,$ and $27$.

It was shown in \cite{abundance} and \cite{OT} that $\NN_\R\ge (2n-1)!!$ for any $n$.
The proof
was based on the following signed count of the real lines that
makes the total sum independent of $X$.
A polynomial
defining $X$ yields a section
of the  symmetric power $\operatorname{Sym}^{2n-1}(\tau^*_{2,n+2})$ of
the tautological covariant vector bundle $\tau^*_{2,n+2}$ over the real Grassmannian $G_\R(2, n+2)$ and zeros
of this section are precisely the real lines $l\subset X$.
The local Euler numbers $\ind(l,X)=\pm1$ of the zeros sum up to the total Euler number  $\NN^e_\R$ of $\operatorname{Sym}^{2n-1}(\tau^*_{2,n+2})$
(which is independent of $X$)
and we obtain immediately
$\NN_\R\ge \vert \NN^e_\R\vert$, whereas
$\NN^e_\R=(2n-1)!!$.

This brought up a natural question: {\it What
can be a direct geometric interpretation of these local indices $\ind(l,X)$} ?

In \cite{abundance}  only a partial answer to it
was given.
It was shown that
for cubic surfaces $X$ (the case $n=2$)
$\ind(l,X)$ coincides with the {\it Welschinger weight} $W(l,X)$ of $l$ on $X$,
as well as with its Segre index $S(l,X)$ expressing numerically Segre's division of lines in two species, elliptic and hyperbolic.
In a nutshell, for cubic surfaces, $W(l,X)$ is equal to
$e^{\frac{\pi i}{2} (q(l)-1)}=\pm1$ where $q: H_1(X_\R,\Z/2)\to \Z/4$
is the quadratic function representing the "canonical" $Pin^-$ structure induced on $X_\R$ from $P^3_\R$,
while the Segre index is
$e^{\frac{\pi i}2 s}=\pm1$ where $s$ is the number of fixed points of the
involution $l_\R\to l_\R$
traced out on $l_\R$ by the conics which are residual intersections
of $X$ with the hyperplanes containing $l$.

But the case $n>2$ was left open. As for the Welschinger weight, its definition for lines on higher dimensional
varieties did not cause apparent difficulties,
but our proof of $\ind(l,X)=W(l,X)$ resisted an immediate generalization because some particular properties of dimension $2$ were used.
Concerning
the Segre index,
it was not even clear how to extend the definition from $n=2$ to $n>2$.

In this paper we solve the both problems. Namely, we produce a simplified, not appealing to any auxiliary $Pin$-structure, version of Welschinger weights
of real lines and prove that they coincide with $\ind(l,X)$ (Theorem \ref{Euler=Welsh}).
In what concerns the second problem, we introduce the notion of {\it Segre index}  that generalizes Segre's division of lines into
elliptic and hyperbolic and has a transparent geometric meaning, and then prove that it
also coincides with $\ind(l,X)$ (Theorem \ref{Euler=Segre}).

Our definition of the Segre index, like Segre's definition for lines on cubic surfaces, is purely algebraic. But starting from $n\ge 3$ it is no more "one-move" definition.
We start from replacing a real line $l\subset X$
 by a real rational curve of degree $2n-2$ in $P^{n-1}$ that
describes the first jet of $X$ along $l$. We
look at all real $(n-3)$-dimensional $(2n-4)$-secants of this curve,
associate an involution on $P^1$ with each of these secants, and define
a weight (equal to $\pm 1$) of a secant depending on the reality of the fixed points (see details in Section \ref{def-S}).
Finally, we define
the Segre index to be the product of the latter weights.

The proofs of Theorems \ref{Euler=Welsh} and \ref{Euler=Segre} are independent of each other,
but based on the same strategy: we prove that for all the three types of indices (local Euler number, Welschinger weight, and
Segre index) considered as functions on
the space of curves mentioned above satisfy the same wall crossing rules. After this, it remains to check their coincidence on an example.

We hope that these results and approaches may shade a new light on the hidden structures behind this sort
of enumerative invariants, including, for example, the built-up in \cite{3spaces}
invariant signed count of odd dimensional real projective planes on projective hypersurfaces.
It is also interesting to understand how
the Segre index
is related to the quadratic form of
C.~Okonek and A.~Teleman (\cite{OT}, formula (16)) extracted from the Jacobian matrix
$A_C$ (see Subsection \ref{discriminants} below).

For related arithmetic and probabilistic aspects of counting lines on hypersurfaces, we send
the reader to \cite{KW} and, respectively, \cite{BLetc}.

\subsection{Structure of the paper}
In Section 2 we define the Welschinger weight $W(l,X)$ and in Section 3 we prove that it coincides with the local Euler number $\ind(l,X)$.
In Section 4 we switch to consideration of  multisecants to rational curves and
present a few technical results related to the so-called  {\it Castelnuovo count} of multisecants of codimension 2.
They are used in Section \ref{def-S} to define the Segre index
$S(l,X)$
and to prove its equality with $\ind(l,X)$.
Finally, in Section 6, we present several other interpretations
of the Segre index:
first
for lines on quintic threefodls and on septic fourfolds, and then in general.

\subsection{Conventions}
In this paper algebraic varieties by default are complex; for example, $P^n$ stands for the complex projective space.
By $\Sym^{k}(P^1)$, $k\ge1$,
we denote the $k$-th symmetric power of $P^1$, or equivalently, the set of effective divisors
of degree $k$ on $P^1$. When $C$ is a parametrized curve $C: P^1\to P^n$
and $M\subset P^n$ is a divisor, the notation $M\cdot C$ is used as an abbreviation for the intersection product considered
as a divisor on $P^1$.

A projective variety $X\subset P^n$ is
called {\it real} if it is invariant under the complex conjugation in $P^n$.
For a real variety $X$, we denote by $X_\R$
the set of complex points of $X$ that are fixed by the complex conjugation.
Speaking on non-singular real varieties, we mean that the whole $X$ (not only $X_\R$) has no singular points.

A complex (holomorphic) vector bundle $\pi:W\to X$ is called {\it real}, if $X$, $W$, and $\pi$ are defined over the reals,
and the real structure in $W$ is anti-linear.
Similar conventions are applied to all other algebraic notions, like splitting of a vector bundle, deformations, isomorphisms etc.

Spaces $P^n$ and bundles $\Cal O_{P^n}(n)$ are always equipped with the canonical real structures.

\subsection{Acknowledgements}
A strong impulse to this study
came from a short, but illuminating conversation of the first author with Ilya Zakharevich.
We thank also Fedor Zak
for helpful advices on manipulating secant spaces, and Alex Degtyarev for providing us a reference to a real version of Birkhoff-Grothendieck theorem.
A significant part of this work
was carried out during our joint visits to the Max Planck Institute for Mathematics
as well as during visits of the first author to the Strasbourg University, while a final touch was given during our joint visit
to the Istanbul Center for Mathematical Sciences,
and we wish to thank these institutions for hospitality
and excellent working conditions.

The second author was partially funded by the grant ANR-18-CE40-0009 of {\it Agence Nationale de Recherche}.

\section{Du c\^ot\'e de chez Welschinger}\label{def-W}

In this section we assume that $X\subset P^{n+1}$ is a real
hypersurface of degree $2n-1$ and $l\subset X$ is a real
line which does not contain any singular point of $X$.

\subsection{Balancing condition}
Over $\C$, due to Birkhoff-Grothendieck theorem in its standard version,
the normal bundle $ N_l$ of $l$ in $X$ splits into a sum of line bundles,
$ N_l=\oplus_{i=1}^{n-1} L_i,\, L_i=\Cal O_l({m_i}),$ where
$$ m_1+\dots+m_{n-1}= n+2-(2n-1)-2=-(n-1)
$$
by the adjunction formula.
Under the usual, descending order, convention, the list
of integers $m_1\ge \dots\ge m_{n-1}$ depends only on $N_l$ and is called the {\it splitting type} of $N_l$.
The splitting itself is uniquely defined up to multiplication by non-degenerate upper block-triangular matrices $A$ whose elements
$a_{ij}$ are 0 if $m_i-m_j<0$ and homogeneous polynomials of degree $m_i-m_j$ in two variables if $m_i-m_j\ge 0$.
The vector bundle $N_l$ and the line $l$ are called
{\it balanced} or {\it stable}, if $\vert m_i- m_j\vert \le 1$ for each pair $(i,j)$.  In our case, $N_l$ is balanced if and only if $m_i=-1$
for each $i$.

This traditional terminology is motivated by the fact that the codimension of a given splitting type in the versal deformation space of vector bundles over $P^1$
is equal to the sum of $m_i- m_j - 1$ taken over $m_i-m_j\ge 2$
(see, {\it e.g.}, \cite{briesk} or \cite{donin}). It can be derived then that
the splitting type of a vector bundle is preserved
under deformations if
the vector bundle is balanced and, conversely,
a splitting of a balanced vector bundle
extends to any local deformation of this bundle.

\subsection{Welschinger weights}\label{Wweights}
Let us assume that
$X$ and $l$ are both real. Then, the bundle $N_l$ is also real and,
according to the real version of Birkhoff-Grothendieck theorem (see \cite{splitting} for a statement and a proof over any field),
a splitting $ N_l=\oplus_{i=1}^{n-1} L_i$ seen as an isomorphism of complex vector bundles $N_l \to \oplus^{n-1}_{i=1}\Cal O_l({m_i})$
can be chosen real with respect to the standard real structure in $\oplus^{n-1}_{i=1}\Cal O_l({m_i})$.
Similarly to the complex version, this isomorphism is unique up to
multiplication by non-degenerate upper block-triangular matrices $A$  whose elements $a_{ij}$
are 0 if $m_i-m_j<0$ and real homogeneous polynomials of degree $m_i-m_j$ in two variables if $m_i-m_j\ge 0$.
Furthermore, if a real vector bundle is balanced, its real splitting locally extends
to any real deformation of the vector bundle.

Choose
an auxiliary $(n-1)$-subspace $H\subset P^{n+1}_\R$ disjoint from $l_\R$ (say, the subspace dual to $l$ with respect to the Fubini-Study metric)
and identify it with $P^{n-1}_\R$. Then, the splitting $N_l=\oplus_{i=1}^{n-1}L_i$ yields on the real locus $l_\R\subset X_\R$ a framing formed by
real projective lines $\nu_i(t)\subset P^{n+1}_\R$, $i=1,\dots, n-1$,  $t\in l_\R$. Namely, the
line $\nu_i(t)$ joins 
the point $t\in l_\R$ with the point 
where $H$ meets the projective 2-plane that contains $l$ and whose tangent 
plane at $t$
projects to $L_i$ in $N_l$.
Thus, we obtain an $(n-1)$-tuple $s(t)=([\nu_1(t)],\dots,[\nu_{n-1}(t)])$
of points $[\nu_i(t)]=\nu_i(t)\cap H$, which is
{\it projectively non-degenerate}, that is, spanning  $(n-2)$-subspace in $H=P^{n-1}_\R$ for each $t\in l_\R$.
As $t$ varies, $s(t)$ form a loop
in the space of such projectively non-degenerate $(n-1)$-tuples in $P^{n-1}_\R$.

\lemma\label{null-homotopic}
If all $m_i$ are odd {\rm(}in particular, if line $l$ is balanced{\rm),} the
loop $s(t)$, $t\in l_\R$, lifts to a loop of $(n-1)$-tuples of linear independent
vectors in the sphere $S^{n-1}$ that covers $P^{n-1}_\R$.
\endlemma

\proof
A loop in $P^{n+1}_\R$ represented by $l_\R$ is lifted by the covering
$\phi:S^{n+1}\to P^{n+1}_\R$ to
a half-circle, $S^1_+\subset S^{n+1}$.
Consider also the lifting to $S^{n+1}$ of a vector field tangent to $\nu_i$, $i\in\{1,\dots,n-1\}$, namely, a field
 $e_i(\theta)\in T_\theta(S^{n+1}), \theta\in S_+^1$, such that
$d\phi (e_i(\theta))$ is tangent to $\nu_i(t)\subset P_\R^{n+1}, t=\phi(\theta)$.
At each point $\theta\in S_+^1$ the
vectors tangent to $S^{n+1}$ and normal to $S_+^1$ are
parallel to the hyperplane $\R^n$ generated by $S^{n-1}$, and thus
we can identify vectors $e_i(\theta)$ with corresponding vectors in above $\R^n$.
For each $1\le i\le n-1$, since $m_i$ is odd, the real line vector bundle generated by $\nu_i(t)$ over $l_\R$ is non-orientable.
Thereby,
the path $(e_1(\theta), \dots, e_{n-1}(\theta))$ (with $e_i$ considered as vectors in above $\R^n$) is a loop of $(n-1)$-tuples of linear independent vectors in $S^{n-1}$.
By construction, this loop covers the loop $s$.
\endproof

Thus, under the assumptions of Lemma \ref{null-homotopic},
the loop $s$ is lifted to a loop of $(n-1)$-frames of linear independent vectors
in $S^{n-1}\subset\R^n$, which can be made orthogonal by
Gramm-Scmidt orthogonalization and after completing to $n$-frame,
yields a loop $\til s(t)$ in $\SO_n$, whose homotopy class we denote
$[\til s]\in\pi_1(\SO_n)$
(this group is $\Z/2$ for $n\ge 3$, and $\Z$ for $n=2$).
We define the {\it Welschinger weight} of $l$ as
$$W(l,X)=(-1)^{[\til s]}\in \{+1, -1\}.$$

Due to Proposition \ref{welldefined} below, this weight
is independent of all the choices made during the construction of the loop $\til s$.

\proposition\label{welldefined}
If all $m_i$ are odd {\rm(}in particular, if $l$ is balanced{\rm),}
the Welschinger weight $W(l,X)$ is well defined.
\endproposition

\proof
The loop $s$, whose choice depends on a Birkhoff-Grothendieck splitting, is defined up to
pointwise multiplication of $s$ by a  homotopy trivial
loop in $GL(n-1, \R)$ (due to block-triangular nature of the automorphism group of $N_l$).
The subsequent lifting of the points $[\nu_i(t)]$ to $S^{n-1}$ is defined up to conjugations, whereas
orthogonalisation of the framing is a canonical operation.
Hence, the class $[\til s]\in\pi_1(\SO_n)$ is independent of all the choices made.
\endproof

\begin{remark}
The assumption on $m_i$ made in Proposition \ref{welldefined} and Lemma \ref{null-homotopic} is trivially satisfied for $n=2$.
\end{remark}

\section{Proof of $W(l,X)=\ind(l,X)$}\label{W=I}

In this section we fix a real line $l\subset P_\R^{n+1}$, $n\ge 2$, and
also a real
coordinate system $u,v,x_1,\dots,x_n$ in $P_\R^{n+1}$ such that $l=\{x_1=\dots=x_n=0\}$.

\subsection{Background}\label{background}
Each homogeneous
polynomial $F=F(u,v,x_1,\dots, x_n)$, $\deg F=2n-1$,
defining in $P^{n+1}$ a hypersurface  containing $l$ can be presented as
\begin{equation}\label{tag}
F=x_1p_1(u,v)+\dots+x_np_n(u,v) +Q
(u,v,x_1,\dots,x_n),
\end{equation}
where  homogeneous degree $2n-2$ polynomials $p_k, 1\le k\le n$, are uniquely defined
and $Q$ vanishes to order $2$ along $l$.
We denote by $\Cal H$  the projective space of all such hypersurfaces and
equip it with the standard projective coordinates, the coefficients of $F$. Inside $\Cal H$ we
consider a subset, $\X$, formed by hypersurfaces that are non-singular at each point of $l$.
Both $\Cal H$ and $\X$ bear natural real structures, and their real points
represent real hypersurfaces.

\begin{lemma}\label{connected}
For any $n\ge 2$, $\X$ is a Zarisky open subset of $\Cal H$ (in particular, $\X$ is a smooth irreducible quasi-projective variety), and, for any $n\ge 3$, $\X_\R$
is a connected smooth manifold.
\end{lemma}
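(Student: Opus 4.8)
The plan is to treat the two assertions separately: openness and irreducibility of $\X$ follow from a direct Jacobian computation identifying the complement, while connectedness of $\X_\R$ will come from a codimension count for the relevant resultant locus.

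First I would identify $\Cal H\setminus\X$. Writing a degree $2n-1$ hypersurface through $l$ as in \eqref{tag} and differentiating at a point $P=[u_0:v_0:0:\dots:0]\in l$, one sees that $\partial F/\partial u$ and $\partial F/\partial v$ vanish at $P$ automatically (since $Q$ vanishes to order $2$ along $l$, so do all its first derivatives), whereas $\partial F/\partial x_k(P)=p_k(u_0,v_0)$ for the same reason. Hence $\{F=0\}$ is singular at some point of $l$ precisely when the binary forms $p_1,\dots,p_n$ of degree $2n-2$ have a common zero on $l\cong P^1$. The set of such $[F]$ is the image of the incidence variety $\{([z],[F])\in P^1\times\Cal H:p_1(z)=\dots=p_n(z)=0\}$ under the projection to $\Cal H$, which is proper since $P^1$ is complete; so this set is Zariski closed and $\X$ is Zariski open. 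It is non-empty (e.g. $F=x_1u^{2n-2}+x_2v^{2n-2}$, using $n\ge2$), hence, as a non-empty Zariski-open subset of the projective space $\Cal H$, it is automatically a smooth irreducible quasi-projective variety.

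For $\X_\R$, I would write $\Cal H=P(V)$ and split $V=V_p\oplus V_Q$, the summands parametrizing the tuples $(p_1,\dots,p_n)$ and the polynomials $Q$. Since the condition defining $\X$ depends only on the $V_p$-component and forces $(p_1,\dots,p_n)\ne0$, the segment joining a given $[F]\in\X_\R$ to the point obtained by scaling $Q$ to $0$ stays inside $\X_\R$; this reduces connectedness of $\X_\R$ to connectedness of $P(V_p)_\R\setminus W_\R$, where $W\subset P(V_p)$ is the (real, defined over $\Q$) locus of $n$-tuples of binary forms of degree $2n-2$ admitting a common complex zero. A fibre-dimension count in the incidence variety $\{([z],[p])\in P^1\times P(V_p):p_i(z)=0\ \forall i\}$ — over a fixed $[z]$ the $n$ conditions are independent linear conditions on $(p_1,\dots,p_n)$ — gives $\codim_\C W\ge n-1$. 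Therefore its real locus $W_\R$, being the set of real points of a complex variety of complex dimension $\le N-(n-1)$, has real dimension $\le N-(n-1)$ inside the connected manifold $P(V_p)_\R\cong\Rp{N}$, i.e. real codimension $\ge n-1$. For $n\ge3$ this codimension is $\ge2$, so by general position any path in $\Rp{N}$ can be perturbed off the stratified set $W_\R$ rel endpoints, $P(V_p)_\R\setminus W_\R$ is connected, and hence so is $\X_\R$. Finally $\X_\R$ is a smooth manifold simply as an open subset of $\Cal H_\R\cong\Rp{\dim V-1}$.

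The singular-locus computation and the openness statement are routine bookkeeping. The step requiring a little care — and the only place where the hypothesis $n\ge3$ is used — is the passage from $\codim_\C W\ge n-1$ to the assertion that $W_\R$ is genuinely thin, of real codimension $\ge2$: here one invokes the standard facts that the real points of a complex variety of complex dimension $m$ form a real algebraic (in particular semialgebraic) set of real dimension $\le m$, and that the complement of a closed semialgebraic set of codimension $\ge2$ in a connected manifold is connected. I expect this to be the main obstacle, though it is standard. For $n=2$ one only obtains $\codim_\C W\ge1$, consistent with the well-known fact that $\X_\R$ is disconnected in the case of cubic surfaces.
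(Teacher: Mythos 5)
Your proposal is correct and follows essentially the same route as the paper: identify $\Cal H\sm\X$ as the locus where the forms $p_1,\dots,p_n$ acquire a common zero, observe that this is Zariski closed of codimension $n-1$, and deduce connectedness of $\X_\R$ for $n\ge 3$ from the fact that the real points of this complement have real codimension at least $2$. The paper compresses all of this into one sentence; your added details (the Jacobian computation, the deformation retraction onto the $(p_1,\dots,p_n)$-component, and the semialgebraic dimension bound for the real locus) are exactly the steps being taken for granted there.
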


\begin{proof} A hypersurface defined by a polynomial $F$ as above is non-singular at each point of $l$ if and only if the polynomials
$p_k(u,v), 1\le k\le n$, have no common zeros. Hence, the complement of $\X$ in $\Cal H$ is a Zarisky closed subset of codimension $n-1$.
\end{proof}

In the projective space
$P(\C_{2n-2}[u,v]\otimes \C^{n})$
of all $n$-tuples $[p_1:\dots : p_n]$
of degree $2n-2$ homogeneous polynomials $p_i=p_i(u,v)$, we consider
a Zariski open subset $\P$ formed by $n$-tuples of polynomials having no common roots.
Under this condition such polynomials define
a parametrized rational curve $C\!: P^1 \to P^{n-1}$, $[u:v]\mapsto [p_1:\dots p_n]$, of degree $2n-2$,
and we can view $\P$ as the space of such curves.

We consider the natural projection $\Cal H\to
%P(Sym^{2n-2}(\C^2)\otimes \C^{n})$
P(\C_{2n-2}[u,v]\otimes \C^{n})$, $F\mapsto[p_1:\dots:p_n]$,
and denote its restriction by $\Jet^1_l\!:\X\to\P$. All these spaces and maps are defined over the reals.
\begin{proposition}\label{discr-comparison}
Both $\Jet^1_l : \X\to\P$ and $(\Jet^1_l)_\R : \X_\R\to\P_\R$ are fibrations
 between smooth varieties with contractible fibers.
\end{proposition}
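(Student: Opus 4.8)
The plan is to unwind the definition of the projection $\Jet^1_l$ on the level of projective coordinates and exhibit it, fiberwise, as a linear projection of projective spaces restricted to a Zariski-open locus. Recall that a hypersurface in $\X$ is given by a polynomial $F$ of the form \eqref{tag}, namely $F=\sum_{k=1}^n x_kp_k(u,v)+Q$, where the $n$-tuple $[p_1:\dots:p_n]$ is the image under $\Jet^1_l$ and $Q$ runs freely over the linear space of degree $2n-1$ polynomials vanishing to order $2$ along $l$ (equivalently, over the monomials in $u,v,x_1,\dots,x_n$ of total degree $2n-1$ that are divisible by some $x_ix_j$). So over a point $[p_1:\dots:p_n]\in\P$, the fiber of $\Jet^1_l$ inside $\Cal H$ is an affine subspace of the projective space $\Cal H$: fixing one representative $(p_1,\dots,p_n)$ and letting $Q$ vary gives an affine chart, and the fiber is exactly the set of $F$'s of that shape up to overall scaling. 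Since every such $F$ automatically lies in $\X$ (non-singularity along $l$ depends only on the $p_k$ having no common root, which holds by assumption on the base point), the fiber of $\Jet^1_l:\X\to\P$ coincides with this whole affine space; in particular it is contractible. The same description, verbatim with real coefficients, applies to $(\Jet^1_l)_\R:\X_\R\to\P_\R$.

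Next I would upgrade "each fiber is an affine space" to "the map is a fibration". The cleanest route is to observe that $\Cal H\to P(\C_{2n-2}[u,v]\otimes\C^n)$ is literally the projectivization of a linear projection $\C^{N}\to\C^{M}$ of vector spaces (forgetting the $Q$-coordinates), hence on the complement of the center of projection it is a Zariski-locally-trivial affine-space bundle; restricting to the Zariski-open sub-base $\P$ (no common root) and the corresponding Zariski-open total space $\X$ — which, by Lemma \ref{connected}, is precisely the preimage of $\P$ intersected with the locus where the projection is defined — keeps local triviality. Concretely, over a standard affine chart $\{p_{k_0}\ne0\}$ of $\P$ one trivializes by sending $([p_1:\dots:p_n],Q)$ to the tuple $(p_i/p_{k_0})_{i\ne k_0}$ together with the $Q$-coefficients rescaled by $p_{k_0}$; this is an algebraic isomorphism onto (chart of $\P$)$\times$(affine space), and it is defined over $\R$, giving simultaneously the trivialization of $(\Jet^1_l)_\R$. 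Local triviality with affine-space fibers in particular makes both maps Serre fibrations with contractible fibers, which is the assertion. Smoothness of $\X$, $\X_\R$, $\P$, $\P_\R$ has already been recorded (Lemma \ref{connected} and the definition of $\P$ as a Zariski-open subset of a projective space).

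The only genuine point requiring care — and the step I expect to be the main obstacle, though it is more bookkeeping than depth — is checking that the open locus on which the linear projection $\Cal H\dashrightarrow P(\C_{2n-2}[u,v]\otimes\C^n)$ is regular, intersected with the preimage of $\P$, is exactly $\X$, so that no points of $\X$ are lost and the fibers are the full affine spaces rather than punctured ones. This comes down to the elementary remark that the center of the projection consists of those $F$ with all $p_k\equiv 0$, i.e. $F=Q$, which are singular along all of $l$ and hence already excluded from $\X$; combined with the criterion in the proof of Lemma \ref{connected} (non-singularity along $l$ $\iff$ the $p_k$ share no root), one gets $\X=(\Jet^1_l)^{-1}(\P)$ set-theoretically and the projection is regular there. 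Once this identification is in place, everything above is a routine application of local triviality of linear projections away from their center, and the real statement follows by the same formulas with real coefficients, using the real Birkhoff–Grothendieck setup only implicitly through the fact that all the spaces and maps in sight are defined over $\R$.
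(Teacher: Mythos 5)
Your proposal is correct and takes essentially the same approach as the paper, whose entire proof is the observation that the polynomials $Q$ in (\ref{tag}) form a vector space (so the fibers are contractible affine spaces) and that smoothness follows from openness in smooth varieties. Your additional bookkeeping on local triviality over affine charts and on the center of the linear projection is a legitimate elaboration of that same argument rather than a different route.
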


\begin{proof}
Polynomials $Q$ involved in (\ref{tag}) form a vector space, which yields the contractibility.
Smoothness of $\X, \X_\R$ and $\P, \P_\R$ follows from their openness in smooth varieties.
\end{proof}

\subsection{The discriminants}\label{discriminants}
Next, we present the components $p_k, 1\le k\le n$, of $C\in\P$ in the form
$$p_k(u,v)=a_{0,k}u^{2n-2}+\dots+a_{2n-2,k}v^{2n-2}
$$
and consider the $2n\times2n$ matrix
$$A_C=\begin{pmatrix}
a_{0,1}&0&a_{0,2}&0&\dots&a_{0,n}&0\\
a_{1,1}&a_{0,1}&a_{1,2}&a_{0,2}&\dots&a_{1,n}&a_{0,n}\\
\dots\\
a_{2n-2,1}&a_{2n-3,1}&a_{2n-2,2}&a_{2n-3,2}&\dots&a_{2n-2,n}&a_{2n-3,n}\\
0&a_{2n-2,1}&0&a_{2n-2,2}&\dots&0&a_{2n-2,n}
\end{pmatrix}.$$
Denote by $\DP$ the subvariety
of $\P$ defined by equation $\det A_C=0$
and let $\DX=(\Jet^1_l)^{-1}(\DP)$ be the corresponding subvariety of $\X$.

\begin{proposition}\label{E=sign} Set theoretically,
$\Delta^{\Cal X}$
is formed by those $X\in \X$ for which the normal bundle $N_{l,X}$
of $l$ in $X$ is not balanced.
For each $X\in\XR\sm\DXR$,
the Euler index $\ind(l,X)$ is equal to $\sgn(\det A_C)=\pm1$.
%\qed
\end{proposition}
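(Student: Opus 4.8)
The plan is to split the statement into its two assertions and handle them by separate but compatible linear-algebra computations attached to the presentation (\ref{tag}). First I would make explicit the relation between the matrix $A_C$ and the normal bundle $N_{l,X}$. The tangent directions to $X$ transverse to $l$ along $l$ are governed, to first order, precisely by the $n$ polynomials $p_k(u,v)$ of degree $2n-2$: the linear system they span, viewed as a map $\Cal O_l(-1)\otimes\C^n \to \Cal O_l(2n-2)$ (equivalently, the cokernel data), is exactly the $(2n-2)$-th jet data that determines the splitting type of $N_{l,X}$. Concretely, $N_{l,X}$ is the kernel of the bundle map $\bigoplus_{k=1}^n \Cal O_l(1) \xrightarrow{(p_1,\dots,p_n)} \Cal O_l(2n-1)$ twisted appropriately, so that the splitting type is controlled by the ranks of the multiplication-by-$p_k$ maps between spaces of forms. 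The columns of $A_C$ are visibly the coefficient vectors of $u\cdot p_k$ and $v\cdot p_k$ (this is the classical resultant-type "Sylvester" shape), so $A_C$ is the matrix of the map $\C^n\otimes \C_1[u,v]\to \C_{2n-1}[u,v]$, $(\lambda_k, \mu_k)_k \mapsto \sum_k (\lambda_k u + \mu_k v)p_k$. Since both source and target have dimension $2n$, this map is an isomorphism exactly when $N_{l,X}$ has the generic (balanced) splitting type $(-1,\dots,-1)$, and drops rank precisely on the locus where some $m_i\le -2$; this gives the set-theoretic description of $\Delta^{\Cal X}$ as the preimage of $\{\det A_C=0\}$, using that the $\Jet^1_l$ coordinates determine $N_{l,X}$ up to isomorphism and that the discriminant condition is $\Jet^1_l$-invariant (as recorded in the excerpt's setup around Proposition \ref{discr-comparison}).

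For the second assertion, the index $\ind(l,X)$ is by definition the local Euler number at the zero $l$ of the section $s_F$ of $\operatorname{Sym}^{2n-1}(\tau^*_{2,n+2})$ cut out by $F$. Over $\R$ this local number is the sign of the Jacobian determinant of $s_F$ at $l$ in any real local trivialization and any real local chart on $G_\R(2,n+2)$ — the sign being intrinsic because the orientation ambiguities of trivialization and of the $2n$-dimensional tangent space cancel in pairs (this is the standard fact making $\ind$ well-defined and $X$-independent, already invoked in the Introduction). The point is then to choose coordinates adapted to (\ref{tag}): the chart $u,v,x_1,\dots,x_n$ gives affine coordinates on the Grassmannian near $l$ whose $2n$ coordinate functions are the "slopes" of the moving 2-plane, and in these coordinates the linearization of $s_F$ at $l$ is, by the very definition of the $p_k$ as the linear-in-$x$ part of $F$, the linear map whose matrix in the monomial bases is exactly $A_C$. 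Hence the Jacobian of $s_F$ at $l$ equals $\det A_C$ up to a positive factor independent of $F$, and $\ind(l,X)=\sgn(\det A_C)$ on $\XR\sm\DXR$, where $\det A_C\neq 0$ so the sign is $\pm1$.

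The step I expect to be the main obstacle is pinning down the last sentence — that the linearization of $s_F$ at $l$ is literally (a positive rescaling of) $A_C$, with no sign twists hidden in the choices of bases for $\operatorname{Sym}^{2n-1}(\tau^*)$ over $\R$, in the identification of the tangent space $T_lG_\R(2,n+2)$ with $\Hom(l,\R^{n})$ via the chart, and in the ordering of the $2n$ monomials. I would make this precise by writing $F$ in the trivialization of $\operatorname{Sym}^{2n-1}(\tau^*_{2,n+2})$ obtained by restricting forms to the moving line $\{x_i = \sum_j t_{ij}(\text{coords on }l)\}$, expanding to first order in the $t_{ij}$, and reading off that the coefficient of $t_{ij}$ in the restricted form is exactly the appropriate coefficient of (a monomial times) $p_i$; the resulting $2n\times 2n$ coefficient matrix is $A_C$ by inspection of the Sylvester pattern. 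A clean way to dispatch the sign bookkeeping is to note that the claimed formula $\ind(l,X)=\sgn\det A_C$ is already known in a reference case — e.g.\ any balanced real $X$, or the Fermat-type example used later in the paper — so it suffices to check that both sides are locally constant on $\XR\sm\DXR$ (the left side because $\ind$ is a local Euler index at a nondegenerate zero, the right side because $\det A_C$ is continuous and nonvanishing there) and that $\XR\sm\DXR$ meets every relevant component, after which equality on one point of each component forces equality everywhere; connectivity statements like Lemma \ref{connected} keep this last count under control.
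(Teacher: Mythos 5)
Your proposal follows essentially the same route as the paper: identify $N_{l,X}$ with the kernel of the multiplication map $\Cal O_l(1)^{\oplus n}\to\Cal O_l(2n-1)$ given by $(p_1,\dots,p_n)$, so that balancedness is equivalent to $\det A_C\ne 0$, and observe that $A_C$ is exactly the Jacobi matrix of $s_F$ in the standard chart at $[l]$, whence $\ind(l,X)=\sgn(\det A_C)$ (this is the content of Lemma~\ref{fundamental-matrix-conditions}, from which the paper deduces the proposition). The only blemish is the parenthetical twist $\Cal O_l(-1)\otimes\C^n\to\Cal O_l(2n-2)$, whose degrees do not match multiplication by the degree-$(2n-2)$ forms $p_k$; your subsequent, correct description supersedes it.
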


Recall that $\ind(l,X)$
is the local Euler index
at $l\in G_\R(2,n+2)$
of the section $s_F$ of $\Sym^{2n-1}(\tau_{2,n+2}^*)$ determined by $F$.
The Proposition \ref{E=sign} then follows immediately from the following Lemma.

\begin{lemma}\label{fundamental-matrix-conditions}
For $X\in \X$ and its defining polynomial $F$, the following hold:
\begin{enumerate}
\item The matrix $A_C$ is the Jacobi matrix of $s_F$ at the point $l\in Gr(2, n+2)$.
\item The normal bundle $N_{l,X}$ of $l$ in $X$ is balanced if and only if $\det A_C\ne0$.
\item The determinant $\det A_C$ vanishes if and only if there exists a non-zero $n$-tuple of
linear polynomials $L_i=L_i(u,v)$, $i=1,\dots,n$, such that the dot-product
 $p\cdot L=p_1L_1+\dots+p_nL_n$ vanishes as a polynomial.
\end{enumerate}
\end{lemma}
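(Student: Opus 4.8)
The three claims are essentially three packagings of the same linear-algebra statement about the pairing
$(L_1,\dots,L_n)\mapsto \sum_k p_k L_k$ sending $n$-tuples of linear forms to forms of degree $2n-1$, so I would prove (3) first as the concrete core and deduce (1) and (2) from it. For (3), observe that the source space $\C_1[u,v]^{\,n}$ has dimension $2n$ and the target $\C_{2n-1}[u,v]$ has dimension $2n$ as well, so the pairing is given by a square matrix in the monomial bases; writing $L_i = \lambda_i u + \mu_i v$ and expanding $\sum_k (\lambda_k u + \mu_k v)p_k(u,v)$, the coefficient of $u^{2n-1-j}v^{j}$ is exactly the $j$-th entry of the product of $A_C$ with the coefficient vector $(\lambda_1,\mu_1,\dots,\lambda_n,\mu_n)^{\mathsf T}$ — the two columns attached to $p_k$ encode multiplication by $u$ and by $v$ respectively, which is why each $p_k$ contributes the pair of shifted columns displayed in the definition of $A_C$. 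Hence $\det A_C = 0$ iff this linear map has a nontrivial kernel iff there is a nonzero tuple $(L_i)$ with $p\cdot L \equiv 0$. This is a direct computation with no real obstacle.

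For (1), I would unwind the definition of the section $s_F$ of $\operatorname{Sym}^{2n-1}(\tau^*_{2,n+2})$ near the point $l\in G_\R(2,n+2)$. Using the affine chart of the Grassmannian centered at $l$ — planes that are graphs of linear maps from the $(u,v)$-plane to the complementary $x$-coordinates, i.e. with tangent data $x_k \leftrightarrow L_k(u,v)$ — the section $s_F$ is obtained by restricting $F$ to the corresponding family of lines and reading off the degree-$(2n-1)$ form in $(u,v)$. At $l$ itself the value is $F|_l$, which vanishes since $l\subset X$; the first-order term in the chart coordinates is precisely $\sum_k \partial F/\partial x_k|_l \cdot L_k = \sum_k p_k(u,v)L_k(u,v)$ because of the normal form \eqref{tag} (the $Q$-part vanishes to order $2$ along $l$ and contributes nothing to the first jet). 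Identifying the fiber $\operatorname{Sym}^{2n-1}(\tau^*_{2,n+2})_l$ with $\C_{2n-1}[u,v]$ in the monomial basis, the differential $d s_F$ at $l$ is exactly the linear map of (3), whose matrix in these bases is $A_C$. So (1) is really just the chain-rule computation identifying the linearization of "restrict $F$ to a moving line" with the pairing above, and the matrix bookkeeping is the one already done in the proof of (3).

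Claim (2) then follows by combining (1) with the classical interpretation of balancedness via the versal deformation, as recalled in Section \ref{def-W}: the normal bundle $N_{l,X}$ is balanced iff $N_{l,X}\cong\mathcal O_l(-1)^{\oplus(n-1)}$, iff the splitting type is rigid, iff the zero $l$ of $s_F$ is nondegenerate — which by (1) means $\det A_C\neq 0$. More precisely, $N_{l,X}$ is the cokernel of the Jacobian of the first jet, the vanishing order of the $p_k$'s along common roots governs how far $N_{l,X}$ departs from the balanced type, and $\det A_C \ne 0$ is exactly the condition that the $p_k$ have no common root \emph{and} that the induced resultant-type determinant does not degenerate; I would either cite the balancedness criterion of \cite{abundance} directly or give the short argument that $H^0(N_{l,X}(-1)) = 0$ — equivalently no nonzero section of $N_{l,X}$ of nonpositive degree — translates, via the Euler sequence identification of $N_{l,X}$, into the absence of a nonzero $(L_i)$ with $p\cdot L\equiv 0$, i.e. into $\det A_C\ne0$ by (3). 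The only genuinely delicate point is this last dictionary between "$N_{l,X}$ balanced" and "the first-jet pairing is nondegenerate"; everything else is the monomial-basis computation of $A_C$, which I expect to be routine.
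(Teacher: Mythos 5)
Your overall route is the same as the paper's: part (3) is the same monomial--basis computation identifying $A_C$ with the matrix of $(L_1,\dots,L_n)\mapsto\sum_k p_kL_k$ from $\C_1[u,v]^{\oplus n}$ to $\C_{2n-1}[u,v]$; part (1) is the same straightforward Jacobian computation in a standard affine chart of the Grassmannian centered at $l$; and part (2) rests, as in the paper, on identifying $N_{l,X}$ with the sheaf map $\mathcal O_l(1)^{\oplus n}\to\mathcal O_l(2n-1)$ given by $(p_1,\dots,p_n)$. Two slips in your part (2) need fixing. First, $N_{l,X}$ is the \emph{kernel} of that map (restrict the normal bundle sequence of $l\subset P^{n+1}$ to the hypersurface), not a cokernel, and the Euler sequence is not really the relevant device. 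Second, the cohomological criterion should be $H^0(N_{l,X})=0$, not $H^0(N_{l,X}(-1))=0$: writing $N_{l,X}\cong\oplus_{i}\mathcal O_l(m_i)$ with $n-1$ summands and $\sum m_i=-(n-1)$, balancedness (all $m_i=-1$) is equivalent to all $m_i<0$, i.e.\ to $H^0(N_{l,X})=0$, whereas $H^0(N_{l,X}(-1))=0$ only forces all $m_i\le 0$ and is satisfied by the unbalanced type $\mathcal O_l\oplus\mathcal O_l(-2)\oplus\mathcal O_l(-1)^{\oplus(n-3)}$. With the correct twist the argument closes at once: taking global sections of $0\to N_{l,X}\to\mathcal O_l(1)^{\oplus n}\to\mathcal O_l(2n-1)\to 0$ identifies $H^0(N_{l,X})$ with $\ker A_C$ (source and target of the middle map both have dimension $2n$), so balanced $\Leftrightarrow\det A_C\neq0$, in agreement with (3). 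Your parenthetical claim that $\det A_C\neq0$ amounts to ``no common root of the $p_k$ plus a resultant-type nondegeneracy'' is not needed and is misleading for $n\ge3$; I would drop it.
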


\begin{proof}{(cf. \cite{galois})} Straightforward calculation of the Jacobi matrix in standard local coordinates on $Gr(2,n+2)$ at the point $[l]$
gives the first statement.
The second statement follows from identification of $N_{l,X}$ as a sheaf
with the kernel of the map $O(1)^{\oplus n}\to O(2n-1)$ given by matrix product with the vector $(p_1,\dots, p_n)$.
Combining of similar terms expresses vanishing of
$p_1L_1+\dots+p_nL_n$ as
non-triviality of the kernel of the map $w\mapsto A_Cw$, $w\in\C^{2n}$,
which yields part (3).
\end{proof}

Lemma \ref{fundamental-matrix-conditions} implies also the following result.

\begin{proposition}\label{D-reduced}
If $n\ge 3$, then the discriminants $\DP\subset\P$ and
$\DX\subset\X$ are non-empty, reduced and irreducible
hypersurfaces.
For $n=2$, they are empty.
\end{proposition}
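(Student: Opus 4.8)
Here is the approach I would take. The plan is to realize $\DP$ as the closure of the image of an incidence variety that fibers, with linear fibres, over a projective space of linear syzygies, read off irreducibility and reducedness there by a dimension count, and then transport everything to $\DX$ through the fibration $\Jet^1_l$. The case $n=2$ is disposed of at once: if $L_1p_1+L_2p_2=0$ with $\deg L_i=1$, $\deg p_i=2$ and $\gcd(p_1,p_2)=1$, then $p_1\mid L_2$, forcing $L_2=0$ and then $L_1=0$; by Lemma \ref{fundamental-matrix-conditions}(3) this means $\det A_C\ne0$ for every $C$, so $\DP=\varnothing$ and hence $\DX=(\Jet^1_l)^{-1}(\DP)=\varnothing$.

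For $n\ge 3$, set $W=\C_1[u,v]^{\oplus n}\cong\C^{2n}$ and consider the incidence variety
\[
I=\{([p],[L])\in\P\times P(W)\ :\ p_1L_1+\dots+p_nL_n=0\},
\]
whose first projection $\pi_1\colon I\to\P$ has image exactly $\DP$ by Lemma \ref{fundamental-matrix-conditions}(3). Let $P(W)_0\subset P(W)$ be the open locus where $\gcd(L_1,\dots,L_n)=1$; there the map $\mu_L\colon\C_{2n-2}[u,v]^{\oplus n}\to\C_{2n-1}[u,v]$, $(p_i)\mapsto\sum p_iL_i$, is surjective (two coprime linear forms already generate $\C_{2n-1}[u,v]$ over $\C_{2n-2}[u,v]$) of constant rank $2n$, so the spaces $P(\ker\mu_L)$ form a projective subbundle over $P(W)_0$ whose total space $\hat I_0$ is smooth and irreducible of dimension $(2n-1)+\bigl(n(2n-1)-2n-1\bigr)=2n^2-n-2$. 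Discarding from $\hat I_0$ the closed locus where $p_1,\dots,p_n$ acquire a common zero yields $I_0=I\cap(\P\times P(W)_0)$, which is non-empty precisely because $n\ge3$: take $[L]=[u:v:0:\dots:0]$ and $[p]=[vr:-ur:p_3:\dots:p_n]$ with $r$ of degree $2n-3$ and $p_3,\dots,p_n$ chosen so that $p_1,\dots,p_n$ have no common zero (the third polynomial is essential here). Thus $I_0$ is a dense open, hence smooth and irreducible of dimension $2n^2-n-2$, in $\hat I_0$.

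Since $\DP$ is cut out in the irreducible smooth $(2n^2-n-1)$-dimensional variety $\P$ by the single equation $\det A_C=0$, is non-empty by the previous step, and is proper because a generic $C$ is balanced (its kernel bundle being $\simeq\mathcal O(-2)^{\oplus(n-1)}$, equivalently $\det A_C\not\equiv0$ by Lemma \ref{fundamental-matrix-conditions}(2)), it is pure of codimension $1$ and Cohen--Macaulay, in particular it satisfies Serre's condition $(S_1)$. For irreducibility it remains to bound $\pi_1(I\setminus I_0)$: a point there has $\gcd(L_i)=\ell$ a linear form, so $L_i=c_i\ell$ with $\sum c_ip_i=0$, i.e.\ $[p]$ lies in the locus $D'\subset\P$ of curves contained in a hyperplane of $P^{n-1}$; counting with the projection to the $[c]$'s gives $\dim D'\le (n-1)+\bigl((n-1)(2n-1)-1\bigr)=2n^2-2n-1<2n^2-n-2$ for $n\ge3$. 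Hence $\DP=\overline{\pi_1(I_0)}\cup\overline{\pi_1(I\setminus I_0)}$ with the second piece of dimension strictly below that of every component of $\DP$, so $\DP=\overline{\pi_1(I_0)}$ is irreducible. For reducedness, the morphism $\pi_1|_{I_0}\colon I_0\to\DP$ is dominant from a smooth irreducible variety onto $\DP$ of the same dimension, and it is birational — over a general $C\in\DP$ the matrix $A_C$ has corank exactly $1$, so the syzygy $[L]$ is unique and automatically has $\gcd(L_i)=1$; this gives a dense reduced open in $\DP$, and $(R_0)+(S_1)$ via Serre's criterion makes $\DP$ reduced. (Equivalently, reducedness can be gotten by checking that $d(\det A_C)$ is non-zero at the explicit $C$ above.)

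Finally, $\DX=(\Jet^1_l)^{-1}(\DP)$, where $\Jet^1_l\colon\X\to\P$ is a surjective smooth morphism with irreducible (affine-space) fibres by Proposition \ref{discr-comparison}. Pullback along a smooth morphism preserves reducedness and codimension, and the preimage of an irreducible subvariety under an open map with irreducible fibres is irreducible; hence $\DX$ is a non-empty reduced irreducible hypersurface in $\X$. I expect the main obstacle to be two linked technical points: the dimension bookkeeping confining the degenerate-curve locus $D'$ to strictly smaller dimension, and the verification that $A_C$ has corank exactly $1$ at a generic (or one explicit) point of $\DP$ — equivalently, that $\det A_C$ has non-vanishing differential there — which is what secures reducedness rather than mere irreducibility.
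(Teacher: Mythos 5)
Most of your argument is sound and runs parallel to the paper's: your incidence variety $I$ over the space of syzygies $[L]$ is essentially the paper's parameterization in different clothing (the paper reduces the syzygy to $p_1'L_1'+p_2'L_2'=0$, extracts the common factor $r$ of degree $2n-3$, and maps $PGL(n)\times\{(l_1,l_2,r,q_3,\dots,q_n)\}$ dominantly onto $\DP$ modulo a codimension $\ge 2$ locus; your $I_0\to\DP$ with the small exceptional set $\pi_1(I\setminus I_0)\subset\DPP$ achieves the same), and the non-emptiness, purity, $n=2$, and $\DX$ steps are all correct.

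The genuine gap is in reducedness. Here $\DP$ is by definition the scheme $V(\det A_C)$, so ``reduced'' means the polynomial $\det A_C$ is square-free. Birationality of $\pi_1|_{I_0}\colon I_0\to\DP$ is a purely set-theoretic statement about the reduced structure and is compatible with $\det A_C=g^2$ for an irreducible $g$; it does not ``give a dense reduced open in $\DP$'', so $(R_0)$ is not established and Serre's criterion has nothing to bite on. Your fallback is also not secured: corank $A_C=1$ and $d(\det A_C)\neq 0$ are \emph{not} formally equivalent here, because the entries of $A_C$ are not independent variables (each coefficient $a_{k,i}$ occupies two positions in this Sylvester-type matrix), so vanishing of the differential is the vanishing of certain \emph{sums} of cofactors, which could a priori occur even when some cofactor is non-zero. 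The implication you need (corank $1$ plus $\gcd(L_i)=1$ forces $d(\det A_C)\neq0$) is in fact true, via $d\det(A_C)(C')=\operatorname{tr}(\operatorname{adj}(A_C)A_{C'})$ together with the surjectivity of $C'\mapsto\sum p_i'L_i$ when the $L_i$ are coprime, but that derivation (or the explicit check of $d(\det A_C)\neq0$ at your sample point) is exactly the missing step, and you have only flagged it as ``the main obstacle'' rather than closed it. The paper avoids this entirely by a direct combinatorial argument that $\det A_C$ is square-free as a polynomial in the $a_{k,i}$: each variable occurs in every monomial with degree $\le 2$, so $\det A_C=F^2G$ with $G$ reduced, and symmetry under permutations of the $p_i$ plus a ``neighboring coefficients'' argument forces $F$ to be constant (the option $G$ constant being excluded because $\det A_C$ changes sign on $\X_\R$). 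Either supply the adjugate computation, or the sign-change/square-freeness argument, to complete your proof. (Also, a small slip: the balanced kernel of $O(1)^{\oplus n}\to O(2n-1)$ is $\mathcal O(-1)^{\oplus(n-1)}$, not $\mathcal O(-2)^{\oplus(n-1)}$.)
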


\proof
If $n=2$, then $\det A_C$ is the resultant of $p_1, p_2$, while the case of pairs $p_1,p_2$ having a common zero is excluded
by definition of $\P$ and $\Cal X$.
From now on, we assume that $n\ge 3$.

To prove that $\DP\subset P$ and
$\DX\subset\X$ have no multiple components we show that
$\det A_C$ considered as a polynomial in variables $a_{i,j}$ is reduced.
Since every variable $a_{k,i}$ enters in each of the monomials of $\det A_C$ in degree $\le 2$,
we can present $\det A_C$ as a product $F^2G$, where the polynomial $G$ is reduced.
%Note that:
Furthermore, note the following.

(1) Since $\det A_C$ is symmetric
with respect to permutations, $\sigma$, of polynomials $p_i$,
each of $F$ and $G$
is either alternating or symmetric with respect to the
induced simultaneous permutations of variables, $a_{k,i}\mapsto a_{k,\sigma(i)}$, $0\le k\le 2n-2$, $1\le i\le n$.
In particular, if $F$ or $G$ contains a monomial with $a_{k,i}$ (in some power)
%for some $i$,
then it contains monomials with $a_{k,j}$ (in the same power) for all values $1\le j\le n$.
%for any $j$.

(2) A variable $a_{k,i}$
enters into $F$ if and only if it does not enter in $G$. This is because it enters into $\det A_C$
at most quadratically.

(3) If $a_{k,i}$ enters into $F$, then its both ``neighbors'' $a_{k\pm1,i}$
does not enter in $G$.
This is because $a_{k,i}^2$ will appear in $F^2$, and if, say, $a_{k+1,i}$ enters in $G$,
then $\det A_C$ would contain a term with $a_{k,i}^2a_{k+1,i}$ which is impossible, because all three factors come from the same pair of rows.

(1) - (3) imply that either $F$ or $G$ is constant. If $F$ is constant, then $\det A_C$ is reduced.
The second option is impossible, because $\det A_C$ restricted to $\X_\R$
is not of constant sign as it follows from
examples like those
with $p_k, 1\le i\le n-1,$ of the form
$u^{2n-2}+au^{2n-1}v, u^{2n-1}v + u^{2n-2}v^2, v^4, \dots, v^{2n-2}$.

The same examples show that $\det A_C$ is not identically zero,
and we conclude that $\DP\subset\P$ is a non empty reduced hypersurface as well as
$\DX\subset \X$.
%\mnote{Kh: replaced by "and we conclude that $\DX\subset \X$ is a non empty reduced hypersurface"}

To prove irreducibility of $\DP$,
we consider $C=[p_1:\dots : p_n]\in\DP$ and use Lemma \ref{fundamental-matrix-conditions}(3)
to find a non-zero $n$-tuple $L_1,\dots,L_n$ of linear polynomials such that
$p_1L_1+\dots+p_nL_n=0$.
Using that the linear system spanned by $L_i$ has rank $\le2$, we transform the latter identity
by an appropriate change of coordinates $x_1,\dots,x_n$ into
%vanishing of
$p_1'L_1'+p_2'L_2'=0$, where $p_1'$ and $p_2'$ are the corresponding linear combinations of $p_1,\dots,p_n$. Therefore, we consider
in the affine space of $(n+1)$-tuples of homogeneous polynomials $l_1(u,v), l_2(u,v), r(u,v), q_3(u,v), \dots, q_n(u,v)$ of degrees $1,1,2n-3,
2n-2,\dots, 2n-2$ respectively a Zariski open subset $W$
formed by $(n+1)$-tuples whose component-polynomials have no common roots.
The map
$$
\phi : PGL(n)\times W\to \DP,\quad (M, (l_1,l_2,r, q_3, \dots, q_n))\mapsto (l_1r,l_2r,q_3,\dots, q_n)M
$$
%is a morphism with
has an irreducible domain and
%, which is
is a morphism dominant
up to a codimension $\ge 2$ subvariety of
%$\DPP\subset
$\DP$ formed by $[p_1:\dots :p_n]$ with linear dependent components.
This implies
%is sufficient for showing
irreducibility $\DP$, since $\DP$ is a hypersurface in a nonsingular variety, $\X$, and hence pure dimensional.

Irreducibility of $\DX$ follows from that of $\DP$ by Lemma \ref{discr-comparison}.
%$q_{11}, \dots q_{ij}, \dots q_{nn}$.
\endproof

\subsection{Wall-crossing}
\begin{lemma}\label{continuity-W}
Functions $\ind$ and $W$ are continuous {\rm (}locally constant{\rm )}
on $\XR\sm\DXR$.
\end{lemma}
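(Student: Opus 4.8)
The plan is to prove both functions are locally constant by exhibiting each as a locally constant function in a fairly direct way, leveraging the already-established smoothness and the fibration structure.

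For $\ind$, the argument is essentially formal. By Proposition \ref{E=sign} (via Lemma \ref{fundamental-matrix-conditions}(1)), on $\XR\sm\DXR$ we have $\ind(l,X)=\sgn(\det A_C)$, where $C=\Jet^1_l(X)$. The entries $a_{i,j}$ of $A_C$ depend polynomially, hence continuously, on $X\in\XR$, so $\det A_C$ is a continuous real-valued function on $\XR$ which, by definition of $\DXR=(\Jet^1_l)^{-1}(\DPR)$, is nonvanishing precisely on $\XR\sm\DXR$. A continuous nowhere-zero real function has locally constant sign; this gives local constancy of $\ind$ on $\XR\sm\DXR$.

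For $W$, the point is that the Welschinger weight was \emph{defined} as a deformation-stable quantity: by the real Birkhoff--Grothendieck theorem recalled in Subsection \ref{Wweights}, a balanced real normal bundle $N_{l,X}$ has a real splitting that extends over any real deformation, and on $\XR\sm\DXR$ the bundle $N_{l,X}$ is balanced (again Proposition \ref{E=sign} together with Lemma \ref{fundamental-matrix-conditions}(2)). Concretely, fix $X_0\in\XR\sm\DXR$; first I would choose a real splitting of $N_{l,X_0}$ and extend it to a real splitting of $N_{l,X}$ for all $X$ in a small neighborhood $U$ of $X_0$ in $\XR\sm\DXR$, with the splitting depending continuously on $X$. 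This produces, for each $X\in U$, the framing $\nu_i(t)$ and then the loop $s=s_X(t)$ in the space of projectively non-degenerate $(n-1)$-tuples in $P^{n-1}_\R$, varying continuously with $X$; the lift to $S^{n-1}$, the Gram--Schmidt orthogonalization, and the completion to a loop $\til s_X$ in $\SO_n$ are all canonical or at worst defined up to the ambiguity already handled in Proposition \ref{welldefined}. Hence $X\mapsto[\til s_X]\in\pi_1(\SO_n)$ is constant on $U$ (a continuous family of loops has constant homotopy class), so $W(l,X)=(-1)^{[\til s_X]}$ is locally constant.

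The main obstacle is the continuity of the extended real splitting: one must know that a real Birkhoff--Grothendieck splitting of a balanced real bundle not only extends over a single deformation (as stated in Subsection \ref{Wweights}) but can be chosen to depend continuously on the parameter $X$ in a whole neighborhood. I would handle this by presenting $N_{l,X}$, as in Lemma \ref{fundamental-matrix-conditions}, as the kernel of the explicit map $\Cal O(1)^{\oplus n}\to\Cal O(2n-1)$ given by the vector $(p_1,\dots,p_n)$ with $[p_1:\dots:p_n]=\Jet^1_l(X)$; since the $p_i$ vary polynomially and the balanced locus is open, the sought splitting can be obtained from solving a linear system whose coefficients vary continuously and whose solvability is exactly the balancedness condition, so the solution can be chosen continuously on a neighborhood. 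This reduces the step to elementary linear algebra over the continuously varying matrix $A_C$ on $\XR\sm\DXR$.
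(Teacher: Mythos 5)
Your proposal is correct and follows essentially the same route as the paper: the paper likewise derives local constancy of $\ind$ from Proposition \ref{E=sign} (i.e.\ from $\ind=\sgn(\det A_C)$ with $\det A_C$ continuous and nonvanishing off $\DXR$), and local constancy of $W$ from the stability of real balanced splittings under small real deformations. Your extra discussion of why the real splitting can be chosen continuously in a neighborhood is a careful elaboration of the same stability fact the paper invokes, not a different argument.
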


\begin{proof} For $\ind$, it follows from  Proposition \ref{E=sign}.
For $W$, it follows from stability of real balanced vector bundles under small real deformations and preserving the non-singularity of $X$
along $l$ under variations of $X$ in $\X$.
\end{proof}

For $n\ge 3$,
the hypersurface $\DX$ has a natural stratification in terms of splitting types of $N_l$. In this paper, we restrict our attention to the main, open,
stratum $\DX_0$ that corresponds to the splitting type $O_l\oplus O_l(-2)\oplus O_l(-1)^{\oplus (n-3)}$.
%\mnote{Kh: probably, there is no need not make a Lemma from it? Previously written Lemma main-D-stratum on this issue is closed}

As is known (and straightforward to check; see, for example, \cite{galois}), this stratum is a non-empty open Zariski subset of $\DX$ and all the other strata (formed by the more deep
splitting types, that is the types different from $O_l\oplus O_l(-2)\oplus O_l(-1)^{\oplus (n-3)}$ and $O_l(-1)^{\oplus (n-1)}$) form a closed codimension 2
subvariety of $\DX$.

In the real setting, we mean
by  {\it walls} in $\XR$ the top-dimensional connected components of $(\DX_{0})_\R$, and by {\it chambers} the connected components of $\XR\sm\DXR$.

\begin{proposition}\label{wall-crossing-of-I-and-W}
For any $n\ge 3$, each of the functions $\ind$ and $W$
 takes opposite values on the opposite sides of each wall of the space $\XR$
{\rm (}that is, they alternate their values as long as a path $X_t\in\XR$ crosses transversally a wall{\rm ).}
\end{proposition}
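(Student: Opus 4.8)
The plan is to establish the wall-crossing behaviour separately for $\ind$ and for $W$, using that both are locally constant on $\XR\sm\DXR$ (Lemma \ref{continuity-W}) so it suffices to analyze a single transversal crossing of a wall, i.e. a generic one-parameter real family $X_t$, $t\in(-\varepsilon,\varepsilon)$, with $X_0$ lying on the open stratum $(\DX_0)_\R$ and $X_t\notin\DXR$ for $t\ne 0$. By Proposition \ref{discr-comparison} it is harmless to work downstairs in $\P$ instead of $\X$, replacing $X_t$ by the curve $C_t=\Jet^1_l(X_t)$ and the wall by $(\DP_0)_\R$; moreover, since both invariants are invariant under the block-triangular automorphisms of $N_l$ and under real changes of coordinates, I may normalize $C_0$ to a convenient standard form for the splitting type $O_l\oplus O_l(-2)\oplus O_l(-1)^{\oplus(n-3)}$.

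For $\ind$, the argument is short: by Proposition \ref{E=sign} we have $\ind(l,X_t)=\sgn\det A_{C_t}$ for $t\ne 0$, and by Lemma \ref{fundamental-matrix-conditions}(1) $A_{C_t}$ is the Jacobi matrix of the section $s_{F_t}$. The statement that $X_0$ lies on the \emph{open} stratum $\DX_0$ means precisely that the corank of $A_{C_0}$ is $1$; that the crossing is transversal means that $\frac{d}{dt}\det A_{C_t}\big|_{t=0}\ne 0$. Hence $\det A_{C_t}$ has a simple zero at $t=0$ and therefore changes sign as $t$ passes through $0$, giving $\ind(l,X_{-\varepsilon})=-\ind(l,X_{\varepsilon})$. (Concretely one checks, using the normal form above, that the top-dimensional part of $(\DP_0)_\R$ is the smooth locus of $\{\det A_C=0\}$, which is consistent with $\DP$ being reduced and irreducible by Proposition \ref{D-reduced}, so transversal crossings are generic.)

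For $W$ the point is that as $t$ crosses $0$ the splitting type of $N_{l,X_t}$ stays balanced for $t\ne0$ but the \emph{real} splitting jumps. I would trace this through the construction of Section \ref{Wweights}: a real balanced splitting $N_{l,X_t}=\bigoplus_i \Cal O_l(-1)$ can be extended continuously on each side of the wall, and one compares the two one-sided limits at $t=0$. The degeneration to $O_l\oplus O_l(-2)\oplus O_l(-1)^{\oplus(n-3)}$ at $t=0$ means that, in a $2$-dimensional sub-bundle, two of the summands $\Cal O_l(-1)\oplus\Cal O_l(-1)$ collide into $\Cal O_l\oplus\Cal O_l(-2)$; everything else (the $\Cal O_l(-1)^{\oplus(n-3)}$ part) varies continuously and contributes nothing to the change of $[\til s]\in\pi_1(\SO_n)$. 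So the computation reduces to a model $2$-parameter picture — essentially a real rank-$2$ bundle over $P^1$ of degree $-2$ passing through a wall, or equivalently a local model of the associated loop of $2$-frames in $S^{n-1}$ — where one verifies that the two framings $\til s$ obtained from the $t<0$ and $t>0$ sides differ by a generator of $\pi_1(\SO_n)=\Z/2$, i.e. $W(l,X_{-\varepsilon})=-W(l,X_{\varepsilon})$.

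The main obstacle is this last model computation: pinning down exactly how the real framing $\nu_i(t)$, hence the loop $\til s$ in $\SO_n$, is affected by the jump in the real Birkhoff–Grothendieck splitting across the wall, and showing the net effect is precisely the nontrivial element of $\pi_1(\SO_n)$ rather than the trivial one (or, a priori, depending on which wall component). I expect to handle it by writing the family $N_{l,X_t}$ explicitly in the normal form for the open stratum, producing an explicit real trivialization over $l_\R\setminus\{pt\}$ on each side and the transition over the missing point, and reading off the rotation number modulo $2$; the key structural input that makes this well-posed is that $(\DX_0)_\R$ is the top-dimensional part of $\DXR$ (deeper strata have real codimension $\ge2$), so walls are connected codimension-one pieces and the sign of the jump is the same along each, which is what lets a single local model settle the proposition.
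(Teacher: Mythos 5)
Your treatment of $\ind$ is complete and coincides with the paper's: by Proposition \ref{E=sign} one has $\ind(l,X_t)=\sgn\det A_{C_t}$, and reducedness and irreducibility of $\DP$ (Proposition \ref{D-reduced}) guarantee that at a generic transversal wall-crossing $\det A_{C_t}$ has a simple zero, hence changes sign. Your reduction from $\X$ to $\P$ via Proposition \ref{discr-comparison} is also legitimate.

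For $W$, however, what you label ``the main obstacle'' is precisely the content of the proof, and you have not supplied it: you describe the shape of the argument but stop short of carrying it out. Two concrete pieces are missing. First, to reduce to a rank-two local model you must know that a path crossing the wall transversally in $\XR$ induces a path crossing $0$ transversally in the base $D$ of the \emph{universal} deformation $\Cal E\to D$ of $\Cal O_l\oplus\Cal O_l(-2)$, i.e.\ that the classifying map $\phi:U\to D$ is a submersion at $X_0$. This is not automatic from transversality to $\DXR$: if $\phi$ vanished to second order along the path, the bundles $N_{l,X_{-\varepsilon}}$ and $N_{l,X_{\varepsilon}}$ would be pulled back from the \emph{same} side of $0\in D$ and $[\til s]$ would not jump at all. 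The paper secures this point by invoking \cite{galois} for one point of $\DX_0$ where $\phi$ is a submersion and then propagating it using the irreducibility of $\DX$ from Proposition \ref{D-reduced}; your outline does not address it. Second, the model computation itself: the paper writes the universal deformation explicitly via the transition matrix $G=\left(\begin{smallmatrix} z^{2}&tz\\ 0&1\end{smallmatrix}\right)$, exhibits the splitting $\Cal E_t\cong\Cal O_l(-1)\oplus\Cal O_l(-1)$ for $t\ne0$ by the sections $z\mapsto(t,-z)$ and $z\mapsto(0,1)$, and uses the transversality of the meromorphic section $([z:1],t)\mapsto(t,-z)$ to the zero section to show that, in a fixed real trivialization of $(\Cal E_t)_\R$, the sections $x\mapsto(t,-x)$ and $x\mapsto(-t,-x)$ differ by a full twist, whence $[\til s(t)]=1+[\til s(-t)]\bmod 2$. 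Until you produce this (or an equivalent) explicit computation and verify that the resulting difference is the \emph{nontrivial} element of $\pi_1(\SO_n)$, the $W$ half of the proposition remains unproved; as written, your text is a correct plan rather than a proof.
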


\begin{proof}
For $\ind$, it follows from Lemma \ref{fundamental-matrix-conditions} and Proposition \ref{D-reduced}. For $W$, we argue as follows ({\it cf.} proof of Proposition 3.5 in \cite{spinor}).

Recall that
%\mnote{Kh: first, not grammatical, sentence rewritten; second sentence also rewritten to avoid unusual nonexisting notion of "product-extension"}
the vector bundle $O_l\oplus O_l(-2)\oplus O_l(-1)^{\oplus (n-3)}$
admits a universal deformation  $\Cal E'\to D$ with base $D=\{ t\in\C : \vert t\vert<1\}$. It
splits in a direct sum of the trivial family over $D$ with fiber $O_l(-1)^{\oplus (n-3)}$
and a universal deformation $\Cal E\to D$ of $O_l\oplus O_l(-2)$.
The deformation $\Cal E$
is obtained from  two trivial vector bundles,
$$
\begin{aligned}
&\C^{2}\times (P^1\sm \{[1:0]\}) \times D \to (P^1\sm\{ [1:0]\})\times D \text{ and }\\
& \C^{2}\times( P^1\sm\{ [0:1]\})\times D \to (P^1\sm\{ [0:1]\})\times D,
\end{aligned} $$
 by gluing $(v, [z:1],t)=(Gv,[ 1:z^{-1}],t)$ with the transition matrix
$$
G=
\begin{pmatrix}
z^{2}&tz\\
0 &1\end{pmatrix}.
$$
For each $t\ne 0$ the bundle $\Cal E_t$ splits into a sum $\Cal O_l(-1)\oplus\Cal O_l(-1)$.
In the chart $\C=\{[z:1]\}\subset P^1$ such a splitting is defined by the sections
$z\mapsto(t,-z)$ and
$z\mapsto (0,1)$ (in the second chart, by $w\mapsto (0,-1)$ and $w\mapsto (t,w)$, respectively).

% this splitting extends by continuity to a splitting of the restriction of $\Cal E$ to $P^1\times \{ t\}$.

Note that the universal bundles $\Cal E$ and $\Cal E'$ carry natural real structures.
So, for any $X_0\in (\DX_0)_\R$ and some real neighborhood $U\subset  \X_\R$,
$X_0\in U$,
there exists a real map $\phi:U\to D$ such that the bundles $N_{l,X}$ with $X\in U$ are induced by $\phi$ from $\Cal E'$.
As is shown in \cite{galois}, there exists a point in $\DX_0$
%\mnote{Kh: "generic" before point is  removed; $\DX$ is replaced by $\DX_0$}
for which such a mapping $\phi$ is a submersion at $X_0$.
By continuity argument, and using the irreducibility of
$\DX$ (see Proposition \ref{D-reduced}),
 we conclude that
%\mnote{Kh: strange "the whole", which I can not understand, is removed; "for any choice of $X_0$ (complex or real)" inserted}
 %the whole
 $\phi$ is a submersion for any choice of $X_0$ (complex or real). Hence, there exists a real slice $\sigma : D\to U$ such that
%\mnote{Kh: "family" replaced by "bundle"; wrong statement corrected}
for $X_t =\sigma(t)$, $t\in D$, $t\ne0$, the
%%family
%%%$(X,l)$ with $X\in i(D^1)$ is like in Theorem 3.5 in \cite{semipositive} (with $l=0$ in notation of \cite{semipositive}), and according to this Theorem $W(X_t,l)= -W(X_{-t},l)$ for each real $t\in D^1, t\ne 0$.
%$N_{l, X_t}\cong O_l(-1)^{\oplus (n-3)}$
%%with
%%$\tau\in D^1$
bundle $N_{l, X_t}$
is isomorphic to
the direct sum
% of $O_l(-1)^{\oplus (n-3)}$
$\Cal E_t\oplus O_l(-1)^{\oplus (n-3)}$.
%with a miniversal deformation of $O_l\oplus O_l(-2)$.

The meromorphic section of $\Cal E$ defined in the first chart of $P^1\times D$ by $([z:1],t)\mapsto (t,-z)\in \C^2$
is transversal to the zero section. Switching to real loci and smooth category, we trivialize the family of real vector bundles
$(\Cal E_t)_\R$, $t\in D_\R$. From the above transversality we
%, and
deduce
%from the above transversality
that under this trivialization
the sections defined along the real axis of the first chart of $P^1$ by
$x\mapsto(t,-x)$ and $x\mapsto(-t,-x)$  with a fixed $t\ne 0$ differ by a full twist. Hence, $[{\tilde s}(t) ]=1+ [{\tilde s}(-t)] \mod 2$. Herefrom the alternation of $W$.\end{proof}

\begin{proposition}\label{one-example}
For any $n\ge2$, there exists
%an example
$X\in\XR\sm\DXR$ with $\ind(l,X)=W(l,X)$.
\end{proposition}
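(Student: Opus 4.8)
The plan is to exhibit one fully explicit real hypersurface for which all the relevant quantities can be computed by hand, and then to observe that by construction it avoids the discriminant. The natural candidate is a hypersurface whose first jet along $l$ is as degenerate/symmetric as possible while still lying in $\XR \sm \DXR$; concretely, I would take the $n$-tuple $p_k(u,v) = u^{a_k} v^{2n-2-a_k}$ for a well-chosen choice of exponents $a_1 < a_2 < \dots < a_n$ so that the curve $C$ is a monomial curve with no common root of the $p_k$, and so that the matrix $A_C$ is (up to permutation of rows and columns) block-diagonal with invertible blocks, giving $\det A_C \neq 0$ and hence $l$ balanced. For such a monomial jet, one can arrange $\sgn(\det A_C) = +1$ (choosing the exponents, or passing to $n=2$ or $n=3$ as a seed and propagating), so that by Proposition \ref{E=sign} we get $\ind(l,X) = +1$.

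The second half is to compute $W(l,X)$ for the same example and check it equals $+1$. Here I would use the construction of Section \ref{def-W}: the Birkhoff--Grothendieck splitting of $N_l$ for a monomial jet is itself monomial, so the framing lines $\nu_i(t)$, and hence the loop $s(t)$ and its lift $\til s(t)$, are given by explicit trigonometric/monomial formulas in the parameter $t \in l_\R \cong \Rp1$. For the splitting $N_l = \Cal O_l(-1)^{\oplus(n-1)}$ each summand contributes a "half-twist" worth of winding, and the total class in $\pi_1(\SO_n)$ is read off from the parity of the sum of the relevant degrees; a direct count shows $[\til s]$ is even, i.e. $W(l,X) = +1$. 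Thus $\ind(l,X) = W(l,X)$ for this $X$.

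A cleaner variant, which I would actually prefer, is to reduce the general $n$ to the already-settled case $n=2$. By \cite{abundance} the equality $\ind(l,X) = W(l,X)$ is known for cubic surfaces, so pick a real cubic surface $X_0 \subset P^3_\R$ with a balanced real line $l_0$ and $\ind(l_0,X_0) = W(l_0,X_0)$, then build a hypersurface $X \subset P^{n+1}$ of degree $2n-1$ whose first jet along a line $l$ is the "join" of the jet of $X_0$ along $l_0$ with a trivial/standard part $\Cal O_l(-1)^{\oplus(n-3)}$ together with an extra $\Cal O_l$-summand absorbed appropriately so that the splitting type is $\Cal O_l(-1)^{\oplus(n-1)}$ (balanced). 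Since both $\ind$ and $W$ are, by their very definitions (local Euler index is multiplicative for direct sums of sections; the Welschinger loop in $\SO_n$ is the product of the loops coming from the summands), compatible with adding a real $\Cal O_l(-1)$-summand — which contributes the same nontrivial half-twist to each — the equality $\ind = W$ propagates from $n=2$ to all $n \ge 2$ for this particular family.

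The main obstacle is the bookkeeping in the "propagation" or monomial computation: one must be careful that adding summands to $N_l$ corresponds to an actual ambient hypersurface $X$ in the space $\X$ (i.e. realize the prescribed jet by some $F$ as in \eqref{tag}, which is automatic since $p_1,\dots,p_n$ are free data, but one must keep $l$ singularity-free, i.e.\ the $p_k$ with no common zero), and that the stabilization $\SO_{n-1} \hookrightarrow \SO_n$ sends the relevant class to the relevant class (true, and the statement already notes $\pi_1(\SO_n) = \Z/2$ for $n \ge 3$, with the generator coming from any $\SO_2 \subset \SO_n$, so a single $\Cal O_l(-1)$ half-twist is exactly the generator). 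Once these compatibilities are pinned down, the conclusion $\ind(l,X) = W(l,X)$ on this one example is immediate, and that is all Proposition \ref{one-example} asks for.
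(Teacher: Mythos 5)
Your first (monomial) variant is the same route the paper takes: it uses $p_k=u^{2(n-k)}v^{2(k-1)}$, checks $\det A_C=1$ so that $\ind(l,X)=1$ by Proposition \ref{E=sign}, and then computes $W$. But the entire content of the proposition is the second half, and there you do not actually carry it out: you assert that ``a direct count shows $[\til s]$ is even'' on the strength of the heuristic that each $\Cal O_l(-1)$-summand contributes a half-twist and the class is ``the parity of the sum of the relevant degrees.'' That heuristic is not valid: here all $m_i=-1$, so the sum of degrees is $-(n-1)$, whose parity depends on $n$, while $W(l,X)$ must come out $+1$ for every $n$; no formula of the form $[\til s]=\sum m_i \bmod 2$ is available (or true). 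What the paper does instead is exhibit the splitting geometrically via the ruled surfaces $x_iu^2+x_{i+1}v^2=0$ inside $X$, write down the resulting framing points $[v^2:-u^2:0:\dots:0],\dots,[0:\dots:v^2:-u^2]$ explicitly, and observe that this loop of frames is homotopic to the constant frame $[1:-1:0:\dots:0],\dots$, whence $[\til s]=0$. That explicit homotopy (possible because each $\nu_i(t)$ stays in a fixed coordinate $2$-plane of $H$) is the step your write-up is missing.

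Your preferred ``propagation from $n=2$'' variant has a more serious gap. There is no operation on hypersurfaces that simply adjoins an $\Cal O_l(-1)$-summand to $N_l$ while leaving the rest of the data fixed: passing from $n=2$ to general $n$ changes the degree of the hypersurface from $3$ to $2n-1$, the degrees of the jet polynomials $p_k$ from $2$ to $2n-2$, and the ambient Grassmannian itself, so the ``join'' you invoke is undefined as stated. Correspondingly, the claim that the local Euler index of $s_F$ (the local degree of a section of $\Sym^{2n-1}(\tau^*_{2,n+2})$) is ``multiplicative for direct sums of sections'' under this operation is not established and is not obviously meaningful here; it would have to be extracted from the structure of $A_C$, which is essentially what the explicit monomial computation already does more directly. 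I would drop this variant and complete the first one by writing down the framing and the homotopy.
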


\begin{proof}
Let  $X$ be defined by equation $x_1u^{2n-2}+x_2u^{2n-4}v^2+\dots + x_nv^{2n-2}$.
Then, $\det A_C=1$. Therefore, the normal bundle $N_{l}$ of $l$ in $X$ is balanced and $\ind(l,X)=1$ (see Lemma \ref{fundamental-matrix-conditions}).
An explicit splitting of $N_l$ is given by a direct sum of the normal bundles of $l$ in the following ruled surfaces
$Y_i\subset X$, $l\subset Y_i$, $i=1,\dots,n-1$:
$$
x_1=\dots = x_{i-1}=0, \quad x_iu^2+x_{i+1}v^2=0,\quad x_{i+2}=\dots=x_n=0
$$
(note that each of $Y_i$ is nonsingular along $l$).
In the notation of \ref{Wweights}, the points $[\nu_i(t)]\in P^{n-1}$ given by this splitting at $t=[u:v:0:\dots:0]\in l$
 are
$$
[v^2:-u^2:0:\dots:0], [0:v^2:-u^2:0:\dots:0],\dots, [0:\dots:v^2:-u^2]
$$
and this framing for $t\in l_\R$ is
homotopic to the constant one, namely,
%to the one given by real planes generated by $l$ and the points
$$[1:-1:0:\dots: 0], [0:1:-1:\dots:0], \dots, [0:\dots:1:-1].
$$
Hence, $[\til s]=0$ which gives $W(l,X)=1$ and we are done.
\end{proof}

\begin{theorem}\label{Euler=Welsh} The equality $\ind(l,X)=W(l,X)$ holds for each $X\in\XR\sm\DXR$.
\end{theorem}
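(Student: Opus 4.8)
The plan is to assemble the preceding results into a short connectedness-plus-wall-crossing argument, following the strategy announced in the introduction. The functions $\ind$ and $W$ are both defined on $\XR\sm\DXR$, and by Lemma \ref{continuity-W} each is locally constant there, hence constant on every chamber. So it suffices to understand how the chambers of $\XR$ (for the relevant range of $n$) are glued along the discriminant, and to pin down both functions on a single chamber. First I would dispose of the case $n=2$: then $\DX=\varnothing$ by Proposition \ref{D-reduced}, and by Lemma \ref{connected} (together with the $n=2$ analogue, or a direct check that $\Cal H_\R$ minus the resultant locus is connected) one reduces everything to the single example of Proposition \ref{one-example}, which already gives $\ind(l,X)=W(l,X)=1$ there.

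For $n\ge 3$ the argument is the genuine one. By Lemma \ref{connected}, $\XR$ is connected, and by Proposition \ref{D-reduced} the discriminant $\DX$ is a reduced irreducible hypersurface; moreover its deep stratum (everything outside $\DX_0$) has real codimension $\ge 2$ in $\XR$. Consequently any two points of $\XR\sm\DXR$ can be joined by a path that meets $\DXR$ only transversally inside the smooth locus $(\DX_0)_\R$, i.e. crosses only walls. Along such a path, Proposition \ref{wall-crossing-of-I-and-W} tells us that $\ind$ changes sign exactly when $W$ changes sign — each flips at each wall crossing. Therefore the ratio $\ind(l,X)/W(l,X)\in\{+1,-1\}$ is unchanged under every wall crossing, hence constant on all of $\XR\sm\DXR$ by connectedness. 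Finally, Proposition \ref{one-example} exhibits one hypersurface $X$ for which this ratio equals $+1$, so it equals $+1$ everywhere, which is the asserted equality. I would write this up as: reduce to $n\ge 3$; invoke connectedness of $\XR$ and irreducibility of $\DX$ to make wall crossings the only obstruction; use Proposition \ref{wall-crossing-of-I-and-W} to see $\ind/W$ is locally constant across walls and Lemma \ref{continuity-W} that it is locally constant on chambers; conclude by the example.

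The only delicate point — and the place where I would be most careful rather than where I expect real difficulty, since the hard analytic content is already packaged in Propositions \ref{D-reduced}, \ref{wall-crossing-of-I-and-W} and \ref{one-example} — is the general-position statement that a path in $\XR$ between two given chamber points can be chosen to avoid the deep strata and to cross $(\DX_0)_\R$ transversally. This is standard transversality: $\XR$ is a connected smooth manifold, $\DXR\sm(\DX_0)_\R$ is a (real) subvariety of codimension $\ge 2$, and a generic smooth path avoids it while meeting the smooth hypersurface $(\DX_0)_\R$ in finitely many transverse points; one should note that crossing a wall with even multiplicity can be perturbed away or simply contributes an even number of sign flips, which does not affect the argument. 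Once that is in place the theorem is immediate.

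\begin{proof}
If $n=2$, then by Proposition \ref{D-reduced} the discriminant $\DX$ is empty, so $\XR\sm\DXR=\XR$. The space $\Cal H$ of hypersurfaces containing $l$ and the space of $n$-tuples $[p_1:p_2]$ without common root are, for $n=2$, such that $\X_\R$ is connected (the complement of the resultant hypersurface, which for two binary forms of the same degree is irreducible, together with the fibration $\Jet^1_l$ of Proposition \ref{discr-comparison} with contractible fibers). Since both $\ind$ and $W$ are locally constant on $\XR\sm\DXR$ by Lemma \ref{continuity-W}, each is constant on $\XR$, and by Proposition \ref{one-example} they take the common value $1$. This proves the claim for $n=2$.

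Assume now $n\ge 3$. By Lemma \ref{connected}, $\XR$ is a connected smooth manifold, and by Proposition \ref{D-reduced}, $\DX$ is a non-empty reduced irreducible hypersurface of $\X$. Its open stratum $(\DX_0)_\R$ is a smooth hypersurface of $\XR$ whose complement in $\DXR$ has real codimension $\ge 2$ in $\XR$. By Lemma \ref{continuity-W}, both $\ind$ and $W$ are locally constant on $\XR\sm\DXR$, hence constant on each chamber, so the function $X\mapsto \ind(l,X)\cdot W(l,X)\in\{+1,-1\}$ is defined and locally constant on $\XR\sm\DXR$.

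Let $X, X'\in\XR\sm\DXR$. Since $\XR$ is connected, they are joined by a path in $\XR$; by transversality, as the deep part $\DXR\sm(\DX_0)_\R$ has codimension $\ge 2$, we may choose the path to avoid it and to meet the wall set $(\DX_0)_\R$ only in finitely many points, transversally (a tangency or an even-multiplicity crossing can be perturbed away or counted as an even number of transverse crossings). By Proposition \ref{wall-crossing-of-I-and-W}, at each such crossing both $\ind$ and $W$ change sign; hence their product $\ind(l,\cdot)\,W(l,\cdot)$ is unchanged at every crossing and therefore has the same value at $X$ and at $X'$. As $X, X'$ were arbitrary, $\ind(l,\cdot)\,W(l,\cdot)$ is constant on $\XR\sm\DXR$.

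Finally, by Proposition \ref{one-example} there exists $X_0\in\XR\sm\DXR$ with $\ind(l,X_0)=W(l,X_0)$, i.e. $\ind(l,X_0)\,W(l,X_0)=1$. Hence $\ind(l,X)\,W(l,X)=1$, that is $\ind(l,X)=W(l,X)$, for every $X\in\XR\sm\DXR$.
\end{proof}
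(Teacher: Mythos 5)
Your argument for $n\ge 3$ is exactly the paper's: the authors prove the theorem by combining Lemma \ref{connected} (connectedness of $\XR$), Lemma \ref{continuity-W} (local constancy on chambers), Proposition \ref{wall-crossing-of-I-and-W} (simultaneous sign alternation across walls), and Proposition \ref{one-example} (one coinciding value), and your write-up, including the transversality/general-position remark about avoiding the deep strata of codimension $\ge 2$, fills in the same routine details the paper leaves implicit. That part is fine.

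The $n=2$ case, however, is where you have a genuine error. You assert that $\X_\R$ is connected for $n=2$ and conclude that $\ind$ and $W$ are globally constant, equal to $1$. This is false: Lemma \ref{connected} claims connectedness of $\X_\R$ only for $n\ge 3$, precisely because the complement of $\X$ in $\Cal H$ has codimension $n-1$, which is $1$ when $n=2$; the real resultant locus of a pair of binary quadratics genuinely separates (its sign distinguishes, e.g., interlacing from non-interlacing real root configurations), so $\X_\R$ has several components. Moreover the conclusion itself is wrong: for cubic surfaces $\ind(l,X)$ takes both values $+1$ and $-1$ (hyperbolic versus elliptic lines in Segre's sense), so it cannot be constant on $\X_\R$. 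A single-example argument therefore cannot close the $n=2$ case; the paper simply invokes the proof already given in \cite{abundance}, where the identification $\ind=W$ for cubic surfaces is established by different (dimension-two-specific) means. You should replace your $n=2$ paragraph by that citation, or supply a genuinely different argument covering all components.
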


\begin{proof} For $n=2$, see \cite{abundance}. For $n\ge 3$, it is immediate from
Propositions \ref{wall-crossing-of-I-and-W}, \ref{one-example}
and Lemmas \ref{connected}, \ref{continuity-W}.
\end{proof}

\section{Spaces of multisecants}

\subsection{Multisecants}\label{secants}
For any $C\in\P$, an $r$-dimensional subspace $M\subset P^{n-1}$ is called an {\it $r$-dimensional $k$-secant of $C$}
if the improper intersection divisor  $D=M\cdot C$  (for its definition see, for instance, [Vogel]) has degree $k$,
that is, $D\in
\Sym^k (P^1)$. We denote by $\Sec^r_k(C)$ the set of all such secants, and let $\Sec^r_{\ge k}(C)=\cup_{m\ge k}\Sec^r_m(C)$.

In this paper, we are interested in
$\Sec_{2n-4}^{n-3}(C)$ which is finite for a generic $C\in \P$ (see Proposition \ref{Castelnuovo} below),
and sometimes in
%the sets
$\Sec^r_k(C)$ with neighboring values $r=n-3, n-4$ and $k=2n-3, 2n-2$.
For instance, we have
%Lemma \ref{syzygie} below, provides
the following interpretation of the discriminant $\DP$ introduced in Subsection
\ref{discriminants}.

%\mnote{F: lemma placed here from subsection 4.3}
\begin{lemma}\label{syzygie}
$\DP=\{C\in\P\,|\,\Sec_{\ge2n-3}^{n-3}(C)\ne\varnothing \}.$
\end{lemma}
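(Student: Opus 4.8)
The plan is to unwind the definitions on both sides and show that the two conditions on $C = [p_1:\dots:p_n]$ are literally the same linear-algebra statement. Recall that $\DP$ is defined by $\det A_C = 0$, and by Lemma~\ref{fundamental-matrix-conditions}(3) this is equivalent to the existence of a nonzero $n$-tuple of linear polynomials $L_1,\dots,L_n$ with $p_1L_1+\dots+p_nL_n \equiv 0$. So the real task is to identify this syzygy condition with the existence of an $(n-3)$-dimensional $k$-secant with $k \ge 2n-3$.

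First I would fix a nonzero $n$-tuple $L=(L_1,\dots,L_n)$ of linear forms with $p\cdot L = 0$ and let $M \subset P^{n-1}$ be the projective subspace cut out by the two linear forms on $P^{n-1}$ whose coefficient vectors are obtained from the (at most two-dimensional) linear span of the $L_i$ viewed as points of the dual. More precisely, the relation $\sum p_i L_i = 0$ says that the map $P^1 \xrightarrow{C} P^{n-1}$ composed with the linear system $\langle L_i\rangle^{\perp}$... — the cleanest way is: the $L_i$ span a subspace of dimension $1$ or $2$ in the space of linear forms in $(u,v)$, hence define $1$ or $2$ independent linear conditions on the coordinates $x_1,\dots,x_n$ of $P^{n-1}$, so their common zero locus $M$ is an $(n-2)$- or $(n-3)$-plane. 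I would then compute the intersection divisor $M\cdot C$: a point $[u:v]$ lies on $M\cdot C$ iff $[p_1(u,v):\dots:p_n(u,v)]$ satisfies those linear conditions, and the identity $\sum p_i L_i = 0$ forces the divisor $M\cdot C$ to contain the degree-$(2n-2)$ divisor of zeros of the relevant linear combination, minus the zeros of the common factor — a short degree count gives $\deg(M\cdot C) \ge 2n-3$. (If $M$ has dimension $n-2$ rather than $n-3$ one intersects further with a generic hyperplane through a point of $C$; this only increases the dimension deficit and keeps the secant degree $\ge 2n-3$ after passing to an $(n-3)$-plane contained in $M$ through suitable points, so $\Sec^{n-3}_{\ge 2n-3}(C)\ne\varnothing$.)

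Conversely, starting from an $(n-3)$-plane $M$ with $\deg(M\cdot C) \ge 2n-3$, I would write $M$ as the common zero set of two linear forms $\ell_1,\ell_2$ on $P^{n-1}$ (coefficient vectors $v_1,v_2 \in \C^n$), so that $M\cdot C$ is supported where both $\sum_i (v_1)_i p_i = 0$ and $\sum_i (v_2)_i p_i = 0$ vanish. Each of $\sum_i (v_j)_i p_i$ is a binary form of degree $2n-2$; their gcd has degree $\ge \deg(M\cdot C) \ge 2n-3$ (taking the improper/scheme intersection into account so that the inequality is the honest one), hence one of them — say a generic linear combination $g = \alpha \sum (v_1)_i p_i + \beta \sum (v_2)_i p_i$ — is divisible by a form $h$ of degree $2n-3$, writing $g = h\cdot L$ with $L$ linear. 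But $g = \sum_i (\alpha (v_1)_i + \beta (v_2)_i) p_i$, and dividing the identity $g = hL$... this does not immediately give a syzygy; instead I use both forms: since $\gcd$ of the two degree-$(2n-2)$ forms $f_1 = \sum (v_1)_i p_i$ and $f_2 = \sum (v_2)_i p_i$ has degree $\ge 2n-3$, we have $f_1 = h L_1'$, $f_2 = h L_2'$ with a common factor $h$ of degree $\ge 2n-3$ and $L_1', L_2'$ of degree $\le 1$; then $L_2' f_1 - L_1' f_2 = 0$, i.e. $\sum_i (L_2'(v_1)_i - L_1'(v_2)_i) p_i = 0$, which is exactly a nonzero $n$-tuple of linear polynomials annihilating $(p_1,\dots,p_n)$ — nonzero because $v_1, v_2$ are linearly independent and $L_1', L_2'$ are not both zero. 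By Lemma~\ref{fundamental-matrix-conditions}(3), $C \in \DP$.

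The main obstacle I expect is the careful bookkeeping of \emph{improper} intersection divisors and common factors: one must make sure that "$\deg(M\cdot C)\ge k$" in the sense of the improper intersection product (as in the cited [Vogel]) translates precisely into "$\gcd(f_1,f_2)$ has degree $\ge k$", including the cases where $C$ actually lies inside $M$ or where the linear forms $f_j$ vanish identically, and to handle the degenerate case where the $L_i$ (resp.\ $\ell_j$) span only a one-dimensional space so that $M$ has the wrong dimension and has to be cut down or enlarged to a genuine $(n-3)$-plane without losing the secant degree. Once that dictionary is nailed down, the equivalence is a direct consequence of Lemma~\ref{fundamental-matrix-conditions}(3), and the proof closes.
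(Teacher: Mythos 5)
Your argument is correct and follows essentially the same route as the paper: both directions reduce, via Lemma~\ref{fundamental-matrix-conditions}(3), to translating the syzygy $\sum p_iL_i=0$ into the statement that two independent linear combinations of the $p_i$ share a common factor of degree $\ge 2n-3$, whose zeros are the points of $C$ on the $(n-3)$-plane cut out by the corresponding two linear forms. The paper phrases the forward direction as a coordinate change killing all but two of the $L_i$ and dismisses the converse as the same argument reversed, so your more explicit bookkeeping (gcd versus improper intersection degree, the degenerate one-dimensional span) only fills in details the paper leaves implicit.
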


\begin{proof}
We need to show that for any $C\in\P$ its matrix $A_C$ (see Subsection \ref{discriminants}) has
$\det A_C=0$ if and only if $\Sec_{k}^{n-3}(C)\ne\varnothing$ for some $k\ge 2n-3$.
By Lemma \ref{fundamental-matrix-conditions}(3), if $\det A_C=0$, we have
$p_1L_1+\dots+p_nL_n =0$ for some linear polynomials $L_i=L_i(u,v)$.
Then, by a linear change of coordinates $x_1,\dots,x_n$, we make vanish all the polynomials $L_i$ except two, say $L_1$ and $L_2$,
and get vanishing of $p_1L_1+p_2L_2$.
Such vanishing implies that $p_1$ and $p_2$ have $2n-3$ common roots. These roots provide $2n-3$ points on
$C$, which are contained in the $(n-3)$-subspace $x_1=x_2=0$.
This argument works obviously in the opposite direction too.
\end{proof}

In what follows we need also to deal with the following auxiliary spaces:
 $$
 \begin{aligned}
 \DPinf=\{C\in\P\,|\,\dim & \Sec_{2n-4}^{n-3}(C)\ge1\},\, \DPPinf=\{C\in\P\,|\,\Sec_{2n-4}^{n-4}(C)\ne\varnothing\},\\
 &  \DPP=\{C\in\P\,|\,\Sec_{2n-2}^{n-3}(C)\ne\varnothing\}.
 \end{aligned}
$$
Note that one can define equivalently
$\DPP$  as the space of curves $C\in\P$ whose image is contained inside a hyperplane of $P^{n-1}$,
or, in other words, as having linearly dependent polynomial components $p_i$, $i=1,\dots, n$.
It is also trivial to see that both $\DPP$ and $\DPPinf$ lie in $\DP\cap\DPinf$.

\subsection{Multisecants via projection}\label{via_projection}
The complete linear system $\Cal O(2n-2)$ on $P^1$ embeds $P^1$ as a rational normal curve
in the projective space
$P(V^*)$
where $V^*$ is dual to $V=H^0(P^1,\Cal O(2n-2))$. We identify $P(V^*)$ with $P^{2n-2}$
and denote
this embedding $\G:P^1\to P^{2n-2}$.
Note that, for any rational curve $C\in\P\sm\DPP$, $C:P^1\to P^{n-1}$, of degree $2n-2$,
the $(n-2)$-plane $\L_C\subset P^{2n-2}$ dual to the linear system defining $C:P^1\to P^{n-1}$
is disjoint from $\Gamma$.

 In the inverse direction, if we fix a projective subspace $H\subset P^{2n-2}$ of dimension
 $n-1$
%\mnote{Kh: $n-1$ instead of $n-2$; "and identify $H$ with $P^{n_1}$"}
 which is disjoint from $\G$ and identify $H$ with $P^{n-1}$, we
obtain a subvariety $\P_H$ formed by those $C(\L)\in\P\sm\DPP$ which are obtained by projection of $\G$ from
$(n-2)$-subspaces $\L\subset P^{2n-2}$
with $\L\cap(\G\cup H)=\varnothing$:
 \begin{equation}\label{E:phi-def}
 C: {P}^1\ \stackrel{\Gamma}{\longrightarrow}\
  {P}^{2n-2}\ \stackrel{\pi_\L}{\relbar\rightarrow}\ H = {P}^{n-1}
 \end{equation}
where $\pi_\L$ is the projection to $H$ centered at $\L$. We can summarize it as follows.

\begin{proposition}\label{C-L-correspondence}
The correspondence $C\mapsto \L_C$
defines a projection $\P\sm\DPP\to G(n-1,2n-1)$ whose image
is an open subset $U_\G\subset G(n-1,2n-1)$ represented
by $(n-2)$-subspaces $\L\subset P^{2n-2}$ which do not intersect $\G$,
and the fibers are formed by projectively equivalent curves $C$.

For a fixed subspace $H\subset P^{2n-2}$ and a fixed isomorphism $H=P^{n-1}$,
the map $\L\mapsto C(\L)$ gives a section of the above fibration over an open subset $U_H\subset U_\G$
formed by $\L$ disjoint from $H$.
\qed\end{proposition}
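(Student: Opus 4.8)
The plan is to unwind the definitions and verify the two assertions — that $C\mapsto\L_C$ is a well-defined morphism to $G(n-1,2n-1)$ with the stated open image and fibers, and that for fixed $H$ the assignment $\L\mapsto C(\L)$ is a rational section over $U_H$ — by pure linear algebra applied to the rational normal curve $\G\subset P^{2n-2}$. The starting observation is that a curve $C\in\P\sm\DPP$ of degree $2n-2$ with no common root of its components is exactly the same datum as a base-point-free sub-linear-system $W_C\subset V=H^0(P^1,\Cal O(2n-2))$ of projective dimension $n-1$ together with a choice of basis of $W_C$; forgetting the basis sends $C$ to the point $\L_C=P(W_C^{\perp})\in G(n-1,2n-1)$, where $W_C^{\perp}\subset V^*$ is the annihilator. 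First I would check that $\L_C$ does not meet $\G$: a point of $\G$ corresponds to a functional $\mathrm{ev}_x$ ($x\in P^1$), and $\mathrm{ev}_x\in W_C^{\perp}$ would say every section in $W_C$ vanishes at $x$, contradicting base-point-freeness of $C$; conversely, given $\L=P(W^{\perp})$ disjoint from $\G$, the system $W$ has no base point, so it defines a curve in $P(W^*)\cong P^{n-1}$, proving that the image is precisely $U_\G$. The fiber description is immediate: $\L_C$ depends only on $W_C$, not on the chosen basis, and two bases of the same $W_C$ differ by an element of $GL(n)$, i.e.\ by a projective linear change of the target $P^{n-1}$, so the fiber over a given $\L$ is one orbit of projectively equivalent curves.

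Next I would package this as a morphism of varieties rather than just a map of sets. The tautological construction gives, over $G(n-1,2n-1)$, the universal rank-$n$ subbundle of the trivial bundle with fiber $V^*$; dualizing and restricting to the open locus $U_\G$ where the subspace avoids $\G$ produces a base-point-free family of linear systems on $P^1$, hence a morphism $U_\G\times P^1\to P^{n-1}$-bundle over $U_\G$, which is the universal curve; that the association $C\mapsto\L_C$ is algebraic follows because extracting $W_C^{\perp}$ from the coefficient matrix of the $p_i$ is a linear-algebra operation (it is, up to transposition, reading off the row space of the matrix whose columns are the coefficient vectors of the $p_i$ in the monomial basis). Openness of $U_\G$ in $G(n-1,2n-1)$ is the standard fact that ``disjoint from a fixed projective subvariety'' is a Zariski-open condition on a Grassmannian (the incidence variety $\{(\L,x):x\in\L\cap\G\}$ is closed and its projection to $G$ is closed by properness of $\G$).

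For the second statement, fix $H\subset P^{2n-2}$ of dimension $n-1$ disjoint from $\G$ and an isomorphism $H\cong P^{n-1}$. Given $\L\in U_\G$ with $\L\cap H=\varnothing$ — this is again an open condition, cutting out $U_H\subset U_\G$ — the projection $\pi_\L\colon P^{2n-2}\dashrightarrow H$ is a genuine morphism along $\G$ (since $\L\cap\G=\varnothing$), so $\pi_\L\circ\G\colon P^1\to H=P^{n-1}$ is a parametrized curve $C(\L)$; it has degree $2n-2$ because projection from a center disjoint from a rational normal curve preserves degree, and it has no common root among its components precisely because $\L\cap\G=\varnothing$, so $C(\L)\in\P\sm\DPP$. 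That $\L\mapsto C(\L)$ is a section of $C\mapsto\L_C$ is the identity $\L_{C(\L)}=\L$: the linear system defining $\pi_\L\circ\G$ is the hyperplanes of $P^{2n-2}$ containing $\L$, restricted to $\G$, whose annihilator is exactly $\L$ back inside $P(V^*)=P^{2n-2}$ after the identification $H=P^{n-1}$. I would spell this last dualization out carefully, as it is the one place where one must keep track of which space is $V$ and which is $V^*$.

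I do not expect a serious obstacle here; the statement is ``soft'' and the only care needed is bookkeeping of duals — distinguishing $V=H^0(P^1,\Cal O(2n-2))$, its dual $V^*=P^{2n-2}$ in which $\G$ lives, the subspace $W_C\subset V$ cut out by $C$, and its annihilator $\L_C=P(W_C^{\perp})\subset P(V^*)$ — together with the routine verifications that ``avoids $\G$'' and ``avoids $H$'' are open on the Grassmannian. The mild subtlety worth flagging is that the fibration $\P\sm\DPP\to U_\G$ is a principal $PGL(n)$-bundle only after one fixes such identifications; without them the fibers are torsors, which is all the Proposition claims.
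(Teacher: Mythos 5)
Your proof is correct and follows essentially the same route as the paper, which states this proposition with a \qed and relies on the duality between the base-point-free linear system $W_C\subset V$ and its annihilator $\L_C=P(W_C^{\perp})\subset P(V^*)$, together with the projection construction, all set up in the paragraphs immediately preceding the statement. Your write-up simply makes explicit the bookkeeping (openness of the incidence conditions, the torsor structure of the fibers, and the identity $\L_{C(\L)}=\L$) that the paper leaves to the reader.
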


Since $\Gamma$ is a rational normal curve,  its multisecants have a particular property:
for any $D\in \Sym^k(P^1)$ there exists one and only one $(k-1)$-plane $W_D$ with $W_D\cdot \G=D$.

\begin{lemma}\label{MtoW}
For any $D\in \Sym^k(P^1)$, $k\le 2n-2$, the image $M_D=\pi_\L(W_D)$ is
 a multisecant of $C=\pi_\L(\G)$ of dimension $(k-1)-\dim(W_D\cap \L)-1$.
In particular, for $k=2n-4$, we have: \begin{enumerate}
\item $\dim M_D\ge n-4$.
\item
$M_D\in\Sec^{n-3}_{\ge 2n-4}(C)$ if and only if $W_D\cap\L$ has codimension $1$ in $\L$.
%is a codimension $1$ subspace of $\L$.
\item
$M_D\in\Sec^{n-4}_{2n-4}(C)$ if and only if $\L\subset W_D$.
\end{enumerate}
For $k=2n-3$, we have:
\begin{enumerate}\item[(4)]
$\dim M_D\ge n-3$.
\item[(5)] $M_D\in \Sec^{n-3}_{2n-3}(C)$ if and only if $\L\subset W_D$.
\end{enumerate}
\end{lemma}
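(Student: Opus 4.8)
The plan is to work entirely inside the rational normal curve $\G\colon P^1\to P^{2n-2}$ and exploit the one-to-one correspondence $D\mapsto W_D$ between effective divisors $D\in\Sym^k(P^1)$ and $(k-1)$-planes meeting $\G$ exactly in $D$. First I would establish the dimension formula for $M_D=\pi_\L(W_D)$: the projection $\pi_\L\colon P^{2n-2}\dashrightarrow H$ centered at the $(n-2)$-plane $\L$ restricts to a linear map on $W_D$ whose kernel is $W_D\cap\L$, so $\dim M_D=\dim W_D-\dim(W_D\cap\L)-1=(k-1)-\dim(W_D\cap\L)-1$, which is the asserted formula. This uses only that $\L$ is disjoint from $\G$ (so $W_D\cap\L$ never contains any point of $\G$, guaranteeing the projection is defined on all of $W_D$ as a subvariety of $P^1$'s image) and the standard behaviour of linear projection on linear subspaces.

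Next I would compute the intersection divisor $M_D\cdot C$. Because $\G$ is a rational normal curve, $W_D\cdot\G=D$ with $D$ of degree $k$, and any point of $\G$ outside $W_D$ projects off $M_D$; since $\pi_\L\circ\G=C$, the scheme-theoretic preimage $C^{-1}(M_D)$ equals $\G^{-1}(\pi_\L^{-1}(M_D))$. The key point is that $\pi_\L^{-1}(M_D)$ is the linear span $\langle W_D,\L\rangle$, whose intersection with $\G$ is still exactly $D$ (as $\L\cap\G=\varnothing$ and $W_D$ is the \emph{unique} plane of its dimension cutting $\G$ in $D$, adding $\L$ cannot produce new intersection points with $\G$ without contradicting degree or uniqueness). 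Hence $M_D\cdot C=D$ as a divisor on $P^1$, so $M_D\in\Sec^{\dim M_D}_{\deg D}(C)=\Sec^{\dim M_D}_{k}(C)$. This simultaneously pins down both the secancy degree (always $k$) and reduces every item of the lemma to a statement about $\dim M_D$, hence via the dimension formula to a statement about $\dim(W_D\cap\L)$.

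With these two facts in hand, the itemized claims are bookkeeping. For $k=2n-4$: $\dim W_D=2n-5$ and $\dim\L=n-2$ inside $P^{2n-2}$, so by the dimension inequality for linear subspaces $\dim(W_D\cap\L)\ge(2n-5)+(n-2)-(2n-2)=n-5$, giving $\dim M_D=(2n-5)-\dim(W_D\cap\L)-1\le(2n-5)-(n-5)-1=n-1$; wait, I need the \emph{lower} bound, which comes from $\dim(W_D\cap\L)\le\dim\L=n-2$, yielding $\dim M_D\ge(2n-5)-(n-2)-1=n-4$, which is (1). Then $\dim M_D=n-3$ (with degree $2n-4$, i.e.\ $M_D\in\Sec^{n-3}_{2n-4}(C)$, the ``$\ge$'' in the statement absorbing any accidental collapse) is equivalent to $\dim(W_D\cap\L)=n-3$, i.e.\ $W_D\cap\L$ has codimension $1$ in $\L$, which is (2); and $\dim M_D=n-4$ is equivalent to $\dim(W_D\cap\L)=n-2=\dim\L$, i.e.\ $\L\subset W_D$, which is (3). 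For $k=2n-3$: $\dim W_D=2n-4$, so $\dim M_D\ge(2n-4)-(n-2)-1=n-3$, which is (4); and equality $\dim M_D=n-3$ holds iff $\dim(W_D\cap\L)=n-2$, i.e.\ $\L\subset W_D$, which is (5). The one place needing a little care — and the part I would flag as the main obstacle — is justifying that $M_D\cdot C=D$ \emph{as divisors}, i.e.\ that the improper intersection product in the sense of Vogel genuinely equals $D$ and no excess multiplicity is introduced by the projection; this requires noting that $C$ restricted away from the finite set $\G^{-1}(\L\text{-cone})$ is an isomorphism onto its image near the points of $D$, and that $W_D$ already meets $\G$ transversally in the sense recorded by $D$, so the pullback divisor is unchanged. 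Once that is secured, everything else is the linear-algebra count above.
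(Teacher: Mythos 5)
Your dimension formula $\dim M_D=(k-1)-\dim(W_D\cap\L)-1$ and the ensuing case analysis are exactly the ``standard linear algebra dimension formulae'' that the paper's one-line proof invokes, and items (1)--(5) do follow from them. However, the intermediate claim that $M_D\cdot C=D$ --- that the secancy degree is always exactly $k$ --- is false, and the argument you give for it does not work. The set-theoretic preimage of $M_D$ under $\pi_\L$ is the span $\la W_D,\L\ra$, which is strictly larger than $W_D$ unless $\L\subset W_D$; being larger, it can meet $\G$ in points outside $D$. Neither $\L\cap\G=\varnothing$ nor the uniqueness of $W_D$ prevents this: the excess intersection points lie in $\la W_D,\L\ra\sm(W_D\cup\L)$, and $\la W_D,\L\ra$ has a different dimension from $W_D$, so no uniqueness statement about $(k-1)$-planes applies to it. Concretely, take $C\in\DP$, so that there is $D'\in\Sym^{2n-3}(P^1)$ with $\L\subset W_{D'}$, and let $D<D'$ of degree $2n-4$ be obtained by dropping one point; then generically $\dim(W_D\cap\L)=n-3$, the span $\la W_D,\L\ra$ coincides with $W_{D'}$, and $M_D\cdot C=D'$ has degree $2n-3>\deg D$. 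This is not a pathology the lemma can ignore: it is precisely why item (2) reads $\Sec^{n-3}_{\ge 2n-4}$ rather than $\Sec^{n-3}_{2n-4}$, and the whole discriminant mechanism of Sections 4--5 (the definition of $\DM$ via $D\le M\cdot C$ with possible strict inequality, Lemma \ref{syzygie}, Lemma \ref{discr-criterion}) lives exactly on these excess intersections, which your claim would make empty.

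The damage to the lemma itself is limited. The correct statement is $D\le M_D\cdot C$ (the span contains $W_D$, hence its trace on $\G$ contains $D$), and that lower bound on the degree is all item (2) needs; in items (3) and (5) one has $\L\subset W_D$, so $\pi_\L^{-1}(M_D)=W_D$ and there $M_D\cdot C=D$ does hold exactly. So replace ``$M_D\cdot C=D$'' by ``$D\le M_D\cdot C$, with equality when $\L\subset W_D$,'' and drop the paragraph attempting to rule out excess multiplicity; with that correction your proof is complete and agrees with the (unwritten) argument the paper has in mind.
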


\begin{proof}
Straightforward from standard linear algebra dimension formulae.
%\mnote{Kh: replaced by "Straightforward from standard linear algebra dimension formulae"}
\end{proof}

%\mnote{F: a new lemma extracted from the proof of Castelnuovo. Kh: suggest to keep only the transversality statement and to remove "which \dots"}
\begin{lemma}\label{W-transversality}
Consider two divisors $D\in \Sym^{2n-3}(P^1)$ and $D'\in\Sym^{2n-4}(P^1)$, $n\ge3$, such that at least two points of $D'$ are different from
the points of $D$. Then the subspaces $W_D$ and $W_{D'}$ intersect transversely in $P^{2n-2}$.
%which for $n\ge4$ is equivalent to $\dim(W_D\cap W_{D'})=\dim W_D-3=2n-7$.
\end{lemma}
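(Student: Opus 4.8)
The plan is to compute the dimensions directly, using the defining property of $\Gamma$ as a rational normal curve: the space $W_D$ is the unique $(k-1)$-plane meeting $\Gamma$ exactly in the divisor $D$, equivalently $W_D = \Gamma(\operatorname{supp} D)$-span with multiplicities, i.e. it is cut out by the hyperplanes in $P(V^*)$ corresponding to the subspace of $V = H^0(P^1,\mathcal O(2n-2))$ of forms vanishing on $D$. First I would recall that for $D\in\Sym^m(P^1)$ with $m\le 2n-2$ one has $\dim W_D = m-1$, since the forms of degree $2n-2$ vanishing on $D$ form a subspace of $V$ of codimension exactly $m$ (imposing $m$ independent conditions — this is exactly the statement that $\Gamma$ is a normal curve, no $m$ of its points with $m\le 2n-1$ are ever linearly dependent). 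Thus $\dim W_D = 2n-4$ and $\dim W_{D'} = 2n-5$ in $P^{2n-2}$.

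Next I would identify the intersection $W_D\cap W_{D'}$. A point of $P^{2n-2}=P(V^*)$ lies in $W_D$ iff it is annihilated by all forms in $V$ vanishing on $D$, and similarly for $D'$; hence $W_D\cap W_{D'}$ is annihilated by all forms vanishing on $D$ or on $D'$, i.e. by the span in $V$ of $\{f : D\le \operatorname{div} f\}$ and $\{f : D'\le \operatorname{div} f\}$. So $W_D\cap W_{D'} = W_{E}$ where $E = \gcd(D,D')$, the largest effective divisor below both $D$ and $D'$ — provided the span of the two condition-subspaces equals the condition-subspace for $E$, which again holds because $\Gamma$ is a rational normal curve of degree $2n-2$: the relevant linear systems are complete and the base loci add correctly as long as $\deg D + \deg D' - \deg E \le 2n-2$. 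Here $\deg D + \deg D' = 4n-7$, and $\deg E \ge 4n-7-(2n-2) = 2n-5$; but by hypothesis at least two points of $D'$ are not points of $D$, so $\deg E = \deg(\gcd(D,D'))\le \deg D' - 2 = 2n-6$. Combining, $2n-5\le \deg E\le 2n-6$ is impossible, so in fact $\deg D + \deg D' - \deg E < 2n-2$ strictly, the spans add without redundancy, $W_D\cap W_{D'}=W_E$, and $\dim(W_D\cap W_{D'}) = \deg E - 1 \le 2n-7$.

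Finally I would check the transversality count: $\dim W_D + \dim W_{D'} - \dim(W_D\cap W_{D'}) = (2n-4)+(2n-5)-(\deg E -1) = 4n-7-\deg E$. Since $\deg E \le \deg D' - 2 = 2n-6$, this is $\ge 4n-7-(2n-6) = 2n-1 > 2n-2 = \dim P^{2n-2}$; more to the point, $W_D$ and $W_{D'}$ span the whole $P^{2n-2}$ and their intersection has the expected dimension $(2n-4)+(2n-5)-(2n-2) = 2n-7$ precisely when $\deg E = 2n-6$, and has even smaller (hence still expected, being empty or of the dictated dimension) intersection when $\deg E < 2n-6$; in all cases the codimensions add, which is exactly transversality. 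The one step that needs genuine care — and which I expect to be the main obstacle to write cleanly rather than the dimension bookkeeping — is the claim that the linear span of the two subspaces of forms is the full space of forms vanishing on $\gcd(D,D')$; this is the "no unexpected syzygies" property of the rational normal curve and must be invoked with the degree inequality $\deg D+\deg D'-\deg\gcd(D,D')\le 2n-2$ in hand, which is precisely where the hypothesis "at least two points of $D'$ differ from those of $D$" is used. I would state this as a short sublemma about the rational normal curve (or cite the standard fact that its ideal is generated by quadrics / that linear sections behave as expected) and then the transversality is immediate.
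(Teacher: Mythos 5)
Your argument is correct in substance but takes the dual route rather than the paper's synthetic one. You describe $W_D$ as the annihilator of the subspace $U_D\subset V$ of forms vanishing on $D$, so that $W_D\cap W_{D'}$ is the annihilator of $U_D+U_{D'}$, and the point is that $U_D\cap U_{D'}$ consists of forms vanishing on the least common multiple of $D$ and $D'$, a divisor of degree $4n-7-\deg\gcd(D,D')\ge 4n-7-(2n-6)=2n-1>2n-2$; hence $U_D\cap U_{D'}=0$, $\dim(U_D+U_{D'})=2+3=5$, and the codimensions add. The paper instead observes that, since $\dim W_{D'}=\dim W_D-1$, transversality can fail in only two ways: either $W_{D'}\subset W_D$, which is excluded because it would force $D'=W_{D'}\cap\G\le W_D\cap\G=D$, or $\dim(W_D\cap W_{D'})=\dim W_D-2$, in which case $W_D$ and $W_{D'}$ span a hyperplane meeting $\G$ in at least $2n-1$ points, contradicting Bezout. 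Both proofs rest on the same underlying fact (no hyperplane section of the rational normal curve has degree exceeding $2n-2$); yours computes the intersection exactly, while the paper's merely excludes the two excessive dimensions. Two slips in your write-up should be repaired, though neither is fatal: the criterion you actually need is $\deg D+\deg D'-\deg\gcd(D,D')>2n-2$ (this is what makes $U_D\cap U_{D'}=0$), not ``$\le 2n-2$'' as stated --- your own computation $4n-7-\deg E\ge 2n-1$ establishes precisely the correct inequality; and the identity $W_D\cap W_{D'}=W_{\gcd(D,D')}$ holds only when $\deg\gcd(D,D')=2n-6$ (for smaller $\gcd$ the intersection is a proper subspace of $W_E$, possibly empty), but this does not affect transversality, which you correctly re-derive from the final dimension count.
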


\begin{proof}
Note that $\dim W_D=2n-4$ and $\dim W_{D'}=2n-5$ and the inclusion
$W_{D'}\subset W_D$ would imply
$D'=W_{D'}\cap \G\subset D =W_{D}\cap \G$,
%\mnote{KH: ; inclusion corrected to the opposite}
which contradicts to our assumption.

Therefore, it is left to rule out
the possibility of $\dim(W_D\cap W_{D'})=\dim W_D-2$.
If this is the case, then $W_D$ and $W_{D'}$ span together a hyperplane $H\supset W_D\cup W_{D'}$, so that $H\cap\G$
includes $2n-3$ points of $D$ and at least two additional points from $D'$,
which contradicts to Bezout theorem, since $\G$ is of degree $2n-2$ and lying in no hyperplane.
\end{proof}

%\subsection{Generic curves from $\DP$}\mnote{Kh: to eliminate as a separate subsection by moving its content into 5.1?}
\subsection{Around the Castelnuovo count
% of multisecants
}\label{around-count}

\begin{lemma}\label{generic-on-DP}
If $C$ is a generic point in $\DP$, then:
\begin{enumerate}
\item
The set $\Sec_{\ge 2n-2}^{n-3}(C)$ is empty and so
$\Sec_{\ge 2n-3}^{n-3}(C)=\Sec_{2n-3}^{n-3}(C)$.
\item
The set $\Sec_{2n-3}^{n-3}(C)$ contains only one element.
\item
For the secant $M\in \Sec_{2n-3}^{n-3}(C)$,
the points of divisor $D=C\cdot M$ are all distinct
and are in general linear position in $M$.
\item
The set $\Sec_{\ge 2n-5}^{n-4}(C)$ is empty for $n\ge4$.
\end{enumerate}
\end{lemma}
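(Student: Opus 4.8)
The plan is to understand everything through the projection description of Subsection \ref{via_projection}: pick a generic $C\in\DP$ and translate each statement into a statement about the position of the center of projection $\L$ (an $(n-2)$-plane in $P^{2n-2}$) relative to the rational normal curve $\G$ and the secant planes $W_D$. By Lemma \ref{syzygie}, $C\in\DP$ means $\Sec_{\ge2n-3}^{n-3}(C)\ne\varnothing$, so some $(n-3)$-secant meets $C$ in a divisor of degree $\ge2n-3$. The key dictionary is Lemma \ref{MtoW}: for $D\in\Sym^{2n-3}(P^1)$, one has $M_D=\pi_\L(W_D)\in\Sec^{n-3}_{2n-3}(C)$ precisely when $\L\subset W_D$, and $M_D\in\Sec^{n-3}_{\ge2n-3}$ with strictly bigger degree would force even more incidences. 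So the stratum of $\DP$ we are sitting in is governed by how many of the $(2n-4)$-dimensional planes $W_D$ contain $\L$, together with the positions of the $W_{D'}$ for divisors $D'$ of degree $2n-2$.

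First I would prove (1). If $M\in\Sec^{n-3}_{\ge2n-2}(C)$, then pulling back through the projection, $\L$ would be contained in $W_D$ for some $D\in\Sym^{2n-2}(P^1)$; but $W_D$ for a degree-$(2n-2)$ divisor is a hyperplane $P^{2n-3}$ meeting $\G$ in $2n-2$ points, i.e. $C=\pi_\L(\G)$ would then actually lie in a hyperplane, meaning $C\in\DPP$. So it would suffice to show that a generic point of $\DP$ is not in $\DPP$ — this follows because $\DPP$ and $\DP$ are both irreducible hypersurfaces (Proposition \ref{D-reduced}) and $\DPP\subsetneq\DP$ (the inclusion $\DPP\subset\DP\cap\DPinf$ is noted after the definitions, and it is proper: a generic syzygy $p_1L_1+p_2L_2=0$ does not force linear dependence of all the $p_i$). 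Alternatively, and more robustly, I would exhibit one explicit $C\in\DP\setminus(\DPP\cup\text{deeper strata})$ — e.g. the example appearing in the proof of Proposition \ref{D-reduced} with components $u^{2n-2}+au^{2n-3}v,\ u^{2n-3}v+u^{2n-4}v^2,\ v^4,\dots,v^{2n-2}$ — and check directly that its only $(n-3)$-secant of degree $\ge2n-3$ has degree exactly $2n-3$ with distinct points in general position; genericity then spreads this to the whole open dense subset of $\DP$ by semicontinuity.

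For (2): a second secant $M'\in\Sec^{n-3}_{2n-3}(C)$ distinct from $M$ would give a second divisor $D'\in\Sym^{2n-3}(P^1)$ with $\L\subset W_{D'}$. Then $\L\subset W_D\cap W_{D'}$. If $D$ and $D'$ share at most $2n-5$ points (which holds unless $D'=D$ since $\deg D=\deg D'=2n-3$, so any two distinct such divisors differ in at least two points), Lemma \ref{W-transversality} — applied with the degree-$(2n-3)$ divisor $D$ and a degree-$(2n-4)$ subdivisor of $D'$ — shows $W_D$ and $W_{D'}$ meet transversely, so $\dim(W_D\cap W_{D'})=2n-5-1=2n-6$... hmm, that is still $\ge n-2$ for $n\ge4$, so this alone does not kill it. The cleaner route: having two independent syzygies is a codimension-$\ge2$ condition inside $\P$, hence its image cannot be the whole hypersurface $\DP$; I would verify the codimension count via Lemma \ref{fundamental-matrix-conditions}(3) (the space of $(C,L)$ with $p\cdot L=0$ maps finitely-to-$\DP$ generically, and two independent $L$'s drop the dimension further), and conclude a generic $C\in\DP$ has a unique such secant. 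Statement (3) is then the statement that for generic $C\in\DP$ the unique divisor $D=C\cdot M$ is reduced with points in general linear position in $M\cong P^{2n-4}$; again this is an open condition, so it follows either from the explicit example above or from the $\phi:PGL(n)\times W\to\DP$ parametrization in the proof of Proposition \ref{D-reduced}, whose generic member manifestly has $p_1,p_2$ with $2n-3$ distinct common roots. Finally (4): $\Sec^{n-4}_{\ge2n-5}(C)\ne\varnothing$ means $\DPPinf$-type behavior; since $\DPPinf\subset\DP\cap\DPinf$ and (by the same syzygy dimension count, now producing an $(n-4)$-secant of high degree needs a rank-drop of codimension $\ge2$) $\DPPinf$ has codimension $\ge2$ in $\P$ for $n\ge4$, a generic point of the hypersurface $\DP$ avoids it.

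The main obstacle I anticipate is (2): ruling out a second $(2n-3)$-secant requires more than the transversality Lemma \ref{W-transversality}, which only controls pairs of planes of complementary-ish dimension; I expect to need the dimension-counting argument on the incidence variety $\{(C,L):p\cdot L=0\}$ (equivalently on $\ker A_C$) to show that "$\dim\ker A_C\ge2$" is codimension $\ge2$ in $\P$, i.e. does not fill the hypersurface $\DP$ where $\dim\ker A_C\ge1$. Equivalently, one uses that the dominant parametrization $\phi$ of $\DP$ from Proposition \ref{D-reduced} has generic fibers of expected dimension, so the extra syzygy locus is a proper subvariety. Everything else is either a routine semicontinuity/open-condition argument reduced to the single explicit curve above, or a direct translation through Lemmas \ref{MtoW} and \ref{syzygie}.
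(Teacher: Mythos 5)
Your overall strategy is the same as the paper's: part (1) via Bezout (a secant of degree $\ge 2n-2$ forces $C$ into a hyperplane, i.e.\ $C\in\DPP$, which a generic point of the irreducible $\DP$ avoids), and parts (2)--(4) by showing that each degenerate locus is a proper closed subvariety of $\DP$ via a dimension count. The problem is that the dimension counts \emph{are} the proof, and you defer all of them (``I would verify\dots'', ``by the same syzygy dimension count\dots''). This is not a harmless omission: in the paper's computation the locus of curves with two disjoint-as-possible $(2n-3)$-secants has dimension $2n^2-n-3$ against $\dim\DP=2n^2-n-2$, i.e.\ codimension exactly one inside $\DP$, so there is no slack and a loose ``corank $\ge 2$ is codimension $\ge 2$'' assertion about the structured matrix $A_C$ genuinely needs the explicit parameter count (done in the paper separately for $\dim(M_1\cap M_2)=n-5$ and $n-4$). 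You correctly abandon the route through Lemma \ref{W-transversality}, but your remaining argument for (2) via fibers of $\phi$ is close to circular: generic injectivity of the secant--curve incidence over $\DP$ is essentially statement (2) itself.

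Your fallback --- one explicit curve plus openness/semicontinuity --- also does not work as written. The curve you cite from the proof of Proposition \ref{D-reduced} is the paper's example of a family on which $\det A_C$ changes sign, i.e.\ it is used precisely because its generic member lies \emph{off} $\DP$; it is not an exhibited point of $\DP$ with a unique well-behaved secant. Moreover, openness \emph{inside $\DP$} of ``exactly one $(2n-3)$-secant, with $C\cdot M$ reduced and in general linear position'' is not established and is not easier than the direct count (secants can collide or increase degree in the closure). Smaller but real slips: $\DPP$ is not an ``irreducible hypersurface'' (linear dependence of the $p_i$ is a codimension-$n$ condition); $M$ is an $(n-3)$-plane, not $P^{2n-4}$; the general linear position of the $2n-3$ points of $D$ in $M$ is a condition on the remaining components $q_3,\dots,q_n$ and does not follow from the roots of $r$ being distinct; and for (4) the locus $\{\Sec^{n-4}_{\ge 2n-5}(C)\ne\varnothing\}$ strictly contains $\DPPinf$, so bounding the codimension of $\DPPinf$ alone would not suffice --- the paper instead counts directly the curves for which three components share $\ge 2n-5$ roots, getting $2n^2-2n<\dim\DP$ for $n\ge 3$.
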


\begin{proof} (1) Note that $\Sec_{\ge 2n-2}^{n-3}(C)$ can not be nonempty for a generic $C\in\DP$, since,
otherwise,
by Bezout theorem,
%such a non-emptyness
this would imply that $C$ is contained in a hyperplane
and, hence, is not generic in $\DP$, as any element of $\DPP$.
%and so, $C\in\DPP$ is not generic (see Subsection \ref{secants}).
%After that proceed by dimension count.

(2) Assume that $\Sec_{2n-3}^{n-3}(C)$ contains
two secants, $M_1$ and $M_2$.
In the case of $\dim (M_1\cap M_2)= n-5$, we can choose coordinates $x_1, \dots, x_n$ in $P^{n-1}$ so that
% in a way that
%$M_1$ is defined by $x_1=x_2=0$, while that $M_2$ by $x_3=x_4=0$.
$$M_1=\{x_1=x_2=0\},\quad M_2=\{x_3=x_4=0\}.$$
In terms of polynomial components $p_1,\dots,p_n$ of $C$
%are polynomial components of $C$,
the condition that
%Since
$M_1, M_2$ are $(2n-3)$-secants means that
%the polynomial
$p_1$ has $2n-3$ common roots with $p_2$, while $p_3$ has $2n-3$ common roots with $p_4$.
This gives
%implies that
the dimension
$$2(2n-3+2\times2)+(n-4)(2n-1)+\dim G(4,n)+2\dim G(2,4)-1=2n^2-n-3
$$
for the space of such curves $C$, which is less than $\dim\DP=\dim\P-1=n(2n-1)-2$.
Therefore, such $C$ is not generic in $\DP$.

If $\dim (M_1\cap M_2)= n-4$, then we choose coordinates so that $M_1$ is defined by $x_1=x_2=0$ and
$M_2$ by $x_2=x_3=0$.
This time the dimension count gives us
$$2n-3+3\times 2 + (n-3)(2n-1)+2\dim G(2,3)+ \dim G(3,n) -1=2n^2-2n,$$
%$2n-3+\times 2 +2+ (n-3)(2n-1)+\dim G(2,3)+ \dim G(3,n) -1=2n^2-2n-1$
which is also strictly less than $\dim \DP$ for $n\ge3$.

(3)
 A similar dimension count in the case of one secant $M\in\Sec_{2n-3}^{n-3}$ gives
$$\dim \DP =2n-3 + 2\times 2 + (n-2)(2n-1) + \dim G(2,n)-1=2n^2-n-2,$$
where the summand $2n-3$ drops if
the divisor $D$ has multiple points and the summand $(n-2)(2n-1)$ drops if
the points of $D$ are not in a general linear position in $M$.

(4) If  $M$ is an element of $\Sec_{\ge 2n-5}^{n-4}(C)$, then in an appropriate coordinate system
the polynomials $p_1, p_2, p_3$ have at least $2n-5$ common roots, and once more the result follows from
a dimension count, which gives us
$2n-5+3\times 4 + (n-3)(2n-1) +\dim G(3, n)-1= 2n^2-2n<\dim \DP$.
\end{proof}

Let us consider some auxiliary cycles in $G(2n-4, 2n-1)$. One of them, $A_\Gamma\in Z_{2n-4}(G(2n-4, 2n-1))$,
is given by the secant plane map
%$\G^{(2n-4)}
$\Sym^{2n-4}(P^1)\to G(2n-4, 2n-1)$, $D\mapsto W_D$, where $W_D\in\Sec_{2n-4}^{2n-5}(\G)$
 stands, as before, for the unique $(2n-5)$-dimensional $(2n-4)$-secant of $\Gamma$ with $W_D\cdot \G=D$.
The other ones, $B_\Lambda\in Z_{4n-8}(G(2n-4, 2n-1))$,
depend on a choice of an $(n-2)$-plane $\Lambda$ in $P^{2n-2}$; they are formed
by $(2n-5)$-planes meeting $\Lambda$ in codimension $\le 1$.
Note that $A_\Gamma$ and $B_\Lambda$ are of complementary dimensions,
$(2n-4)+(4n-8)=\dim G(2n-4, 2n-1)$, and, hence, their homology intersection number $[A_\Gamma]\cdot [B_\Lambda]$ is well defined
and does not depend on $\Lambda$.

If $\L\cap\G=\varnothing$, then  according to Proposition \ref{C-L-correspondence}
and Lemma \ref{MtoW}%\mnote{Kh: ref to Lemma \ref{MtoW} to be checked}
a point $W\in A_\G\cap B_\L$ represents
an element $M=\pi_\L(W)$ of $\Sec^{n-3}_{2n-4}(C(\L))$ if $\L\not\subset W$ and
an element of $\Sec^{n-4}_{2n-4}(C(\L))$ if $\L\subset W$. In what follows, if $W$ is an isolated point of
$A_\G\cap B_\L$ (which is the case, in particular, for each $W\in A_\G\cap B_\L$, if
$C(\L)\notin \DPinf$), then we attribute to $M=\pi_\L(W)$
a positive integer {\it multiplicity} $m(M)$ equal, by definition, to the local intersection number of $[A_\Gamma]$ with $[B_H]$
at the point $W$.

\begin{lemma}\label{transversality-check}
Assume that $C\in\DP$ is a generic point,  $M\in \Sec_{2n-3}^{n-3}(C)$,
and $D=C\cdot M\in\Sym^{2n-3}(P^1)$, so that $M=M_D=\pi_\L(W_D)$ for $\L=\L(C)$.

Then $A_\G\cap B_\L$ is transverse at each of the $(2n-3)$ points $W_{D'}\in G(2n-4,2n-1)$, where
$D'\in \Sym^{2n-4}(P^1)$ is obtained from $D$ by dropping one of its points.
\end{lemma}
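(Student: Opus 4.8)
The plan is to reduce the transversality claim to the linear-algebra transversality statement already proved in Lemma \ref{W-transversality}, combined with the genericity information supplied by Lemma \ref{generic-on-DP}. First I would fix a generic $C\in\DP$, its unique $(2n-3)$-secant $M\in\Sec^{n-3}_{2n-3}(C)$, the associated divisor $D=C\cdot M\in\Sym^{2n-3}(P^1)$ (whose $2n-3$ points are distinct and in general linear position, by Lemma \ref{generic-on-DP}(3)), and $\L=\L(C)$ with $M=\pi_\L(W_D)$. For each of the $2n-3$ divisors $D'\le D$ obtained by deleting one point, $D'\in\Sym^{2n-4}(P^1)$, I would first check that $W_{D'}$ really lies in $A_\G\cap B_\L$: it lies in $A_\G$ tautologically, and it lies in $B_\L$ because $W_{D'}\subset W_D$ and, by Lemma \ref{MtoW}(5) applied to $M\in\Sec^{n-3}_{2n-3}(C)$, $\L\subset W_D$, whence $\dim(W_{D'}\cap\L)\ge\dim\L-1$, i.e.\ $W_{D'}$ meets $\L$ in codimension $\le 1$. (One should also note $\L\not\subset W_{D'}$, since $\L\subset W_{D'}$ together with $\L\subset W_D$ and $\dim W_{D'}=\dim W_D-1$ would force $\L$ to have codimension $1$ in $W_D$, contradicting that a generic such configuration has $\dim\Lambda=n-2$ with $\dim W_D=2n-4$ — for $n\ge 3$ this is codimension $n-2\ge 1$, and equality $n-2=1$ only at $n=3$ where one checks it directly; more robustly, $\L\subset W_{D'}$ would put $M=\pi_\L(W_{D'})$ into $\Sec^{n-4}_{2n-4}(C)$, and by Lemma \ref{generic-on-DP}(4) together with the fact that $D'\subset D$ gives $2n-4\ge 2n-5$ common roots, this is excluded for generic $C$.)

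Next I would compute the relevant tangent spaces at $W_{D'}$. The cycle $B_\L$, being a Schubert-type locus of $(2n-5)$-planes meeting the fixed $(n-2)$-plane $\L$ in codimension $\le 1$ (equivalently, meeting $\L$ at all, since generic $(2n-5)$-planes miss $\L$ when $(2n-5)+(n-2)<2n-2$, i.e.\ $n<5$; in general it is the Schubert cycle $\{W : \dim(W\cap\L)\ge 0\}$ reinterpreted, but the point $W_{D'}$ actually meets $\L$ in codimension exactly $1$), is smooth at $W_{D'}$ provided $\dim(W_{D'}\cap\L)$ attains its generic value on $B_\L$, which I would verify from the genericity of $C$; its tangent space consists of those $\phi\in\Hom(W_{D'},P^{2n-2}/W_{D'})$ that preserve the incidence with $\L$ to first order. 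The cycle $A_\G$ has tangent space at $W_{D'}$ equal to the image of the differential of $D'\mapsto W_{D'}$, which by the osculating-flag description of secants to a rational normal curve is the space of $\phi$ supported on the points of $D'$ in a prescribed way. The assertion ``$A_\G\pitchfork B_\L$ at $W_{D'}$'' then says these two tangent spaces span $T_{W_{D'}}G(2n-4,2n-1)=\Hom(W_{D'},P^{2n-2}/W_{D'})$, equivalently that their annihilators in the dual intersect trivially.

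The key geometric input translating this into Lemma \ref{W-transversality} is the following: a first-order obstruction to transversality would produce a common first-order ``flex direction'' which, in the rational-normal-curve picture, amounts to a $(2n-4)$-plane $W_D'$ through $W_{D'}$ that is itself a $(2n-3)$-secant of $\G$ meeting $\L$ non-transversally — i.e.\ a second divisor $\widetilde D\in\Sym^{2n-3}(P^1)$ with $W_{\widetilde D}\supset W_{D'}$ and $W_{\widetilde D}\cap\L$ of excess dimension. Since $\widetilde D$ shares the $2n-4$ points of $D'$ with $D$ but (if distinct) differs from $D$ in exactly one point, $D$ and $\widetilde D$ would be two $(2n-3)$-divisors disagreeing in a single point, hence $D$ and $D'':=\widetilde D$ satisfy the hypotheses of Lemma \ref{W-transversality} with room to spare, so $W_D$ and $W_{\widetilde D}$ meet transversally in $P^{2n-2}$; but $W_{D'}\subset W_D\cap W_{\widetilde D}$ has dimension $2n-5=\dim W_D+\dim W_{\widetilde D}-(2n-2)$, consistent with transversality, and one then uses that $\L\subset W_D$ (not $\L\subset W_{\widetilde D}$ generically) to derive a contradiction with the excess-intersection assumption on $W_{\widetilde D}\cap\L$. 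If instead $\widetilde D=D$, the obstruction direction is tangent to $A_\G$ already and lies in $B_\L$ only if $W_D\cap\L$ has excess dimension, contradicting the generic-value computation. I expect the main obstacle to be bookkeeping: pinning down precisely the tangent space of $B_\L$ at a point where the intersection with $\L$ has exactly codimension $1$ (a slightly degenerate Schubert condition), and verifying that the genericity of $C$ (via parts (1)–(4) of Lemma \ref{generic-on-DP}) is exactly strong enough to guarantee all the dimension counts used above attain their expected values — in particular that $W_D\cap\L$ and each $W_{D'}\cap\L$ are as small as possible. Once those dimensions are controlled, the transversality follows formally from Lemma \ref{W-transversality} by the argument just sketched.
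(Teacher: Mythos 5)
Your proposal has a genuine gap at its core. The decisive step --- ``a first-order obstruction to transversality would produce a $(2n-4)$-plane through $W_{D'}$ that is itself a $(2n-3)$-secant of $\G$ meeting $\L$ non-transversally'' --- is not justified and, I believe, not true: a common tangent vector to $A_\G$ and $B_\L$ at $W_{D'}$ is a first-order deformation of $W_{D'}$ inside the Grassmannian, and there is no reason it should integrate to an honest secant plane $W_{\widetilde D}$ with $\widetilde D\in\Sym^{2n-3}(P^1)$. Even granting such a $\widetilde D$, Lemma \ref{W-transversality} does not apply to the pair $(D,\widetilde D)$: that lemma concerns a divisor of degree $2n-3$ and one of degree $2n-4$ with \emph{at least two} points of the latter off the former, whereas your $\widetilde D$ has degree $2n-3$ and differs from $D$ in at most one point --- exactly the excluded configuration. (This is consistent with how the paper actually uses Lemma \ref{W-transversality}: in Proposition \ref{Castelnuovo}(4) it handles the intersection points $W_{D'}$ with $D'\not\le D$ via Kleiman, and Lemma \ref{transversality-check} is precisely the complementary case $D'\le D$ that Lemma \ref{W-transversality} cannot reach.) Your dimension check is also off: for two $(2n-4)$-planes in $P^{2n-2}$ the transversal intersection has dimension $2n-6$, not $2n-5$, so a common $(2n-5)$-plane $W_{D'}$ already rules out transversality of $W_D$ and $W_{\widetilde D}$. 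Finally, the promised contradiction ``using that $\L\subset W_D$'' is never derived, and you yourself flag the identification of $T_{W_{D'}}B_\L$ as an unresolved obstacle --- but that identification is the entire content of the lemma.

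The paper's proof proceeds quite differently and you would need its two main ingredients. First, since $\DM$-type transversality is an open condition and $\DP$ is irreducible (Proposition \ref{D-reduced}), it suffices to exhibit a \emph{single} $\L$ (with $\L\cap W_{D'}$ of codimension $1$ in $\L$) for which transversality holds. Second, one writes both tangent spaces concretely: after choosing an affine chart and a complementary subspace $R$ with projection $\pi_R$ parallel to $W_{D'}$, the tangent space to $G(2n-4,2n-1)$ at $W_{D'}$ is $\Hom(W_{D'}^a,R)$, the tangent space to $A_\G$ is cut out by $f(s_i)\in\pi_R(T_{s_i}\G)$ for the $2n-4$ points $s_i$ of $D'$, and the tangent space to $B_\L$ is cut out by $f(q_j)\in\pi_R(\L)$ for $n-2$ points $q_j$ spanning $\L\cap W_{D'}$ together with $s_1,\dots,s_{n-2}$. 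Transversality then reduces to showing that only $f=0$ satisfies all these linear constraints, which the paper verifies for an explicit barycentric choice of the $q_j$ and a generic line $L=\pi_R(\L)$. Your proposal contains neither the reduction to one example nor any workable mechanism for the tangent-space verification, so as it stands it does not prove the lemma.
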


\begin{proof}
Let us choose an affine chart $\C^{2n-2}\subset P^{2n-2}$ generically with respect to $D=\{s_1,\dots, s_{2n-3}\}$ and $\Lambda$.
Consider a linear subspace $R\subset\C^{2n-2}$ intersecting $W_{D'}$, $D'=\{s_1,\dots, s_{2n-4}\}$,
transversally at one point, and denote by $\pi_R: \C^{2n-2}\to R$ the linear projection parallel to $W_{D'}$.
Then, we can naturally identify the tangent $(6n-12)$-dimensional space of $G(2n-4, 2n-1)$ at $W_{D'}$ with
the vector space $\Hom(W_{D'}^a,R)$ formed by affine maps from $W_{D'}^a=W_{D'}\cap \C^{2n-2}$ to $R$.
Its $(2n-4)$-dimensional subspace that is tangent to $A_\G$ at $W_{D'}$ is represented
by $f\in\Hom(W_{D'}^a,R)$ such that $f(s_i)\in L_i$ for $i=1,\dots,2n-4$, where $L_i=\pi_\R(T_{s_i}\G)$.
On the other hand, as soon as we pick $n-2$ points $q_{n-1}, \dots, q_{2n-4}$ in $\L\cap W_{D'}$ in a way that
$s_1, \dots, s_{n-2}, q_{n-1}, \dots, q_{2n-4}$  generate $W_{D'}$,
 the $(4n-8)$-dimensional tangent space to $B_\Lambda$ at $W_{D'}$ is formed by the affine maps $f\in\Hom(W_{D'}^a,R)$
such that $f(s_i)\in R$ for $i=1,\dots,n-2$
and $f(q_j)\in L$ for $j=n-1,\dots,2n-4$, where $L=\pi_R(\L)$
(it is a line since, by Lemma \ref{MtoW},  $\L\cap W_{D'}$ is of codimension $1$ in $\L$).%\mnote{Kh: reference to Lemma \ref{MtoW}}

Under such a choice, the transversality of $A_\G$ and $B_\L$ at $W_{D'}$ means that for a
non-zero map $f\in \Hom(W_{D'}^a,R)$ it is impossible that $f(s_i)\in L_i$ for $i=1,\dots,2n-4$ and $f(q_j)\in L$
for $j=n-1,\dots,2n-4$.
Since transversality is an open condition and $\DP$ is irreducible (see Proposition \ref{D-reduced}), to show that
the transversality in question
holds for generic $\L$ (under the restriction that $\L\cap W_{D'}$ is of codimension $1$ in $\L$, which
is equivalent to $C\in\DP$) it is sufficient
to find just one example of $\L$ (satisfying the restriction on $\L\cap W_{D'}$, but not necessarily generic)
for which transversality holds.

Such $\L$ can be defined as the span of a generic line $L$ in $R$ and
the points $q_j\in W_{D'}$, $j=n-1,\dots,2n-4$, defined (preserving fixed the points $s_i$) by the conditions
$$
\begin{aligned}
s_{n-1}=&\frac1{n-1}(s_1+\dots+s_{n-2}+ q_{n-1})=
%s_{n-1}=
\frac1{n-1}(s_1+\dots+s_{n-3}+ s_{n}+q_{n})= \dots\\
=&\frac1{n-1}(s_1+s_n+ \dots+s_{2n-4}+ q_{2n-4})
\end{aligned}
$$
which guarantee, in particular, that
$s_1, \dots, s_{n-2}, q_{n-1}, \dots, q_{2n-4}$  generate $W_{D'}$.

Then, we get
$$
f(s_{n-1})=\frac1{n-1}(f(s_1)+\dots+f(s_{n-2})+  f(q_{n-1}))
$$
and,  for each $ i=2, \dots, n-2$,
$$
%\lambda_{n-1}r_{n-1}=
f(s_{n-1})=\frac1{n-1}(f(s_1)+\dots+f(s_{n-1-i})+f(s_n)+\dots + f(s_{n-2+i})+  f(q_{n-2+i})).
$$
Taking pairwise consecutive
%\mnote{F: consecutive ? Kh: yes}
differences and using the linear independence of non-zero vectors chosen on the lines
$L, L_k, L_l$ for each pair of $k,l$ (due to independence between $L_k, L_l$ and the genericity of $L$),
we deduce that
$f(s_{n-2})=f(s_n)=0$, $f(q_{n-1})=f(q_n)$, then that $f(s_{n-3})=f(s_{n+1})=0$, $f(q_{n})=f(q_{n+1})$ etc. up to $f(s_2)=f(s_n)=0$, $f(q_{2n-5})=f(q_{2n-4})$.
This implies $f(s_{n-1})=f(s_1)+f(q_{n-1})$, and we deduce from the linear independence of $L, L_{n-1}, L_1$ that $f(s_{n-1})=f(s_1)=f(q_{n-1})=0$.
Thus, $f=0$ and the transversality holds for our
choice of $\L$, as required.
\end{proof}

\begin{proposition}\label{Castelnuovo}
If $n\ge 3$, then:
\begin{enumerate}
\item
The subvarieties $\DP$ and $\DPinf$ are Zariski closed and $\P\sm(\DP\cup\DPinf)\ne\varnothing$.
\item For every $C\in\P\sm(\DP\cup\DPinf)$, the number of $M\in\Sec^{n-3}_{2n-4}(C)$ is finite
%, the multiplicity $m(M)$ is strictly positive for each $M\in\Sec^{n-3}_{2n-4}(C)$
and
\begin{equation}\label{Castelnuovo-formula}
\sum_{ M\in\Sec^{n-3}_{2n-4}(C)} m(M)=\binom{n}2.
\end{equation}
\item
%\mnote{F: edited}
For a generic $C\in\P\sm(\DP\cup\DPinf)$, we have $m(M)=1$ for all $M\in\Sec^{n-3}_{2n-4}(C)$, and so
the latter set contains precisely
%the number of $(2n-4)$-secants of dimension $n-3$ is finite and equal to
$\binom{n}2$ secants.
\item
For a generic point $C\in \DP$, the set $\Sec_{2n-3}^{n-3}(C)$ contains one secant and the set
$\Sec_{2n-4}^{n-3}(C)$ contains $\binom{n}2-(2n-3)$ secants, each secant from
$\Sec_{2n-4}^{n-3}(C)$
has multipllicity $1$ while the unique one, $M$, from $\Sec_{2n-3}^{n-3}(C)$ has $m(M)=2n-3$; furthermore, for the latter,
all the points in $M\cdot C$ are disctinct.
\end{enumerate}
\end{proposition}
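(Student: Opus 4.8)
The plan is to reduce all four assertions to one Schubert–theoretic computation, namely $[A_\G]\cdot[B_\L]=\binom n2$ in $G(2n-4,2n-1)$, and then to transport it to multisecants via Proposition~\ref{C-L-correspondence} and Lemma~\ref{MtoW}, specialising the centre of projection for parts (3) and (4). For (1): $\DP$ is Zariski closed as the zero locus of $\det A_C$, and $\DPinf$ is Zariski closed by applying upper semicontinuity of fibre dimension to the incidence variety over $\P$ of pairs $(C,M)$ along which $\dim\Sec^{n-3}_{2n-4}$ jumps (some care being needed to separate $\Sec^{n-3}_{2n-4}$ from $\Sec^{n-3}_{\ge2n-3}$); the non-emptiness of $\P\sm(\DP\cup\DPinf)$ will fall out of the computation below, with a general $\L$.

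For the count itself, I would identify $\Sym^{2n-4}(P^1)$ with $P^{2n-4}$ (hyperplane class $h$) and introduce the rank-$(2n-4)$ \emph{secant bundle} $\mathcal S$ with fibre $\langle\G(D)\rangle\subset\C^{2n-1}$ over $D$; this span has dimension $2n-4$ for \emph{every} $D$, fat or not, because $\G$ is a rational normal curve, so $\mathcal S$ is a genuine vector bundle. Pushing the structure sequence of the universal divisor $\mathcal D\subset P^1\times P^{2n-4}$ (of bidegree $(2n-4,1)$), twisted by $\mathcal O_{P^1}(2n-2)$, down to $P^{2n-4}$ gives $0\to\mathcal O(-1)^{\oplus3}\to\C^{2n-1}\otimes\mathcal O\to\pi_*(\mathcal O(2n-2)|_{\mathcal D})\to0$, whence $\mathcal S=\bigl(\pi_*(\mathcal O(2n-2)|_{\mathcal D})\bigr)^{*}$ and $c(\mathcal S)=(1+h)^{-3}$. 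A choice of $(n-2)$-plane $\L\subset P^{2n-2}$ gives the trivial rank-$n$ bundle $\mathcal F$ and a bundle map $\mathcal S\to\mathcal F$ which, by Lemma~\ref{MtoW}(2)--(3), drops rank to $\le n-2$ exactly along $A_\G\cap B_\L$ (rank $n-2$ marking an $(n-3)$-dimensional, rank $\le n-3$ an $(n-4)$-dimensional $(2n-4)$-secant). When $C(\L)\notin\DPinf$ this degeneracy locus is finite, so by the Thom--Porteous formula its length is
$$
\Delta^{(2)}_{(n-2)}\bigl((1+h)^3\bigr)=\det\bigl(c_{2+j-i}\bigr)_{1\le i,j\le n-2},\qquad c_k=\binom3k h^k,
$$
which evaluates (it is $s_{(n-2,n-2)}(1,1,1)$) to $\binom n2\,h^{2n-4}$, i.e.\ $\binom n2$; each local length is the local intersection number of $[A_\G]$ with $[B_\L]$, i.e.\ the multiplicity $m(M)$. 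For $C\notin\DP$ no such point has $\L\subset W_D$, so $D\mapsto M_D$ identifies them bijectively with $\Sec^{n-3}_{2n-4}(C)$: this is (2). By projective invariance one may prove (3) for $C=C(\L)$ with $\L$ general; Kleiman--Bertini applied to the smooth $A_\G\cong P^{2n-4}$ and the $\PGL(2n-1)$-homogeneous family $\{g\cdot B_{\L_0}\}$ then makes $A_\G\cap B_\L$ transverse and away from the deeper Schubert strata, so every $m(M)=1$. A general such $\L$ also misses $\G$ and the proper subvariety $\{\L:C(\L)\in\DP\}$ (Lemma~\ref{MtoW}(5)), which yields (3) together with the non-emptiness asserted in (1).

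For (4), let $C\in\DP$ be general; by Proposition~\ref{D-reduced} it lies outside $\DPP$, so $\L=\L_C$ misses $\G$, and Lemma~\ref{generic-on-DP} gives a single $M_D\in\Sec^{n-3}_{2n-3}(C)$ with $D=C\cdot M_D$ reduced, $\Sec^{n-3}_{\ge2n-2}(C)=\varnothing$, and (for $n\ge4$) $\Sec^{n-4}_{\ge2n-5}(C)=\varnothing$. Reading $A_\G\cap B_\L$ through Lemma~\ref{MtoW}, its points $W_{D'}$ split into those with $M_{D'}\in\Sec^{n-3}_{2n-4}(C)$ and those with $M_{D'}=M_D$; a point of the second kind lies in the $(2n-4)$-plane $\widehat W=\pi_\L^{-1}(M_D)$, and $\widehat W\cap\G$ is exactly the reduced divisor $D$ (a $(2n-4)$-plane meets $\G$ in at most $2n-3$ points), so $D'=D\sm\{s_i\}$, giving precisely the $2n-3$ points $W_{D\sm\{s_i\}}$, at each of which $A_\G\cap B_\L$ is transverse by Lemma~\ref{transversality-check}. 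Subtracting their contribution $2n-3$ from $\binom n2$ leaves $\sum_{M\in\Sec^{n-3}_{2n-4}(C)}m(M)=\binom n2-(2n-3)$; and generic smoothness of the generically finite projection $\{(C,M):C\in\DP,\ M\in\Sec^{n-3}_{2n-4}(C)\}\to\DP$ over the irreducible $\DP$ (Proposition~\ref{D-reduced}; one also invokes Lemma~\ref{generic-on-DP} to see $\DP\not\subset\DPinf$), together with the identification of $\Sec^{n-3}_{2n-4}(C)$ with the corresponding part of the scheme $A_\G\cap B_\L$, forces $m(M)=1$ for all $M$ and hence $\#\Sec^{n-3}_{2n-4}(C)=\binom n2-(2n-3)$; the final clause of (4) is Lemma~\ref{generic-on-DP}(3).

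The routine parts are (1) and the Thom--Porteous bookkeeping. The crux is (4): one has to control exactly how many of the $\binom n2$ secants of a nearby general $C'\notin\DP$ degenerate onto the single $(2n-3)$-secant $M_D$ as $C'\to C$, and show the degeneration is maximally non-degenerate --- exactly $2n-3$ of them, contributing $1$ each --- so that the remaining $\binom n2-(2n-3)$ persist as reduced $(2n-4)$-secants. This is precisely the role of the transversality Lemmas~\ref{W-transversality} and~\ref{transversality-check}, and the delicate point is that the required transversality at each $W_{D\sm\{s_i\}}$ must be verified for $C$ general \emph{in the hypersurface} $\DP$, not merely in $\P$.
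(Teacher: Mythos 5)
Your overall route coincides with the paper's: the count is the intersection number $[A_\G]\cdot[B_\L]=\binom n2$ in $G(2n-4,2n-1)$ (the paper simply cites the Castelnuovo virtual count from ACGH where you derive it by Thom--Porteous; that is a legitimate, self-contained alternative), part (3) is Kleiman's theorem for the full $\PGL_{2n-1}$-action exactly as in the paper, and in part (4) you correctly isolate the $2n-3$ points $W_{D\sm\{s_i\}}\subset W_D$ lying over the unique $(2n-3)$-secant and dispose of them with Lemma \ref{transversality-check}.

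The genuine gap is the last step of (4): deducing $m(M)=1$ at the honest $(2n-4)$-secants of a generic $C\in\DP$ from ``generic smoothness of the generically finite projection $\{(C,M)\}\to\DP$''. Generic smoothness applies to the \emph{reduced} incidence correspondence and only yields that the fibres of that reduced scheme over a general $C\in\DP$ are reduced; it says nothing about the local intersection multiplicity of $A_\G$ with $B_{\L_C}$, which is the length of the (possibly non-reduced) scheme $A_\G\cap B_{\L_C}$ at the point in question. The toy example $\{x^2=y\}\to\{y\}$ restricted over the divisor $\{y=0\}$ exhibits the failure mode: the reduced incidence variety over $\{y=0\}$ is a single reduced point mapping isomorphically to its image, yet the multiplicity is $2$. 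What must actually be excluded is that $A_\G$ and $B_{\L_C}$ are tangent at some $(2n-4)$-secant for \emph{every} $C\in\DP$, i.e.\ that a component of the preimage of $\DP$ lies inside the ramification locus of $\M\to\P$; your argument does not address this. The paper's remedy is a second, equivariant application of Kleiman's theorem, this time to the subgroup of $\PGL_{2n-1}$ stabilizing $W_D$: by Lemma \ref{W-transversality} it acts transitively on the open set $Z_D$ of $(2n-5)$-planes transverse to $W_D$, so for generic $g$ in this subgroup the translate $B_{g(\L)}$ (with $g(\L)$ still contained in $W_D$, hence $C'=\pi_{g(\L)}(\G)$ still a point of $\DP$ with the same $(2n-3)$-secant) meets $A_\G$ transversally at every intersection point except those contained in $W_D$ --- and those are precisely the $2n-3$ points already handled by Lemma \ref{transversality-check}. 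You should replace the generic-smoothness step by this argument or an equivalent transversality statement proved for $C$ generic \emph{in} $\DP$.
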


\begin{proof}
In (1), closeness is evident, non-emptyness of $\P\sm\DP$ follows from Proposition \ref{D-reduced},
and $\P\sm\DPinf\ne\varnothing$ follows from the construction in the proof of (3).

The part (2) is a special case of the well-known Castelnuovo virtual count of secants (see, f.e., \cite{CurvesBook}),
except possibly positivity of multiplicities $m(M)$, which
is due to their
definition as local intersection numbers involved in $[A_\Gamma]\cdot[B_H]$.

To prove (3), pick $C\in \P\sm \DP$ and, in accordance with notation from Subsection \ref{via_projection},
identify $P^{n-1}$ with a projective subspace $H\subset (P^{2n-2} \sm \G)$
so that $C\in \P_H$.

By Kleiman's transversality theorem \cite{Kleiman},
there is a dense open set of linear transformations $g\in PGL_{2n-1}(\R)$
making $A_\Gamma$ transversal to $g(B_\Lambda)=B_{g(\Lambda)}$.
For generic $g$ near to the identity, $g(\Lambda)$ is still disjoint from $\Gamma\cup P^{n-1}$, and
$\pi_{g(\L)}(\G)$ is a curve $C_g\in \P_\Gamma$ that has $\Sec^{n-3}_{2n-4}(C_g)$ consisting of
$[A_\Gamma]\cdot[B_{g(\Lambda)}]$ elements, each of multiplicity $1$ due to transversality.
It remains to notice that having multiplicities $m(M)=1$ for all $M\in \Sec^{n-3}_{2n-4}(C_g)$
is an open condition on $C_g$
and that the Castelnuovo count gives $[A_\Gamma]\cdot[B_{g(\Lambda)}]=\binom{n}2$.

To prove (4), we proceed as before picking
a generic  $C\in \DP$,
so that by Lemma \ref{generic-on-DP}(2)-(3), the set $\Sec_{2n-3}^{n-3}(C)$ has only one element, namely, $M_D$
with the divisor $D=M_D\cdot C\in\Sym^{2n-3}(P^1)$ formed by distinct points.
We choose also coordinates in $P^{2n-2}$ so that $C\in \P_\Gamma$.

 Now we apply Kleimans's transversality theorem to
a Zariski open subset $Z_D\subset G(2n-4, 2n-1)$ formed by
$(2n-5)$-dimensional  projective spaces intersecting $W_D$
transversely and 
%the action of 
to
the group $G$ formed by linear projective transformations $g:P^{2n-2}\to P^{2n-2}$ such that $g(W_D)=W_D$.
%, which acts on $Z_D$ transitively by Lemma \ref{W-transversality}.
By Lemma \ref{W-transversality}, $G$ acts on  $Z_D$ transitively.
Kleiman's theorem provides an open dense set in $G$ of $g\in G$ for which $g(B_\L)=B_{g(\L)}$ is
transversal to $A_\Gamma$ at all points of $Z_D$.
%, that is with the only exception of $W\in A_\G\cap B_{g(\L)}$ that are subspaces $W\subset W_D$.
Thus, the transversality becomes achieved at all points of $A_\Gamma\cap B_{g(\Lambda)}$ except those $W\in A_\Gamma\cap B_{g(\Lambda)}$
that are subspaces $W\subset W_D$.

As a result, projecting $\Gamma$ from $g(\L)$ we get $C'$ with $\Sec_{2n-3}^{n-3}(C')$ consisting of one element of multiplicty $2n-3$
(see Lemma \ref{transversality-check}) and $\Sec_{2n-4}^{n-3}(C')$ consisting of
a certain number of elements of multiplicity 1. Due to Castelnuovo formula, this number is equal to $[A_\Gamma]\cdot[B_{g(\L)}]-(2n-3)=\binom{n}2-(2n-3)$.
Once more due to genericity of $\Lambda$ in $W_D$, for
the only $M\in \Sec_{2n-3}^{n-3}(C')$ all the points in $M\cdot C'$ are distinct.
\end{proof}

\subsection{The discriminant in the variety of multi-secants}\label{param-spaces}
In the variety of secants
$$
\MM=\{(C,M,D) | C\in\P\sm(\DPP\cup\DPPinf), M\in\Sec_{\ge2n-4}^{n-3}(C), D\in\Sym^{2n-4}(P^1), D\le M\cdot C \}
$$
we consider the {\it discriminant} (clearly, Zariski-closed)
$$\DM=\{(C,M,D) \in\MM\ |\ M\in\Sec_{\ge2n-3}^{n-3}(C)\}$$
and its complement
$$\M=\MM\sm\DM=\{(C,M,D)\in\MM\ |\ M\in\Sec_{2n-4}^{n-3}(C), D=M\cdot C \}.$$

\begin{proposition}\label{M-non-empty} If $n\ge3$, the variety $\M$ is non-empty.
Its projection
$$\pr_1:\M\to \P\sm(\DPP\cup\DPPinf),\ (C,M,D)\mapsto C,$$
is surjective, proper over $\P\sm(\DP\cup \DPinf)$
%\mnote{Kh: statement corrected; proof rewritten}
and sends $\DM$ onto $\DP\sm(\DPP\cup\DPPinf)$.
\end{proposition}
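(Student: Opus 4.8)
The plan is to read $\pr_1$ as the projection from the whole space $\MM=\M\cup\DM$ (this is forced by the statement, which speaks of the image of $\DM$, and $\DM\not\subset\M$), and to deduce everything from Proposition~\ref{Castelnuovo} and Lemma~\ref{syzygie}. For non-emptiness of $\M$, I would take a curve $C\in\P\sm(\DP\cup\DPinf)$, which exists by Proposition~\ref{Castelnuovo}(1) and automatically lies outside $\DPP\cup\DPPinf$ (both of these sit inside $\DP$); by Proposition~\ref{Castelnuovo}(2) the set $\Sec_{2n-4}^{n-3}(C)$ is finite with $\sum m(M)=\binom n2\ge3$ and all $m(M)\ge1$, hence non-empty, and any $M$ in it gives $(C,M,M\cdot C)\in\M$.

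For the image of $\DM$: if $(C,M,D)\in\DM$ then $\deg(M\cdot C)\ge2n-3$, so a degree $(2n-4)$ sub-divisor is always available, and thus $\pr_1(\DM)=\{\,C\in\P\sm(\DPP\cup\DPPinf):\Sec_{\ge2n-3}^{n-3}(C)\ne\varnothing\,\}$, which by Lemma~\ref{syzygie} is exactly $\DP\sm(\DPP\cup\DPPinf)$. For surjectivity of $\pr_1\colon\MM\to\P\sm(\DPP\cup\DPPinf)$ I would split the target along $\DP$: points of $\DP$ are hit through $\DM$ by what was just said, while for $C\notin\DP$ one has $\Sec_{\ge2n-3}^{n-3}(C)=\varnothing$ (Lemma~\ref{syzygie}) and $\Sec_{2n-4}^{n-3}(C)\ne\varnothing$ — it carries the Castelnuovo count $\binom n2$ when moreover $C\notin\DPinf$ (Proposition~\ref{Castelnuovo}(2)), and has positive dimension when $C\in\DPinf$ by the very definition of $\DPinf$ — so such $C$ are hit through $\M$. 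Hence $\pr_1(\MM)=\pr_1(\M)\cup\pr_1(\DM)=\P\sm(\DPP\cup\DPPinf)$.

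For properness over $\P\sm(\DP\cup\DPinf)$, note first that over this locus $\MM=\M$, since $\Sec_{\ge2n-3}^{n-3}(C)=\varnothing$ there. Since $\MM$ sits inside the product of $\P$ with the (complete) Grassmannian of $(n-3)$-planes in $P^{n-1}$ and with $\Sym^{2n-4}(P^1)$, I would just verify the valuative criterion: a one-parameter family $C_t$ inside $\P\sm(\DP\cup\DPinf)$ equipped with $M_t\in\Sec_{2n-4}^{n-3}(C_t)$ and $D_t=M_t\cdot C_t$ for $t\ne0$ has, by completeness of the Grassmannian, a limit $(n-3)$-plane $M_0$ and a limit $D_0\in\Sym^{2n-4}(P^1)$; writing $M_t$ as the common zero locus of two linear forms and passing to the limit of the resulting degree $(2n-2)$ forms on $P^1$ gives $D_0\le M_0\cdot C_0$, so $\deg(M_0\cdot C_0)\ge2n-4$, whereas $C_0\notin\DP$ together with Lemma~\ref{syzygie} forces $\deg(M_0\cdot C_0)\le2n-4$ (here one also uses $C_0\notin\DPP$, i.e. $C_0$ non-degenerate, to know $C_0^{-1}(M_0)$ is finite rather than all of $P^1$). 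Thus $M_0\in\Sec_{2n-4}^{n-3}(C_0)$ and $D_0=M_0\cdot C_0$, so $(C_0,M_0,D_0)\in\M$ is the unique limit; combined with the finiteness of fibres from Proposition~\ref{Castelnuovo}(2), $\pr_1$ is finite, in particular proper, over $\P\sm(\DP\cup\DPinf)$.

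The hard part will be this last step: the real content is ruling out that the limiting plane $M_0$ secretly absorbs an extra intersection point of $C_0$ (which is precisely what $C_0\notin\DP$ prevents, via Lemma~\ref{syzygie}) and that $C_0^{-1}(M_0)$ degenerates to all of $P^1$ (prevented by non-degeneracy of $C_0$); once the semicontinuity $D_0\le M_0\cdot C_0$ is in place, everything else is formal bookkeeping with the definitions of $\DP,\DPinf,\DPP,\DPPinf$ and with Proposition~\ref{Castelnuovo}.
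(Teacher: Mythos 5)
Your proof is correct and follows the same route as the paper: non-emptiness and the behaviour over $\P\sm(\DP\cup\DPinf)$ come from Proposition~\ref{Castelnuovo}, the image of $\DM$ from Lemma~\ref{syzygie}, and the remaining surjectivity from the definitions of $\DPinf$, $\DPP$, $\DPPinf$. The only difference is that where the paper simply cites Proposition~\ref{Castelnuovo}(2) for properness, you spell out the closedness/valuative-criterion argument it leaves implicit --- semicontinuity $D_0\le M_0\cdot C_0$ in the limit (using $C_0\notin\DPP$ so the pulled-back linear forms do not vanish identically) together with $C_0\notin\DP$ and Lemma~\ref{syzygie} to cap $\deg(M_0\cdot C_0)$ at $2n-4$ --- which is exactly the missing content behind that citation.
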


\begin{proof}
Non-emptiness follows from \ref{Castelnuovo}(1).
Surjectivity of $\pr_1$ over $\DP\sm\DPP $ follows
from Lemma \ref{syzygie}, over $\DPinf\sm\DPPinf$ follows directly from the definitions,
while surjectivity and properness over $\P\sm(\DP\cup \DPinf)$ from \ref{Castelnuovo}(2).
The last claim about $\DM\to\DPinf\sm(\DPP\cup \DPPinf)$ is straightforward from the definitions.

%$\DM$ and surjectivity over $\DPinf\sm \DPPinf$ are straightforward from definitions.
\end{proof}

\begin{lemma}\label{irreducibility} Each of the varieties $\MM$, $\M$ and $\DM$ is irreducible.
The first two of them are non-singular.
\end{lemma}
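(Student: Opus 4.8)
The plan is to exhibit $\MM$ (and hence $\M$, $\DM$) as the total space of an explicit incidence correspondence fibered over an irreducible base, and to read off irreducibility and smoothness from the structure of the fibers. First I would reorganize the data $(C,M,D)\in\MM$ by projecting to the pair $(M,D)$: since $C\in\P\sm(\DPP\cup\DPPinf)$, the curve $C$ is automatically contained in no hyperplane, so $M\in\Sec_{\ge 2n-4}^{n-3}(C)$ together with $D\le M\cdot C$ of degree $2n-4$ is equivalent to the conditions that $M$ is an $(n-3)$-plane in $P^{n-1}$ and that $D$ is a degree-$(2n-4)$ effective divisor on $P^1$ whose image under $C$ lies in $M$. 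The space of pairs $(M,D)$, $M\in G(n-2,n)$ (the Grassmannian of $(n-3)$-planes in $P^{n-1}$) and $D\in\Sym^{2n-4}(P^1)$, is a smooth irreducible projective variety, and over it $\MM$ is the locus of those $C\in\P$ with $C(D)\subset M$, i.e.\ with $p_i$ restricted to the $2n-4$ points of $D$ landing in the two linear equations cutting out $M$. For fixed $(M,D)$ with $D$ reduced this is a linear condition on the coefficient vector of $C=[p_1:\dots:p_n]$, cutting out a linear subspace of $\P$ of the expected codimension; the fiber is therefore a (Zariski-open subset of a) projective space, in particular irreducible of constant dimension on the open dense locus where $D$ is reduced and the $2n-4$ linear conditions are independent.

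Next I would run the standard fibration-gives-irreducibility argument: the morphism $\mathbf{q}:\MM\to G(n-2,n)\times\Sym^{2n-4}(P^1)$, $(C,M,D)\mapsto(M,D)$, has irreducible base and, over a dense open subset of the base, irreducible equidimensional fibers, hence the preimage of that open set is irreducible; and since $\MM$ is the closure of this preimage (every triple in $\MM$ is a limit of triples with $D$ reduced — perturb $D$ within $M\cdot C$, possible because $\Sec^{n-3}_{\ge 2n-4}(C)$ is nonempty and one can move points of the divisor along $C$), $\MM$ itself is irreducible. Irreducibility of $\DM$ follows the same way, or more cheaply from Proposition \ref{M-non-empty}: $\DM$ is identified with the incidence variety over $\DP\sm(\DPP\cup\DPPinf)$, and since $\DP$ is irreducible by Proposition \ref{D-reduced}, an analogous fibration argument (now the generic fiber over $C\in\DP$ is described by Proposition \ref{Castelnuovo}(4) — a single $(2n-3)$-secant $M_D$ together with one of finitely many sub-divisors $D'\le M_D\cdot C$) gives irreducibility of $\DM$. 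Then $\M=\MM\sm\DM$ is a nonempty (Lemma/Proposition \ref{M-non-empty}) Zariski-open subset of the irreducible $\MM$, hence irreducible.

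For non-singularity of $\MM$ and $\M$, the plan is a tangent-space computation at a point $(C,M,D)$ with $D$ reduced and the associated linear conditions independent, i.e.\ a transversality statement: the defining equations ``$C(x)\in M$ for each of the $2n-4$ points $x$ of $D$'', written in the chart where $M=\{x_1=x_2=0\}$, say that $p_1$ and $p_2$ vanish on $D$; the Jacobian of this system with respect to all of $(C,M,D)$ is surjective because one may already vary the coefficients of $p_1,p_2$ freely subject only to these $2(2n-4)/2$... more precisely, for each point $s_j\in D$ the pair of equations $p_1(s_j)=p_2(s_j)=0$ can be corrected independently by varying the coefficients of $p_1,p_2$ (a Vandermonde-type surjectivity, valid since the $s_j$ are distinct), so $\MM$ is smooth of the expected dimension there; smoothness then propagates to all of $\MM$ because $\MM$ is irreducible and the smooth locus is a nonempty open set containing no components' worth of singularities — one must still check smoothness holds at every point, which I would get by noting that the tangent computation only used distinctness of the points of $D$, and that on all of $\M$ the divisor $D=M\cdot C$ is automatically reduced for $C\notin\DPinf$... for the remaining points (where $D$ has multiplicities, or $C\in\DPinf$) I would instead observe $\MM$ is cut out inside a smooth ambient incidence variety by the right number of equations and is irreducible of that expected dimension, hence a complete intersection, and argue smoothness via the Jacobian criterion handling infinitesimal neighborhoods of repeated points by the usual jet-space reformulation. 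The main obstacle I anticipate is exactly this last point: controlling the tangent space at triples $(C,M,D)$ where $D$ is \emph{non-reduced} or where $M$ meets $C$ in a divisor of degree $>2n-4$, since there the naive Jacobian drops rank and one needs the jet-theoretic description of ``$D\le M\cdot C$'' (vanishing to prescribed order) to see that $\MM$ is nonetheless smooth; resolving it cleanly is where the real work lies, and the cleanest route is probably to present $\MM$ as a projective bundle or a bundle of (open subsets of) flag-type varieties over $G(n-2,n)\times\Sym^{2n-4}(P^1)$ directly, bypassing the Jacobian criterion, so that smoothness of base and fiber gives smoothness of the total space for free.
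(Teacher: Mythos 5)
Your treatment of $\MM$ and $\M$ is essentially correct and takes a genuinely different route from the paper. The paper does not fiber over $(M,D)$ at all: it passes to the rational normal curve $\G\subset P^{2n-2}$, maps $(C,M,D)\mapsto(\L_C,D)$ into an auxiliary incidence variety $\Cal N\subset G(n-1,2n-1)\times\Sym^{2n-4}(P^1)$, shows this is a $\PGL_n$-fibration, and then analyses $\Cal N$ by projecting to $\Sym^{2n-4}(P^1)$. Your decomposition over $G(n-2,n)\times\Sym^{2n-4}(P^1)$ is dual to this and more elementary. Moreover, your anxieties about non-reduced $D$ are unfounded and your own ``cleanest route'' at the end is in fact the whole proof: for binary forms, divisibility by $r_D$ (the form with divisor $D$) imposes exactly $2n-4$ independent linear conditions on $\C_{2n-2}[u,v]$ \emph{regardless of multiplicities}, since the solution space is $r_D\cdot\C_2[u,v]$. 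Hence the ambient incidence variety is the projectivization of the kernel of a surjective bundle map over $G(n-2,n)\times\Sym^{2n-4}(P^1)$, i.e.\ a genuine projective subbundle, smooth and irreducible; $\MM$ is a non-empty Zariski-open subset of it, and $\M$ is open in $\MM$. No Jacobian criterion, no ``dense open locus plus closure'' argument, and no jet-space discussion is needed.

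The genuine gap is in your argument for $\DM$. The alternative you offer ``more cheaply from Proposition \ref{M-non-empty}'' — that $\DM$ maps to the irreducible $\DP$ with generic fiber consisting of $2n-3$ points — proves nothing: a finite surjection onto an irreducible base with fibers of cardinality $>1$ is exactly the situation in which irreducibility can fail (a disjoint union of $2n-3$ copies of $\DP$ has the same fiber count), and one would have to establish transitivity of the monodromy on the $2n-3$ sub-divisors, which you do not do. The fallback ``follows the same way'' is also not literally the same way: the fiber of $\DM$ over $(M,D)$ is no longer a linear space but the union over $x\in P^1$ of the linear spaces $\{C: (D+x)\le M\cdot C\}$, so to salvage the argument you must introduce the extra parameter $x$ (the position of the $(2n-3)$rd intersection point), pass to the incidence variety in $(C,M,D,x)$ — which \emph{is} a projective bundle over $G(n-2,n)\times\Sym^{2n-4}(P^1)\times P^1$ — and take the image. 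This is precisely the role played in the paper's proof by the surjection $\D^{\Cal N}\to\Sym^{2n-3}(P^1)\times\G$, $(\L,D)\mapsto(D',D'-D)$, with irreducible fibres; some version of that extra step is unavoidable and is missing from your proposal.
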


\begin{proof} As it follows from Lemma \ref{MtoW},
%\mnote{Kh: additional argument to justify  the correctness of $\phi$ (in your style = by sending to Lemma)}
a relation $\dim(W_D\cap \L_C)=n-3$ holds for every $(C,M,D)\in \MM$.
Therefore, we have the following commutative diagram
$$\begin{CD}
\MM     @>\phi>>                \Cal N\\
@VV\pr_1 V                  @VVV\\
\P\sm(\DPP\cup\DPPinf) @>>> \P\sm(\DPP\cup\DPPinf)/\PGL_n
\end{CD}$$ where
$$\Cal N=\{(\L,D)\in G(n-1,2n-1)\times\Sym^{2n-4}(P^1)\ |\ \L\cap\G=\varnothing, \dim(W_D\cap \L)=n-3\}$$
and $\phi(C,M,D)=(\L_C,D)$.

First of all, note that $\phi:\MM\to \Cal N$ is a fibration with fibers $\PGL_n$,
and that it sends $\DM$ and its complement $\M$ to
$$\D^{\Cal N}=\{(\L,D)\in\Cal N\ |\ \exists D'\in\Sym^{2n-3}(P^1), D\le D', \L\subset W_{D'} \}$$
and its complement $\Cal N^*=\Cal N\sm\D^{\Cal N}$, respectively.

To check surjectivity of $\phi$, we pick $(\L,D)\in\Cal N$ and a projective subspace $H\subset P^{2n-2}$ of dimension
 $n-1$ which is disjoint from $\G$, and note
that $(\pi_\L(\G),\pi_\L(W_D),D)\in\MM$ lies in $\phi^{-1}(\L,D)$:
here, we use Lemma \ref{MtoW}(2)
that guarantees that $\pi_\L(\G)\notin\DPPinf$ since $\L\not\subset W_D$, and the condition $\pi_\L(\G)\notin\DPP$ is satisfied because
$W_D\cup \L$ generates a codimension $2$ subspace of $P^{2n-2}$.
Then, applying Proposition \ref{C-L-correspondence} we get the fibration property of $\phi$ stated above.

Next, note that
the image $\pr_2(\Cal N)$ of
the projection $\pr_2\!:\Cal N\to\Sym^{2n-4}(P^1)$ is Zarisky open in $\Sym^{2n-4}(P^1)$ and that $\pr_2$ provides
a fibration of $\Cal N$ over $\pr_2(\Cal N)$ with fiber
$G(n-2,2n-4)\times(P^n\sm F)$,
where $G(n-2,2n-4)$ stands for the choice
of $W_D\cap \L$ as a subspace of $W_D$ with given $D\in \Sym^{2n-4}(P^1)$,
$P^n$ stands for the choice of $\L$ with given hyperplane $W_D\cap \L$,
and $F$ is a Zariski-closed subset of $P^n$ determined by the conditions
%\mnote{Kh: $=\emptyset$ replaced by $\neq\emptyset$; $\subsetneq$ replaced by $\subset$}
$\L \subset W_D$ and  $\L\cap\Gamma\neq\emptyset$.
Since the base and the fiber are nonsingular and irreducible,  $\Cal N$
is non-singular and irreducible, as well as its Zariski-open subset $\Cal N^*$.
Because $\phi$ is a fibration with non-singular and irreducible fibers,
this implies that $\MM$ and $\M$ are non-singular and irreducible too.

Similarly, the irreducibility of $\DM$ follows from
that of $\D^{\Cal N}$.
Due to Lemma \ref{MtoW}
and the definition of ${\Cal N}$, the projective envelope of
$\L \cup W_D$ is of dimension $2n-4$, and by this reason the divisor $D'$ appearing in the definition of  $\D^{\Cal N}$
is unique. Thus, we have a well defined regular map
${\D^\Cal N}\to \Sym^{2n-3}(P^1)\times \G$, $(\L,D)\mapsto (D',D'-D)$.
It is surjective and has irreducible fibers that are open subsets of $G(n-1,2n-3)$
defined by the conditions
%\mnote{Kh: $2n-2$ replaced by $2n-3$}
$\L\subset W_{D'}$, $\L\cap\G=\varnothing$, and $\L\not\subset W_D$.
%\mnote{Kh: obscure and needless  "for each of the divisors $D\in\Sym^{2n-2}(\G)$, $D\le D'$" is closed.}
% for each of the divisors $D\in\Sym^{2n-2}(\G)$, $D\le D'$.
\end{proof}

\begin{proposition}\label{codimensiontwo}
The subvariety $\DPinf\subset\P$ has codimension $\ge2$.
\end{proposition}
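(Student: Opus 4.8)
I would deduce the bound from the geometry of the projection $\pr_1:\M\to\P\setminus(\DPP\cup\DPPinf)$, using that $\M$ is irreducible (Lemma \ref{irreducibility}) and that $\pr_1$ is surjective and proper with finite fibres over the dense open set $\P\setminus(\DP\cup\DPinf)$ (Propositions \ref{M-non-empty} and \ref{Castelnuovo}). Two preliminary observations are needed. First, $\dim\M=\dim\P$: surjectivity of $\pr_1$ gives $\dim\M\ge\dim\P$, while $\pr_1$ restricted to the open subset $\pr_1^{-1}(\P\setminus(\DP\cup\DPinf))$ is proper with finite fibres, so that open subset has dimension $\dim\P$, and irreducibility of $\M$ forces $\dim\M=\dim\P$. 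Second, by the very definition of $\M$, for every $C\in\P\setminus(\DPP\cup\DPPinf)$ the fibre $\pr_1^{-1}(C)$ is in bijection with $\Sec^{n-3}_{2n-4}(C)$; hence $\dim\pr_1^{-1}(C)=\dim\Sec^{n-3}_{2n-4}(C)$, which is $\ge1$ precisely when $C\in\DPinf$.

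Before the main argument I would dispose of those components of $\DPinf$ that happen to lie inside $\DPP\cup\DPPinf$. For these an elementary dimension count of the kind carried out in Lemma \ref{generic-on-DP} is enough: $\DPP$ is the determinantal locus of $n$-tuples with linearly dependent components, of codimension $n$ in $\P$; and $\DPPinf$, described via Lemma \ref{MtoW}(3) and Proposition \ref{C-L-correspondence} as the locus of $C=\pi_\L(\G)$ with $\L\subset W_D$ for some $D\in\Sym^{2n-4}(P^1)$, has codimension $n+1$ in $\P$. Both are $\ge2$ for $n\ge3$, so any component of $\DPinf$ contained in $\DPP\cup\DPPinf$ already has codimension $\ge2$. (For $n=2$ one has $\DPinf=\varnothing$, so assume $n\ge3$ from now on.)

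The core of the proof is to exclude a codimension-$1$ component of $\DPinf$ not contained in $\DPP\cup\DPPinf$. Suppose $Z$ is such a component (note $\DPinf$ is Zariski closed by Proposition \ref{Castelnuovo}(1), so this makes sense). Then $Z^{\circ}:=Z\cap(\P\setminus(\DPP\cup\DPPinf))$ is dense open in $Z$, hence irreducible, of codimension $1$ in $\P$. Set $Y:=\pr_1^{-1}(Z^{\circ})$, a closed subset of $\M$. Since $Z\subset\DPinf$, the second observation shows that the surjection $\pr_1|_Y:Y\to Z^{\circ}$ has all fibres of dimension $\ge1$; by the standard fibre-dimension inequality over an irreducible base this gives $\dim Y\ge\dim Z^{\circ}+1=\dim\P=\dim\M$. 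As $Y$ is a closed subset of the irreducible variety $\M$ of full dimension, $Y=\M$, and therefore $\pr_1(\M)\subseteq Z$. But $\pr_1(\M)=\P\setminus(\DPP\cup\DPPinf)$ by Proposition \ref{M-non-empty}, which is not contained in the proper closed subset $Z$ — a contradiction. Hence every component of $\DPinf$ has codimension $\ge2$.

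I expect the only genuinely delicate point to be the inequality $\dim Y\ge\dim Z^{\circ}+1$ when $Y$ may be reducible: one must verify that, after shrinking $Z^{\circ}$ to a dense open subset, some single irreducible component of $Y$ dominates $Z^{\circ}$ and carries the $1$-dimensional part of the general fibre, so that this one component already has dimension $\ge\dim Z^{\circ}+1$. Apart from this, only the routine codimension estimates for $\DPP$ and $\DPPinf$ must be spelled out in detail.
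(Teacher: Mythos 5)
Your argument is correct and is essentially the paper's own: both proofs rest on the irreducibility of the multisecant incidence variety (Lemma \ref{irreducibility}) together with the observation that a codimension-one component of $\DPinf$ would carry positive-dimensional fibres of $\pr_1$ and hence produce a full-dimensional proper closed subset, contradicting irreducibility and surjectivity. The only (immaterial) differences are that you work with $\M$ rather than $\MM$ and dispose of the locus $\DPP\cup\DPPinf$ by direct parameter counts, whereas the paper uses the inclusion $\DPP\cup\DPPinf\subset\DP\cap\DPinf$ together with Proposition \ref{Castelnuovo}(4) to see that $\DPinf\cap\DP$ has codimension $\ge 2$; the fibre-dimension subtlety you flag is handled by the standard semicontinuity argument and is left equally implicit in the paper.
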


\begin{proof}
Proposition \ref{Castelnuovo}(1) implies that the codimension of $\DPinf$ in $\P$ is at least $1$,
while according to Proposition \ref{Castelnuovo}(2) the fibers of the projection $\pr_1\!:\MM\to \P\sm(\DPP\cup\DPPinf)$
over $\P\sm(\DP\cup\DPinf)\subset  \P\sm(\DPP\cup\DPPinf)$ are finite and non-empty, and according to Proposition \ref{Castelnuovo}(4)
the codimension of $\DPinf\cap \DP$ in $\P$ is $\ge 2$. Therefore, if $\DPinf$ had codimension $1$, then
$\DPinf\sm \DP$ would have codimension $1$, which would imply that $\MM$ is reducible, but it is not so
due to Lemma \ref{irreducibility}.
\end{proof}

\section{La strada di Segre}\label{def-S}

\subsection{Pencils of binary quadratic forms}\label{binary}
For any $(C,M,D)\in\MM$, we consider the pencil of hyperplanes $H_t\subset P^{n-1}$ such that $M\subset H_t$, $t\in P^1$.
The condition $C\notin\DPP$ guaranties that the intersections $H_t\cap C$ give a pencil of degree $2n-2$ divisors $D_t\in\Sym^{2n-2}(P^1)$,
$D_t\ge D$.
We can write $D_t=D+D^r_t$, where $\{D^r_t\}$ is the {\it residual pencil} of degree $2$ divisors.
In the case of $(C,M,D)\in \M$, the residual pencil $\{D^r_t\}$ is basepoint-free and so defines a double covering $P^1\to P^1$.
Its deck transformation and the two branch points, alias the fixed points of the deck transformation, will be called the {\it Segre involution} and {\it Segre points}, respectively.

The residual pencil of divisors $\{D^r_t\}$
can be seen
as a point
on the projective plane $P(V^*)$ where $V^*$ is dual to $V=H^0(P^1,\Cal O(2))$.
This yields
a map ${\Psi}:\MM\to P(V^*), (C,M,D)\mapsto\{D^r_t\}.$
By composing $\Psi$  with a polarity isomorphism
$\pol: P(V^*)\to P(V)$ we get a map $\widehat{\Psi}=\pol\circ\Psi :\MM\to P(V)$.

Recall that a {\it polarity isomorphism} identifies
a projective plane with its dual, via an auxiliary non-singular conic on the plane.
In our case, such a conic is the rational normal curve $\G\subset P(V^*)$ (cf.  Subsection \ref{via_projection})
whose points represent those pencils $\{D^r_t\}\in P(V^*)$ that have
a basepoint.
Note also that the projective plane $P(V)$
is canonically identified with $\Sym^2(P^1)$  so that
the conic polar to $\G$ becomes  the diagonal
$\D_2=\{\{x,x\}\ x\in P^1\}$.
In this terms,
$\widehat\Psi(C,M,D)\in P(V)=\Sym^2(P^1)$
is the pair of the Segre points
of $\{D^r_t\}$, if $\{D^r_t\}\in P(V^*)\sm\G$, or, equivalently, if $\widehat\Psi(C,M,D)\in P(V)\sm \D_2$.

\begin{lemma}\label{discr-criterion}
For any $(C,M,D)\in\MM$, the condition $(C,M,D)\in\DM$ is equivalent to $\widehat\Psi(C,M,D)\in \D_2$.
Furthermore, if $\widehat\Psi(C,M,D)=\{x,x\}\in\Sym^2(P^1)=\D_2$, then $x$ is a basepoint of the pencil $\{D^r_t\}$.
\end{lemma}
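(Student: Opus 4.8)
The plan is to unwind the definitions carefully and reduce the statement to a basepoint-count for the residual pencil. Recall that $(C,M,D)\in\DM$ means $M\in\Sec^{n-3}_{\ge 2n-3}(C)$, i.e. $D'=M\cdot C$ has degree $\ge 2n-3$, so that after subtracting the fixed divisor $D$ of degree $2n-4$ the residual pencil $\{D^r_t\}$ of degree $2$ divisors has at least one common point (a basepoint). Conversely, if $\{D^r_t\}$ is basepoint-free, then $(C,M,D)\in\M$. So the content of the first assertion is precisely the equivalence
\[
\{D^r_t\}\text{ has a basepoint}\iff \widehat\Psi(C,M,D)\in\D_2,
\]
and the second assertion pins down \emph{which} basepoint it is.

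First I would set up the linear-algebra picture. The residual pencil $\{D^r_t\}$, $t\in P^1$, is a line in $P(V^*)$, where $V=H^0(P^1,\Cal O(2))$; call this line $\ell$. By construction $\Psi(C,M,D)$ is the point of $P(V^*)$ corresponding to $\ell$ (viewing a line in $P(V^*)$ as a point of $P(V^*)$ in the standard way for a $2$-dimensional ambient projective plane, i.e. $\ell$ \emph{is} a point of the dual $(P(V^*))^*=P(V)$, but the map is normalized so its target is written $P(V^*)$ via an identification — here I would just follow the paper's normalization and take $\Psi(C,M,D)=\ell$ regarded appropriately). The key geometric fact, already recorded in Subsection \ref{via_projection}, is that the points of $\G\subset P(V^*)$ are exactly the pencils $\{D^r_t\}$ admitting a basepoint: under the polarity $\pol$, $\G$ goes to the diagonal $\D_2\subset P(V)=\Sym^2(P^1)$, and a point of $\Sym^2(P^1)$ lies on $\D_2$ precisely when it is of the form $\{x,x\}$. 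So the equivalence will follow once I show:
\[
\{D^r_t\}\text{ has a basepoint}\iff \Psi(C,M,D)\in\G\iff \widehat\Psi(C,M,D)=\pol(\Psi(C,M,D))\in\D_2,
\]
where the middle equivalence is immediate from the description of $\G$ just cited, and the last from the definition of $\pol$ as the polarity attached to $\G$ (so $\pol$ restricts to the identification $\G\to\D_2$). Thus the only thing requiring an argument is the first equivalence, and the second sentence of the lemma will drop out of tracking the point through these identifications: if $\widehat\Psi=\{x,x\}$, then $\Psi(C,M,D)=\pol^{-1}(\{x,x\})$ is the point of $\G$ corresponding to the divisor $2x$, which by the very meaning of "$\G$ parametrizes pencils with a basepoint" is the pencil whose common basepoint is $x$.

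So the main (and essentially only) obstacle is to prove carefully that $\{D^r_t\}$ has a basepoint if and only if the line $\ell\subset P(V^*)$ it spans meets (equivalently, lies on, since $\ell$ \emph{is} a point of the relevant plane) the curve $\G$. I would argue it directly: a basepoint of $\{D^r_t\}$ is a point $x\in P^1$ lying in $D^r_t$ for all $t$. Since $D^r_t = H_t\cdot C - D$ and $H_t$ runs over the pencil of hyperplanes through $M$, the point $x$ is a basepoint of $\{D^r_t\}$ iff $C(x)$ lies on every $H_t\supset M$, i.e. iff $C(x)\in M$ but $C(x)\notin D$ (as a point of the scheme $M\cdot C$, $C(x)$ has residual multiplicity $\ge1$ over $D$ for every hyperplane, forcing $\deg(M\cdot C)\ge 2n-3$), which is exactly the condition $M\in\Sec^{n-3}_{\ge2n-3}(C)$, i.e. $(C,M,D)\in\DM$. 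On the $P(V^*)$ side, the same point $x$ corresponds to the point $\G(x)$, and "$x$ is a common point of all divisors in the pencil $\ell$" translates, under the embedding $V\hookrightarrow\Cal O(2)$ and the evaluation pairing, into "$\G(x)$ lies in the span of the pencil", i.e. $\G(x)\in\ell$. Hence $\ell\cap\G\ne\varnothing$ iff there is a basepoint, which closes the circle. I expect the bookkeeping of which plane is primal and which is dual (and the precise normalization of $\Psi$ so its values land in $P(V^*)$ rather than $P(V)$) to be the only fiddly point; once the evaluation/duality pairing is written down explicitly it is a one-line verification. Everything else is a direct translation through the identifications $P(V)\cong\Sym^2(P^1)$, $\G\leftrightarrow\D_2$ set up just before the lemma.
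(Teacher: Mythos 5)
Your argument is correct and follows essentially the same route as the paper's proof: unwind the definition of $\DM$ to the statement that the residual pencil acquires a basepoint (of degree $1$, thanks to $C\notin\DPP$), identify ``has a basepoint'' with $\Psi(C,M,D)\in\G$, and use that the polarity carries $\G$ to $\D_2$, sending the pencil with basepoint $x$ to $\{x,x\}$, which also gives the second assertion. The only difference is that you spell out the duality bookkeeping and the multiplicity count in $M\cdot C$ more explicitly than the paper does.
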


\begin{proof}
By definition, $(C,M,D)\in\DM$ if and only if $D< M\cdot C$
which, in its turn, holds if and only if the residual pencil $D^r_t=\Psi(C,M,D)$ contains a basepoint
(namely, $M\cdot C-D$, which is a divisor of degree $1$, since $C\notin\DPPinf$).
The latter means that $\Psi(C,M,D)\in\G$,
and thus, $\widehat\Psi(C,M,D)\in\D_2$. The second statement holds since for each $\{x,x\}\in\Sym^2(P^1)=\D_2$
the pencil $\pol^{-1}\{x,x\}$ seen as a line in $P(V)$ is formed by $p\in H^0(P^1,\Cal O(2))$ vanishing at $x$.
\end{proof}

\begin{proposition}\label{wall-transversality}
The map $\widehat\Psi:\MM\to P(V)$ is transverse to
$\D_2$ at generic points of $\DM$.
\end{proposition}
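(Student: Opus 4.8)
The plan is to push the statement down to the quotient $\Cal N$ and there exhibit $\widehat\Psi$ as a smooth morphism followed by a projective isomorphism. Recall from the proof of Lemma \ref{irreducibility} the $\PGL_n$-fibration $\phi:\MM\to\Cal N$; since the residual pencil $\{D^r_t\}$, hence $\widehat\Psi(C,M,D)$, only involves divisors on the source $P^1$, the map $\widehat\Psi$ is constant along the fibres of $\phi$ and descends to a morphism $\bar\Psi:\Cal N\to P(V)$ with $\widehat\Psi=\bar\Psi\circ\phi$. As $\phi$ is a surjective submersion and $\DM=\phi^{-1}(\D^\Cal N)$ (Lemma \ref{irreducibility}, together with Lemma \ref{discr-criterion}), the map $\widehat\Psi$ is transverse to $\D_2$ at a point of $\DM$ exactly when $\bar\Psi$ is transverse to $\D_2$ at its image in $\D^\Cal N$. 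Since transversality is an open condition and $\D^\Cal N$ is irreducible (Lemma \ref{irreducibility}), it is enough to prove transversality of $\bar\Psi$ to $\D_2$ at one point of $\D^\Cal N$ lying over a generic $D_0\in\Sym^{2n-4}(P^1)$.

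Fix such a $D_0$ and restrict to the fibre $\Cal N_{D_0}=\pr_2^{-1}(D_0)$, which by Lemma \ref{irreducibility} is smooth and isomorphic to $G(n-2,2n-4)\times(P^n\sm F)$. For $(\L,D_0)\in\Cal N_{D_0}$ put $P=\mathrm{span}(\L,W_{D_0})$, a $(2n-4)$-plane through $W_{D_0}$ by Lemma \ref{MtoW}; pulling the pencil $\{H_t\supseteq M\}$ of hyperplanes of $P^{n-1}$ back to $P^{2n-2}$ as in Subsection \ref{binary} identifies the residual pencil with $\{(\tilde H\cdot\G)-D_0 : \tilde H\ \text{a hyperplane of}\ P^{2n-2},\ \tilde H\supseteq P\}$. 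Thus $\bar\Psi$ restricted to $\Cal N_{D_0}$ factors as $\sigma:\Cal N_{D_0}\to\Pi_{D_0}$, $\L\mapsto\mathrm{span}(\L,W_{D_0})$, followed by $\rho:\Pi_{D_0}\to P(V)$ sending a plane $P$ to the pair of Segre points of the pencil above, where $\Pi_{D_0}\cong P^2$ is the plane of $(2n-4)$-planes through $W_{D_0}$. Here $\sigma$ is a smooth morphism: being equidimensional between smooth varieties with fibres open in a Grassmannian it is flat by miracle flatness, hence smooth (equivalently, $\mathrm{span}(\L,W_{D_0})$ depends only on $\L$ modulo $W_{D_0}$, so after forgetting the choice of $\L\cap W_{D_0}$ it becomes a linear projection $P^n\to P^2$ whose centre lies in the already excluded locus $F$). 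And $\rho$ is a projective isomorphism: it is the composition of the canonical identification of $P\in\Pi_{D_0}$ with the pencil $\{\tilde H\supseteq P\}$ of hyperplanes of $P^{2n-2}$ through $W_{D_0}$; of the isomorphism induced on such pencils by the map $\tilde H\mapsto(\tilde H\cdot\G)-D_0$ — a projective isomorphism from the plane of hyperplanes through $W_{D_0}$ onto $P(V)$ because $\G$ is a rational normal curve of degree $2n-2$ (so $\tilde H\cdot\G\ge D_0$ always and $\tilde H\mapsto\tilde H\cdot\G$ is linear and injective), whence on pencils it is an isomorphism onto $P(V^*)$ — and of the polarity $\pol:P(V^*)\to P(V)$.

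Consequently $\bar\Psi|_{\Cal N_{D_0}}=\rho\circ\sigma$ is smooth, so $d\bar\Psi$ is surjective at every point of $\Cal N_{D_0}$ and, a fortiori, $\bar\Psi$ is transverse to $\D_2$ at every point of $\Cal N_{D_0}\cap\D^\Cal N$. Finally $\D^\Cal N\to\Sym^{2n-4}(P^1)$ is dominant (for any $D$ and any point $s$, the divisor $D'=D+[s]$ witnesses $(\L,D)\in\D^\Cal N$ for a suitable $\L$), so, $\D^\Cal N$ being irreducible, the points of $\D^\Cal N$ over a generic $D_0$ form a dense open subset, on which $\bar\Psi$ is transverse to $\D_2$; pulling back along the submersion $\phi$ gives transversality of $\widehat\Psi$ to $\D_2$ on a dense open subset of $\DM$. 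The one genuinely geometric input — and the crux of the argument — is the identification of $\rho$ with a projective isomorphism, that is, the fact that the residual pencil depends projectively linearly on the plane $P$; the remaining steps are bookkeeping with the fibrations $\phi$, $\pr_2$ and elementary dimension counts.
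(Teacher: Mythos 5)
Your proof is correct, but it follows a genuinely different route from the paper's. Both arguments begin the same way, using the irreducibility of $\DM$ and the openness of transversality to reduce to a single point; after that they diverge. The paper produces one explicit one-parameter family: starting from a generic $C_0\in\DP\sm\DPinf$ it moves a single root of $p_1$ and checks by hand that the resulting path $\widehat\Psi(C_t,M,D)$ is a line in $P(V)$ meeting the conic $\D_2$ at two distinct points ($t=0$ and $t=c-a$), hence transversally. You instead descend $\widehat\Psi$ along the $\PGL_n$-fibration $\phi$ to $\Cal N$ and, on each fibre $\Cal N_{D_0}$ of $\pr_2$, factor it as the linear projection $\L\mapsto\mathrm{span}(\L,W_{D_0})$ (a submersion away from its centre, which is already excluded from $\Cal N$) followed by the projective isomorphism $P\mapsto\pol(\{\tilde H\cdot\G-D_0:\tilde H\supseteq P\})$, the latter because $\tilde H\supseteq W_{D_0}$ forces the corresponding form to be divisible by the form cutting out $D_0$, and division by that form is linear. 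This shows $\widehat\Psi$ is a submersion everywhere, so transversality to the smooth conic $\D_2$ holds at \emph{every} point of $\DM$, a stronger conclusion that in fact makes the appeal to irreducibility of $\DM$ dispensable and isolates the real geometric content (linearity of the residual-pencil map in the codimension-two plane $P$). Two cosmetic quibbles: the phrase ``the points of $\D^{\Cal N}$ over a generic $D_0$ form a dense open subset'' is loose --- a single fibre is not dense, only the union over a dense open set of $D_0$'s is, though one point suffices anyway; and the miracle-flatness remark for $\sigma$ is superfluous, since the parenthetical identification with a linear projection already gives smoothness directly. Neither affects the validity of the argument.
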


\begin{proof}
Note that transversality even at one point of $\DM$ guarantees transversality at a generic point of
$\DM$, since $\DM= \widehat\Psi^{-1}(\D_2)$ (see Lemma \ref{discr-criterion}), $\DM$ is irreducible (see Lemma \ref{irreducibility}),
and transversality is an open condition.

We start with a generic point of $\DP\sm \DPinf\subset\P$ represented by a curve $C_0=[p_{1,0}:\dots:p_{n,0}]$
and define its variation $C_t=[p_{1,t}:\dots:p_{n,t}]$ in a particular way.
Namely, pick a multisecant $M\in\Sec^{n-3}_{2n-3}(C_0)$ (see Lemma \ref{generic-on-DP}(2) for its existence and uniqueness) and choose coordinates $[x_1:\dots:x_n]$ in $P^{n-1}$ so that
$M=\{x_1=x_2=0\}$. Then, the polynomials $p_{1,0}$ and $p_{2,0}$ have  $2n-3$ common roots forming the divisor
$M\cdot C_0$. Take a divisor $D < M\cdot C_0$ of degree $2n-4$ obtained by dropping one of these roots and factorize the polynomials as
$p_{1,0}=(u-av)(u-bv)r$ and $p_{2,0}=(u-av)(u-cv)r$
%$p_{i,0}=q_{i,0}r$, $i=1,2$,
where the common factor $r$ has $D$ as the zero divisor and
%$q_{i,0}$ are binary quadratic forms with a common root. Let us write the latter ones as $q_{1,0}=(u-av)(u-bv)$ and $q_{2,0}=(u-au)(u-cv)$,
$a,b,c\in\C$ are pairwise distinct.

We define a variation $C_t$, $t\in\C$,  by letting $p_{1,t}=
%q_{1,t}r=
(u-(a+t)v)(u-bv)r$, and leaving  unchanged $p_{i,t}=p_{i,0}$ for $i=2,\dots,n$.
Then $M\in\Sec^{n-3}_{2n-4}(C_t)$ for all $t$, and  $M\cdot C_t=D$ for $t\ne0$.
According to this and a generic choice of $C_0\in \DP\sm \DPinf$,
%Therefore, $(C_t,M,D)\in \MM$ and moreover,
we get $(C_0,M,D)\in \DM$ and $(C_t,M,D)\in \M$ for $t\ne0$.
The points $\widehat \Psi(C_t,M,D)$ form a line  (linearly parameterised by $t$) in $P(V)$ which is transversal to $\D_2$, since it intersects $\D_2$ at two points, $t=0$ and $t=c-a$.
\end{proof}

\subsection{Segre weights and indices}\label{Segre-weight}
In the real setting, the real locus  $(\D_2)_\R$ of the conic $\D_2$ divides the real locus $P(V)_\R$ of the plane $P(V)$
into two connected components, and we introduce the {\it index  function}
$${\rm ind}\!:P(V)_\R\sm (\D_{2})_\R\to\{+1,-1\}
$$
that takes value $1$ in the exterior (M\"obius band component) and $-1$ in the interior
(disc component) of $(\D_2)_\R$.

By Lemma \ref{discr-criterion}, $\Psi(C,M,D)\in P(V)_\R\sm (\D_{2})_\R$ if $(C,M,D)\in\M_\R$, and we define
the {\it Segre weight} of $(C,M,D)$ to be
$$\Sw(C,M,D)={\rm ind}(\Psi(C,M,D)).$$
If $\Sw(C,M,D)=1$ (that is, if $\Psi(C,M,D)$ belongs to the exterior of $(\D_2)_\R$), we call $M$, and its residual pencil, {\it hyperbolic}.
If $\Sw(C,M,D)=-1$ (that is, if $\Psi(C,M,D)$ belongs to the interior), we call them {\it elliptic}. By virtue of identification of
$P(V)$ with $\Sym^2(P^1)$, this definition can be rephrased in terms of Segre points: the residual pencil is hyperbolic, if the both Segre points
 are real, and elliptic, if they are imaginary conjugate.

\begin{lemma}\label{continuity}
$\Sw$ is continuous (locally constant) on $\M_\R$.
\end{lemma}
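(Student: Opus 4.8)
The plan is to realize $\Sw$ as the composite of a continuous map with a locally constant function. Recall from Subsection \ref{binary} the map $\widehat\Psi=\pol\circ\Psi\colon\MM\to P(V)$ (the polarity $\pol$ is a homeomorphism $P(V^*)_\R\to P(V)_\R$ carrying $\G_\R$ to $(\D_2)_\R$, so working with $\Psi$ or with $\widehat\Psi$ is the same). I would first record that $\widehat\Psi$ is continuous on the real locus $\MM_\R$. Indeed, the residual pencil is built from the data $(C,M,D)$ by entirely algebraic operations: since $C\notin\DPP$, the pencil of hyperplanes $H_t\supset M$ cuts on $C$ a pencil $\{D_t\}$ of divisors of degree $2n-2$ depending regularly on $(C,M,D)$, and since $D\le M\cdot C$ with $C\notin\DPPinf$ the residual divisors $D^r_t=D_t-D$ form a genuine pencil of degree-$2$ divisors, i.e.\ a point of $P(V^*)$; composing with $\pol$ gives $\widehat\Psi$. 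Thus $\widehat\Psi$ is a morphism of varieties, and in particular $(\widehat\Psi)_\R\colon\MM_\R\to P(V)_\R$ is continuous.

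Next I would pin down where $(\widehat\Psi)_\R$ sends $\M_\R$. By Lemma \ref{discr-criterion}, $\DM=\widehat\Psi^{-1}(\D_2)$, so $\M=\MM\sm\DM=\widehat\Psi^{-1}\bigl(P(V)\sm\D_2\bigr)$; passing to real points (here $\M_\R=\MM_\R\sm\DM_\R$ because $\DM$ is Zariski closed in $\MM$) we get
$$\M_\R=(\widehat\Psi)_\R^{-1}\bigl(P(V)_\R\sm(\D_2)_\R\bigr).$$
In particular $(\widehat\Psi)_\R$ maps $\M_\R$ into $P(V)_\R\sm(\D_2)_\R$, which is exactly the domain of the index function $\mathrm{ind}$.

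Finally, $\mathrm{ind}$ is by its very definition locally constant on $P(V)_\R\sm(\D_2)_\R$: the real conic $(\D_2)_\R\subset P(V)_\R=\Rp2$ separates the latter into precisely two connected components (the interior disc and the exterior M\"obius band), and $\mathrm{ind}$ is constant, equal to $-1$ resp.\ $+1$, on each. Since $\Sw|_{\M_\R}=\mathrm{ind}\circ(\widehat\Psi)_\R|_{\M_\R}$ is the composite of a continuous map into $P(V)_\R\sm(\D_2)_\R$ with a locally constant function, it is locally constant on $\M_\R$, as claimed. There is no serious obstacle here; the only point requiring (minor) care is the continuous dependence of the residual pencil on $(C,M,D)$, which is precisely what the hypotheses $C\notin\DPP\cup\DPPinf$ and $D\le M\cdot C$ built into the definition of $\MM$ guarantee.
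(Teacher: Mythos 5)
Your proof is correct and is essentially the paper's argument spelled out in more detail: the paper disposes of this lemma in one line by appealing to the continuity of the roots of a polynomial as functions of its coefficients, which is exactly the content of your observation that $\widehat\Psi$ is continuous on $\MM_\R$ and carries $\M_\R$ into $P(V)_\R\sm(\D_2)_\R$, where the index function is locally constant. Your explicit factorization $\Sw=\mathrm{ind}\circ\widehat\Psi|_{\M_\R}$, together with the use of Lemma \ref{discr-criterion} to locate the image, is a faithful (and somewhat more careful) unpacking of the same idea.
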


\begin{proof}
It follows
from continuity of the roots of a polynomial
as functions of the coefficients.
\end{proof}

\begin{cor}\label{pushing-forward} The push-forward of $\Sw$ from $\M_\R$ to $\PR\sm(\DPR\cup\DPinf_\R)$ defined,
for $C\in \PR\sm(\DPR\cup\DPinf_\R)$, by
\begin{equation}\label{function-S}
S^\P(C)=\prod_{M\in\Sec^{n-3}_{2n-4}(C)}\Sw(C,M, M\cdot C)^{m(M)}\in\{+1,-1\},
\end{equation}
{\rm (where $m(M)$ is the multiplicity of $M$ as it appears in the Castelnuovo formula
%$\sum_{M\in\Sec^{n-3}_{2n-4}(C)}m(M)=\binom{n}2$)}
(\ref{Castelnuovo-formula}))}
is continuous
%at each point of
on $\PR\sm(\DPR\cup\DPinf_\R)$.
\end{cor}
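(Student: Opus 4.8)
The plan is to show that the finite product $S^\P(C)$ defining the push-forward is locally constant by reducing it to the already-established local constancy of $\Sw$ on $\M_\R$ together with the fact that the multiplicities $m(M)$ remain controlled under deformation of $C$. The key point is that as $C$ varies in $\PR\sm(\DPR\cup\DPinf_\R)$, by Proposition~\ref{M-non-empty} the projection $\pr_1:\M\to\P\sm(\DPP\cup\DPPinf)$ is proper over $\P\sm(\DP\cup\DPinf)$, so the fiber $\Sec^{n-3}_{2n-4}(C)$ is finite and nonempty, and secants cannot escape to infinity or disappear under small perturbations; they can only split or merge. The total count $\sum_M m(M)=\binom n2$ is constant by Proposition~\ref{Castelnuovo}(2), so all that can happen locally is a redistribution of multiplicity among nearby secants.

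First I would fix $C_0\in\PR\sm(\DPR\cup\DPinf_\R)$ and list its secants $M_1,\dots,M_s\in\Sec^{n-3}_{2n-4}(C_0)$ with multiplicities $m(M_j)$ summing to $\binom n2$. Using the properness of $\pr_1$ over $\P\sm(\DP\cup\DPinf)$, I would choose disjoint neighborhoods $U_j$ of the points $(C_0,M_j,M_j\cdot C_0)$ in $\M_\R$ and a real neighborhood $\Omega$ of $C_0$ such that $\pr_1^{-1}(\Omega)\subset\bigsqcup_j U_j$; this is exactly the statement that every secant of a nearby $C$ lies close to one of the $M_j$. Since the multiplicity $m(M)$ is a local intersection number of $[A_\Gamma]$ with $[B_H]$ (see Subsection~\ref{around-count}), the conservation of intersection numbers under deformation gives $\sum_{M\in U_j\cap\Sec^{n-3}_{2n-4}(C)}m(M)=m(M_j)$ for every $C\in\Omega$.

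Next I would invoke Lemma~\ref{continuity}: $\Sw$ is locally constant on $\M_\R$, so after shrinking $U_j$ we may assume $\Sw\equiv\Sw(C_0,M_j,M_j\cdot C_0)=:\varepsilon_j$ on $U_j$. Then for every $C\in\Omega$,
$$
S^\P(C)=\prod_j\ \prod_{M\in U_j\cap\Sec^{n-3}_{2n-4}(C)}\Sw(C,M,M\cdot C)^{m(M)}
       =\prod_j\varepsilon_j^{\,\sum m(M)}=\prod_j\varepsilon_j^{\,m(M_j)}=S^\P(C_0),
$$
which is the desired local constancy. One small technical care is needed: $\Sw(C,M,M\cdot C)$ is a priori only defined when $M\cdot C$ has degree exactly $2n-4$, i.e.\ when $(C,M,M\cdot C)\in\M_\R$ rather than $\DM_\R$; but $C\in\Omega$ was chosen away from $\DPR\cup\DPinf_\R$, and by Lemma~\ref{syzygie} and the definition of $\DPinf$ this guarantees $\pr_1^{-1}(C)\subset\M_\R$, so the expression is well defined throughout $\Omega$.

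The main obstacle is the bookkeeping of multiplicities under splitting: one must be sure that when a secant $M_j$ of multiplicity $m(M_j)>1$ breaks up into several secants of a nearby curve, the weights $\Sw$ of all the pieces agree with $\varepsilon_j$, so that the product over the cluster equals $\varepsilon_j^{m(M_j)}$. This is precisely what the local constancy of $\Sw$ on the (irreducible, nonsingular by Lemma~\ref{irreducibility}) variety $\M_\R$ delivers, provided the whole cluster stays inside a single connected neighborhood $U_j$ on which $\Sw$ is constant — which is arranged by continuity of roots as in the proof of Lemma~\ref{continuity}. Everything else is routine: properness gives no loss of secants, conservation of intersection numbers gives $\sum m(M)$ constant on each cluster, and the formula for $S^\P(C)$ is manifestly a product of $\pm1$'s depending only on these locally constant data.
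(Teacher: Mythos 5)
Your proof is correct and follows essentially the same route as the paper: both arguments rest on the properness of $\pr_1$ over $\P\sm(\DP\cup\DPinf)$, the local constancy of $\Sw$ from Lemma \ref{continuity}, and a conserved count controlling the exponents. The only cosmetic difference is that you track conservation of the local intersection multiplicities $m(M)$ over clusters of secants, whereas the paper packages the same information as the local $(\bmod\ 2)$-degree of $\pr_1$ restricted to each connected component of $\M_\R$.
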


\begin{proof}
According to Lemma \ref{pushing-forward}, $\Sw$ is constant along the connected components  of $\M_\R$, denoted below
$X_i$, $i=1,2,\dots$.
By Lemma \ref{irreducibility}
and Proposition \ref{M-non-empty}, $\MM$ is non-singular and the projection map $\pr_1\!:\MM\to \P\sm(\DPP\cup\DPPinf)$ is proper over
$\P\sm(\DP\cup\DPinf)$. Thus,
%this push-forward
$S(C)$ is equal to the product of $\Sw(X_i)^{d(\pr_1\vert_{X_i},C)}$
where the product is taken over all
%the connected components of $\M_\R$
$i$ and  $d(\pr_1\vert_{X_i},C)$ denotes the local $(\rm{mod\ 2})$-degree at $C$ of $\pr_1$ restricted to $X_i$. It remains to notice that,
due to properness of $\pr_1$ over $\P\sm(\DP\cup\DPinf)$,
each of $d(\pr_1\vert_{X_\alpha},C)$ is locally constant along  $\P\sm(\DP\cup\DPinf)$.
\end{proof}

\begin{proposition}\label{S-continuity}
The function $S^\P$ defined by (\ref{function-S}) on $\PR\sm(\DPR\cup\DPinf_\R)$
can be extended by continuity to a function $S^\P:\PR\sm\DPR\to\{\pm1\}$.
\end{proposition}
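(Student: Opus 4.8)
The plan is to read Proposition \ref{S-continuity} as a removable-singularity statement for a locally constant, $\{\pm1\}$-valued function across a subset of real codimension at least two. By Corollary \ref{pushing-forward}, the function $S^\P$ is already continuous (locally constant) on $\PR\sm(\DPR\cup\DPinf_\R)$, and the set of points one must adjoin in order to pass from this domain to the larger domain $\PR\sm\DPR$ is exactly
$$(\PR\sm\DPR)\cap\DPinf_\R=\DPinf_\R\sm\DPR.$$
Hence it suffices to show that $S^\P$ extends continuously across this set; note that the formula (\ref{function-S}) itself breaks down there precisely because $\Sec^{n-3}_{2n-4}(C)$ ceases to be finite, so the extension must be produced by continuity rather than by the product formula.

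First I would set up the ambient manifold and the codimension bound. By Proposition \ref{discr-comparison}, $\PR$ is a smooth real manifold; since $\DPR$ is closed in $\PR$, the complement $\PR\sm\DPR$ is again a smooth manifold, and $\DPinf_\R\sm\DPR$ is a closed subset of it. The crucial input is Proposition \ref{codimensiontwo}, which gives $\codim_\C(\DPinf)\ge2$ in $\P$. Passing to real loci does not lower codimension: the real locus of a complex subvariety $Z$ has real dimension at most $\dim_\C Z$, whence
$$\codim_\R(\DPinf_\R)\ge\codim_\C(\DPinf)\ge2$$
in $\PR$, and therefore also in the open submanifold $\PR\sm\DPR$.

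The remaining argument is purely local and topological. Fix any point $C_0\in\DPinf_\R\sm\DPR$ and choose a small ball neighborhood $B\subset\PR\sm\DPR$ of $C_0$. Since $B\cap\DPinf_\R$ is a closed subset of real codimension $\ge2$ in the manifold $B$, its complement $B\sm\DPinf_\R$ is connected. As $S^\P$ is locally constant and takes values in $\{\pm1\}$ on $\PR\sm(\DPR\cup\DPinf_\R)$, it is constant on the connected set $B\sm\DPinf_\R$; I would then define $S^\P(C_0)$ to equal this common value. This prescription is forced, hence independent of the auxiliary ball $B$, so the local definitions patch together into a single continuous (locally constant) extension $S^\P\colon\PR\sm\DPR\to\{\pm1\}$.

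The one step that genuinely needs care is the passage from the complex codimension bound of Proposition \ref{codimensiontwo} to the real codimension bound that powers the connectedness of $B\sm\DPinf_\R$; everything else is the standard principle that a locally constant function extends uniquely across a closed set of codimension at least two. I do not expect any obstruction from possible singularities of $\DPinf_\R$, because for the connectedness of the complement one needs only that $\DPinf_\R$ is a closed set of topological codimension $\ge2$, which holds for any real algebraic subset of the asserted dimension.
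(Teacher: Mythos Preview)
Your proof is correct and follows essentially the same route as the paper: both invoke smoothness of $\PR$ and Proposition \ref{codimensiontwo} to get real codimension $\ge 2$ for $\DPinf_\R$, then apply the standard fact that a locally constant function extends across such a set. Your version simply spells out the ball-and-connectedness argument that the paper compresses into a single sentence.
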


\begin{proof}
Since $\P$ is smooth and, due to Corollary \ref{codimensiontwo}, $\DPinf$ has codimension $\ge2$,
the function $S$, as any locally constant function on $\PR\sm(\DPR\cup\DPinf_\R)$, extends by continuity to $\PR\sm\DPR$.
\end{proof}

\subsection{Wall-crossing}\label{Wall-crossing}
\begin{lemma}\label{alternation-in-M}
Assume that a path $(C_t,M_t,D_t)\in\MM_\R$, $t\in[a,b]$, intersects $\DM_\R$ transversely at a generic point of  $\DM_\R$.
Then $\Sw(C_t,M_t,D_t)\in\{\pm1\}$ alternates at the point of crossing  with $\DM_\R$.
\end{lemma}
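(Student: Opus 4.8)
The plan is to reduce the statement to the transversality result already established in Proposition \ref{wall-transversality} together with the topology of the conic $\D_2 \subset P(V)$. Recall that $\Sw(C,M,D) = \mathrm{ind}(\widehat\Psi(C,M,D))$, where $\mathrm{ind}$ equals $+1$ on the M\"obius-band (exterior) component of $P(V)_\R \sm (\D_2)_\R$ and $-1$ on the disc (interior) component, and that by Lemma \ref{discr-criterion} we have $\DM = \widehat\Psi^{-1}(\D_2)$. So the composite $t \mapsto \widehat\Psi(C_t,M_t,D_t)$ is a path in $P(V)_\R$ which meets $(\D_2)_\R$ exactly when the original path meets $\DM_\R$.

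First I would invoke Proposition \ref{wall-transversality}: the map $\widehat\Psi$ is transverse to $\D_2$ at generic points of $\DM$, hence (transversality being an open condition, and $\DM$ being irreducible by Lemma \ref{irreducibility}) at the generic crossing point $(C_{t_0},M_{t_0},D_{t_0}) \in \DM_\R$ the differential of $\widehat\Psi$ is transverse to the tangent line of $\D_2$. Combined with the hypothesis that the path $t \mapsto (C_t,M_t,D_t)$ crosses $\DM_\R$ transversely, a routine chain-rule argument shows that the image path $t \mapsto \widehat\Psi(C_t,M_t,D_t)$ crosses $(\D_2)_\R$ transversely as well. Therefore the path passes from one side of the real conic $(\D_2)_\R$ to the other at the crossing parameter. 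Since $\mathrm{ind}$ takes the constant value $+1$ on one component and $-1$ on the other of $P(V)_\R\sm(\D_2)_\R$, the value of $\Sw$ jumps from $+1$ to $-1$ or vice versa; i.e., it alternates.

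The one point that requires a little care — and which I expect to be the main obstacle — is passing from "transverse to $\D_2$ at the complex point" and "the path in $\MM_\R$ is transverse to $\DM_\R$" to "the real image path is genuinely transverse to $(\D_2)_\R$ at a real point, hence actually crosses it rather than touching it." This is where one uses that $\widehat\Psi$ is defined over $\R$, that the generic real point of $\DM_\R$ is a smooth real point of $\DM$, and that the real locus of the transverse preimage $\widehat\Psi^{-1}(\D_2) = \DM$ inherits the transversality; one then notes that at such a point the tangent line of $(\D_2)_\R$ and the tangent direction of the image path together span $T_{\widehat\Psi(C_{t_0})}P(V)_\R$. Once this local picture is in place, the conclusion about the sign change follows immediately from the definition of the index function $\mathrm{ind}$, with no further computation needed. (An alternative to the genericity/transversality route, if one wanted to avoid relying on Proposition \ref{wall-transversality} at the particular crossing, is to note that $\Sw$ is locally constant on $\M_\R$ by Lemma \ref{continuity}, so it suffices to understand its values on the two chambers of $\MM_\R$ adjacent to a single generic wall point — and there one may use the explicit variation $C_t$ constructed in the proof of Proposition \ref{wall-transversality}, whose image $\widehat\Psi(C_t,M,D)$ is a line meeting $\D_2$ transversely at $t=0$.)
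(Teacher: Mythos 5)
Your proposal is correct and follows essentially the same route as the paper: the paper's proof simply invokes Proposition \ref{wall-transversality} to conclude that the image path $\widehat\Psi(C_t,M_t,D_t)$ crosses $(\D_2)_\R$ transversely, whence $\Sw$ alternates with the index function. Your additional care about deducing real transversality of the image path from the complex statement is a reasonable elaboration of a step the paper leaves implicit, but it is not a different argument.
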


\begin{proof}
By Proposition \ref{wall-transversality}, the path $\widehat\Psi(C_t,M_t,D_t)\in P(V)$ intersects
$(\D_{2})_\R$
transversely, and
therefore
$\Sw(C_t,M_t,D_t)$ alternates after the index function.
\end{proof}

\begin{proposition}\label{wall-crossing}
If a path $C_t\in\PR\sm\DPinf_\R$, $t\in[a,b]$, crosses transversely a wall of $\DPR$
at a generic point $C_0$, $0\in(a,b)$, then $S^\P(C_t)$ alternates at $t=0$.
\end{proposition}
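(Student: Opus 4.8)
The plan is to localize the computation at a generic point $C_0$ of the wall, to follow the multisecants of a nearby curve $C_t$ through the product formula (\ref{function-S}), and to observe that exactly $2n-3$ of them degenerate, contributing $2n-3$ (an odd number) of sign changes. Fix $C_0$ generic in $\DP$ through which the path passes at $t=0$; note that for $t\ne0$ near $0$ one has $C_t\in\PR\sm(\DPR\cup\DPinf_\R)$, so (\ref{function-S}) applies. By Proposition \ref{Castelnuovo}(4) the Castelnuovo total $\binom n2$ is distributed over the $\binom n2-(2n-3)$ secants $M_1,\dots\in\Sec^{n-3}_{2n-4}(C_0)$, each of multiplicity $1$, together with the unique $M_0\in\Sec^{n-3}_{2n-3}(C_0)$, of multiplicity $2n-3$, whose divisor $M_0\cdot C_0$ consists of $2n-3$ distinct points $s_1,\dots,s_{2n-3}$ (Lemma \ref{generic-on-DP}(3)). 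Correspondingly, $\pr_1^{-1}(C_0)\subset\MM$ consists of the $\binom n2-(2n-3)$ points $(C_0,M_j,M_j\cdot C_0)\in\M$ and the $2n-3$ points $(C_0,M_0,D_k)\in\DM$ with $D_k=(M_0\cdot C_0)-s_k$.

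Since $\MM$ is smooth of dimension $\dim\P$ (Lemma \ref{irreducibility}), and since the transversality of $A_\Gamma\cap B_\Lambda$ at the $2n-3$ points $W_{D_k}$ contained in $W_{M_0\cdot C_0}$ established in Lemma \ref{transversality-check} is an open condition, for $t\ne0$ near $0$ there are exactly $2n-3$ pairwise distinct secants $M_{t,k}\in\Sec^{n-3}_{2n-4}(C_t)$ near $M_0$, each of multiplicity $1$, with lifts $\gamma_k(t)=(C_t,M_{t,k},M_{t,k}\cdot C_t)\in\MM_\R$ and $\gamma_k(0)=(C_0,M_0,D_k)$; equivalently (an injective algebraic map of smooth equidimensional varieties being locally an isomorphism in characteristic $0$), $\pr_1$ is a local diffeomorphism near each $(C_0,M_0,D_k)$ and carries a neighbourhood of this point in $\DM$ onto a neighbourhood of $C_0$ in $\DP$. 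Hence $\gamma_k$ crosses $\DM_\R$ transversely at $t=0$ exactly when $C_t$ crosses $\DPR$ transversely at $t=0$, which is our hypothesis. Moreover, $\DM$ is irreducible (Lemma \ref{irreducibility}) and $\pr_1|_\DM\colon\DM\to\DP$ is dominant and generically finite, so the image in $\DP$ of the locus of points of $\DM$ that are singular, fail to be generic for Proposition \ref{wall-transversality}, or at which $\pr_1|_\DM$ is not \'etale, is a proper subvariety; a generic $C_0$ avoids it, so every $\gamma_k(0)$ is a generic point of $\DM_\R$. Lemma \ref{alternation-in-M} then applies to each $\gamma_k$, giving that $\Sw(\gamma_k(t))$ alternates at $t=0$.

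Meanwhile the $\binom n2-(2n-3)$ secants of $C_t$ close to the $M_j$ stay, with their lifts, inside the open set $\M_\R$ for all $t$ near $0$ (including $t=0$), so by Lemma \ref{continuity} the corresponding factors of (\ref{function-S}) are locally constant at $t=0$, and by the finiteness/properness underlying the Castelnuovo count (Proposition \ref{Castelnuovo}) no further secants of $C_t$ occur for $t$ near $0$. Therefore
$$
S^\P(C_t)=\Big(\text{a factor locally constant at }t=0\Big)\cdot\prod_{k=1}^{2n-3}\Sw\bigl(\gamma_k(t)\bigr),
$$
so crossing $t=0$ multiplies $S^\P$ by $(-1)^{2n-3}=-1$, since $2n-3$ is odd; this is the claimed alternation. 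It suffices to run this argument at one generic point of each connected component of the wall, because $S^\P$ is locally constant off $\DPR$, whence the ratio of its two one-sided values along a wall component is locally constant there. The main obstacle is the second paragraph: verifying that $\pr_1$ is a local diffeomorphism at the $2n-3$ fibre points $(C_0,M_0,D_k)$ and that these are generic points of $\DM_\R$, so that Lemma \ref{alternation-in-M} can be invoked simultaneously for all $2n-3$ degenerating branches; the ingredients are the smoothness of $\MM$, the irreducibility of $\DM$, the openness of the transversality in Lemma \ref{transversality-check}, and, as an optional explicit model, the one-parameter family built in the proof of Proposition \ref{wall-transversality}.
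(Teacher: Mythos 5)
Your overall strategy coincides with the paper's: localize at a generic $C_0\in\DP_\R$, use Proposition \ref{Castelnuovo}(4) to describe the fibre $\pr_1^{-1}(C_0)$, lift the path to $\MM$, and apply Lemma \ref{alternation-in-M} to the branches that pass through $\DM$. But there is a genuine gap at the decisive step. You assert that all $2n-3$ lifts $\gamma_k(t)=(C_t,M_{t,k},M_{t,k}\cdot C_t)$ lie in $\MM_\R$ and then write $S^\P(C_t)$ as a locally constant factor times $\prod_{k=1}^{2n-3}\Sw(\gamma_k(t))$, concluding with the exponent $(-1)^{2n-3}$. This is not justified: the divisor $D_k=(M_0\cdot C_0)-s_k$ is real only when the point $s_k$ is real, and the $2n-3$ points of $M_0\cdot C_0$ are the roots of a real polynomial of odd degree $2n-3$ (the common roots of $p_1,p_2$ in suitable coordinates), so on many walls some of them occur in complex conjugate pairs. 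For such $k$ the branch $\gamma_k$ is not real, $\Sw(\gamma_k(t))$ is undefined ($\Sw$ is a function on $\M_\R$ only), and the corresponding secants of $C_t$ simply do not enter the product (\ref{function-S}), which runs over the real points of the fibre. So your count of $2n-3$ sign changes is wrong on those walls, even though the final sign $-1$ happens to be correct.

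The repair is exactly the one sentence the paper's proof hinges on: complex conjugation permutes the $2n-3$ points $(C_0,M_0,D_k)$ of $\pr_1^{-1}(C_0)\cap\DM$, and since $2n-3$ is odd, an \emph{odd} number of them are fixed, i.e., belong to $\DM_\R$. Only these produce real branches, each crossing $\DM_\R$ transversely and contributing one sign flip by Lemma \ref{alternation-in-M}; the non-real branches pair off under conjugation and contribute nothing to $S^\P(C_t)$ for real $t\ne0$. An odd number of flips still multiplies $S^\P$ by $-1$, so the conclusion is saved, but by the parity of the real part of the fibre rather than by the total count of branches. The remainder of your argument --- smoothness of $\MM$ and $\DM$ at the fibre points via the openness of the transversality established in Lemma \ref{transversality-check}, local constancy of the factors coming from the non-degenerating secants, and the genericity of $C_0$ needed to invoke Lemma \ref{alternation-in-M} --- is sound and agrees with what the paper does implicitly.
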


\begin{proof}
%\mnote{Kh: proof edited; additional argument given}
According to Proposition \ref{Castelnuovo}(4), the preimage of $C_0$ in $\DM$
consists of $2n-3$ points $(C_0,M,D)$
represented by pairwise distinct
$D\in \Sym^{2n-4}(P^1)$ with $D<M\cdot C_0$ where
$M$ is the unique element of $\Sec^{n-3}_{2n-3}(C_0)$.
Since $C_0\in\DP_\R$, %then
among these $2n-3$ points an odd number belong to $\DM_\R$. Respectively, since, in addition, $\MM$ and $\DM$ are non-singular at $C_0$,
the path $C_t$
is covered in $\MM_\R$ by an odd number of paths that all cross $\DM_\R$,
so that
we can apply Lemma \ref{alternation-in-M}
and conclude that the sign of $S^\P(C_t)$ alternates at $t=0$.
\end{proof}

For $X\in\XR\sm\DXR$ we let $S(l,X)=S^\P(\Jet^1(l,X))$
and call $S(l,X)$ the
 {\it Segre index} of $l$ in $X$.

\begin{proposition}\label{wall-crossing-in-X}
%$S^\X
$S\!:\XR\sm\DXR\to\Z/2$ is a continuous function that alternates its value  under generic crossings of the walls of $\DXR$.
\end{proposition}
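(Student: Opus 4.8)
The plan is to reduce both assertions to the corresponding facts already proved on the space of jet curves $\P$, transported through the jet map. Recall that $S(l,X)=S^\P(\Jet^1_l(X))$ by definition, so $S$ is the composite of the real locus of the morphism $\Jet^1_l\colon\X\to\P$ with $S^\P$. For continuity I would simply observe that, since $\DX=(\Jet^1_l)^{-1}(\DP)$ and hence $\DXR=(\Jet^1_l)_\R^{-1}(\DPR)$, the map $(\Jet^1_l)_\R$ sends $\XR\sm\DXR$ into $\PR\sm\DPR$, where $S^\P$ is continuous by Proposition~\ref{S-continuity}; a continuous map composed with a continuous function is continuous. This disposes of the first half of the statement.

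For the wall-crossing half, the first step I would carry out is to match the walls upstairs and downstairs. By Lemma~\ref{fundamental-matrix-conditions}(2), together with the sheaf description of $N_{l,X}$ used in its proof, the isomorphism type of $N_{l,X}$ — in particular its full splitting type — depends only on $C=\Jet^1_l(X)$. Hence, letting $\D^{\P}_0\subset\DP$ be the open dense stratum of curves whose associated normal bundle has splitting type $O_l\oplus O_l(-2)\oplus O_l(-1)^{\oplus(n-3)}$, one gets $\DX_0=(\Jet^1_l)^{-1}(\D^{\P}_0)$, and the walls of $\DPR$ appearing in Proposition~\ref{wall-crossing} are precisely the top-dimensional connected components of $(\D^{\P}_0)_\R$. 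Since $(\Jet^1_l)_\R$ is a fibration with contractible, hence connected, fibres (Proposition~\ref{discr-comparison}), the preimage of a wall of $\DPR$ is connected and is therefore a wall of $\XR$, and every wall of $\XR$ arises in this way as the full preimage of a unique wall of $\DPR$.

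The second step is to push a generic transversal crossing down to $\P$. Given a path $X_t$ crossing a wall of $\XR$ transversally at a generic point $X_0$, I set $C_t=\Jet^1_l(X_t)$. Because $(\Jet^1_l)_\R$ is a submersion and the wall through $X_0$ is exactly the full preimage of the wall through $C_0=\Jet^1_l(X_0)$, the tangent space to the wall upstairs is the $d\Jet^1_l$-preimage of the tangent space to the wall downstairs; from this one reads off that $\dot C_0\neq 0$ and that $\dot C_0$ is not tangent to $(\D^{\P}_0)_\R$, so $C_t$ crosses the wall of $\DPR$ transversally. One also needs $C_0$ to be a \emph{generic} point of $\DP$ in the sense required by Proposition~\ref{wall-crossing} (the Castelnuovo count of Proposition~\ref{Castelnuovo}(4), transversality of $\widehat\Psi$ to $\D_2$ from Proposition~\ref{wall-transversality}, and $C_0\notin\DPinf$): the locus of non-generic points on a wall of $\DPR$ is a proper closed subset, its preimage under the surjective, connected-fibred restriction of $\Jet^1_l$ to the wall upstairs is a proper closed subset there, and a generic $X_0$ avoids it — here I would invoke the Zariski density of the real loci of the irreducible varieties $\DX$ and $\DP$ (established in the proof of Proposition~\ref{D-reduced}) to move between genericity over $\C$ and over $\R$. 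Finally, since genericity of $C_0$ includes $C_0\notin\DPinf$ and $\DPinf$ is closed, $C_t$ stays in $\PR\sm\DPinf_\R$ for $t$ near the crossing, so Proposition~\ref{wall-crossing} applies and $S^\P(C_t)=S(l,X_t)$ alternates there.

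The main obstacle, such as it is, will be the bookkeeping in this last transfer: one must check carefully that the splitting-type stratification and the notion of ``wall'' on $\XR$ really are matched by $\Jet^1_l$ with those on $\PR$, and that each genericity condition needed to invoke Proposition~\ref{wall-crossing} is a Zariski-open condition whose pullback along the fibration is again generic. Once this dictionary between $\X$ and $\P$ is in place, the proposition follows formally. (For $n=2$ the discriminant $\DX$ is empty by Proposition~\ref{D-reduced}, so there are no walls and the assertion is vacuous.)
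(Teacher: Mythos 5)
Your proposal is correct and follows essentially the same route as the paper, whose proof is a one-line reduction to Propositions \ref{wall-crossing} and \ref{discr-comparison}: you compose $S^\P$ with the jet map and use the fibration with contractible fibres to transport walls, transversal crossings, and genericity from $\XR$ down to $\PR$. You have merely written out in full the bookkeeping that the paper leaves implicit.
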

\begin{proof} Straightforward from
Propositions \ref{wall-crossing} and \ref{discr-comparison}.
\end{proof}

\begin{lemma}\label{coincidence-example}
For some $X\in\XR\sm\DXR$ we have
%$S^\X
$S(l,X)=\ind(l,X)$.
\end{lemma}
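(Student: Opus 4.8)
The plan is to exhibit one explicit real hypersurface $X$ where both sides of the claimed equality can be computed by hand. The natural candidate is the same hypersurface already used in Proposition~\ref{one-example}, namely the one defined by
\[
x_1u^{2n-2}+x_2u^{2n-4}v^2+\dots+x_nv^{2n-2}.
\]
For this $X$ we computed $\det A_C=1$ in the proof of Proposition~\ref{one-example}, so $N_l$ is balanced, $C=\Jet^1(l,X)\in\PR\sm(\DPR\cup\DPinf_\R)$, and $\ind(l,X)=1$. It therefore suffices to show $S(l,X)=S^\P(C)=+1$, i.e.\ that every $(n-3)$-dimensional $(2n-4)$-secant $M$ of the rational curve $C=[u^{2n-2}:u^{2n-4}v^2:\dots:v^{2n-2}]$ is hyperbolic, with its residual degree-$2$ pencil having two real Segre points, and that the multiplicities $m(M)$ are all $1$.

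First I would analyze the curve $C$ itself: up to the substitution $(s,t)=(u^2,v^2)$ on a double cover, $C$ factors through the rational normal curve of degree $n-1$, so its $(2n-4)$-secant structure is governed by that of a rational normal curve of degree $n-1$ in $P^{n-1}$ (whose $(n-3)$-planes are all $(n-1)$-secant). Concretely, a hyperplane $\sum a_i x_i=0$ meets $C$ in the divisor cut out by $\sum a_i u^{2n-2-2(i-1)}v^{2(i-1)}=0$, i.e.\ by a binary form of degree $2n-2$ in $(u,v)$ that is \emph{even} — a polynomial in $u^2,v^2$. An $(n-3)$-plane $M$ is obtained by intersecting $n-2$ such hyperplanes; requiring that the common divisor have degree $\ge 2n-4$ forces (via the even structure) the secant divisor $D=M\cdot C$ and the residual divisor $D^r_t$ to be $\iota$-invariant, where $\iota\colon [u:v]\mapsto[u:-v]$. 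In particular each residual degree-$2$ divisor $D^r_t$ is either $\{p,\iota p\}$ or supported on the two fixed points $[1:0],[0:1]$ of $\iota$; one checks the residual pencil is the pencil of $\iota$-invariant degree-$2$ divisors, whose two Segre points are exactly the real points $[1:0]$ and $[0:1]$. Hence every such $M$ is hyperbolic and $\Sw(C,M,M\cdot C)=+1$. Since $C\notin\DPinf$ (no positive-dimensional family of $(2n-4)$-secants, again from the rational-normal-curve reduction) the count $\sum m(M)=\binom n2$ of Proposition~\ref{Castelnuovo}(2) holds and all factors in \eqref{function-S} equal $+1$, giving $S^\P(C)=+1=\ind(l,X)$.

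The main obstacle is the bookkeeping of secants: I must verify that $C$ genuinely lies in $\PR\sm(\DPR\cup\DPinf_\R)$ (so that $S^\P(C)$ is defined by the finite product \eqref{function-S} and not merely by continuous extension), and that \emph{every} element of $\Sec^{n-3}_{2n-4}(C)$, not just the generic ones, has a real residual pencil. The cleanest way to do this is probably to move slightly within $\PR\sm\DPR$: by Proposition~\ref{S-continuity} the function $S^\P$ is locally constant near $C$ on $\PR\sm\DPR$, so it is enough to compute $S^\P$ at a nearby \emph{generic} real perturbation $C'$ of $C$, where by Proposition~\ref{Castelnuovo}(3) all $\binom n2$ secants are simple. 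A small generic real perturbation preserving the reality of the Segre points then yields $S^\P(C')=+1$, hence $S(l,X)=S^\P(C)=+1=\ind(l,X)$, as required. Should even the generic perturbation be awkward to control, an alternative is to keep the explicit $X$ but argue that complex conjugation acts on $\Sec^{n-3}_{2n-4}(C)$ pairing each non-real secant with its conjugate (equal Segre weights) and fixing the real ones, reducing the problem to the real secants, which by the $\iota$-invariance argument above are all hyperbolic.
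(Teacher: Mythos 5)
Your choice of example and the structural observation behind it (the curve factors as the double cover $[u:v]\mapsto[u^2:v^2]$ followed by the rational normal curve $B$ of degree $n-1$, every residual pencil is the pencil of fibers of that double cover, and its Segre points are the real ramification points $[1:0]$, $[0:1]$) are exactly the paper's. But your main line of argument contains a concrete error: the claim that $C\notin\DPinf$ is false, and the ``rational-normal-curve reduction'' shows the opposite of what you assert. For \emph{every} $D_0\in\Sym^{n-2}(P^1)$ the unique $(n-3)$-plane $M_{D_0}$ spanned by $B(D_0)$ satisfies $M_{D_0}\cdot C=2D_0$, so $\Sec^{n-3}_{2n-4}(C)$ contains an $(n-2)$-dimensional family and $C\in\DPinf_\R\sm\DP_\R$. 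Consequently the finite product (\ref{function-S}) is not defined at $C$, Proposition \ref{Castelnuovo}(2)--(3) does not apply to $C$ itself, and $S^\P(C)$ only makes sense through the continuous extension of Proposition \ref{S-continuity}. Your ``cleanest way'' fallback --- perturb to a generic $C'\in\P_\R\sm(\DP_\R\cup\DPinf_\R)$ and use local constancy of $S^\P$ --- is therefore not an optional tidying-up but the only viable route, and it is precisely what the paper does.

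Within that fallback, one phrase still needs repair: you speak of a perturbation ``preserving the reality of the Segre points'', as if this were a condition to be imposed on $C'$. What is actually needed (and true) is that \emph{every} real $(2n-4)$-secant of \emph{every} sufficiently close $C'$ is automatically hyperbolic: since $C\notin\DPP\cup\DPPinf$, the projection $\pr_1$ from $\MM$ is proper near $C$, so the triples $(C',M',D')$ over nearby $C'$ converge to triples $(C,M_{D_0},2D_0)$, and continuity of $\widehat\Psi$ on $\MM$ places their Segre pairs near $\{[1:0],[0:1]\}$, i.e.\ in the exterior of $(\D_2)_\R$. Your second alternative (conjugation pairing of non-real secants) is not needed and is slightly off the mark anyway, since $\Sw$ and the product (\ref{function-S}) only ever involve real secants. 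With the $\DPinf$ claim corrected and the perturbation step made the actual argument, your proof coincides with the paper's.
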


\begin{proof} Consider the same pair $(l,X)$ with $\ind(l,X)=1$ as in the proof of Proposition \ref{one-example}.
Recall that in that example
 $C=\Jet^1(l,X)\!:P^1\to P^{n-1}$  is composed of a degree 2
map $P^1\to P^1$, $[u:v]\mapsto[u^2:v^2]$, followed by an embedding
$B\!:P^1\to P^{n-1}$, $[u:v]\mapsto [u^{n-1}:u^{n-2}v:\dots:v^{n-1}]$.

For any $D\in \Sym^{n-2} (P^1)$,
there exists one and only one $(n-3)$-dimensional $(n-2)$-secant $M_D$ of $B$ with $M_D\cdot B=D$.
The same $M_D$ is a $(2n-4)$-secant of $C$ with $M_D\cdot C=2D$.
Furthermore, since $B$ is not contained in any hyperplane of $P^{n-1}$,
it can not have any $(n-1)$-secant, and, as a consequence, $C\in\DPinf_\R\sm\DP_\R$.
Hence, the pencil
$\Psi(C,M_D,2D)\in P(V)$ is well-defined for any $D\in \Sym^{n-2} (P^1)$.
This pencil is hyperbolic because the ramification points of the map $[u:v]\mapsto[u^2:v^2]$ are real.
By continuity of $S^\P$,
%\X$,
%\mnote{Kh: "of $S^\X$" added; should we put a reference here}
this implies that any small perturbation
$C'\in \P_\R\sm(\DP_\R\cup \DPinf_\R)$
%\mnote{Kh: $C'\in \P_\R\sm(\DP_\R\cup \DPinf_\R)$ instead of $C'\in \P_\R\sm\DP_\R$ }
of $C$ may have only hyperbolic real $(n-3)$-dimensional $(2n-4)$-secants
and therefore has  $S^\P(C')$ equal to $+1$.

Finally, there remain to pick a  perturbation $(l, X')$ of $(l,X)$ with $\Jet^1(l,X') =C'$
and to use the continuity of $\ind(l,X)$ (see \ref{continuity-W}).
\end{proof}

\begin{theorem}\label{Euler=Segre}
For any $X\in\XR\sm\DXR$ we have
%$S^\X
$S(l,X)=\ind(l,X)$.
\end{theorem}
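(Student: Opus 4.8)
The plan is to use the same two-step strategy as for Theorem~\ref{Euler=Welsh}: first show that $S$ and $\ind$ change in exactly the same way across the walls of $\XR$, so that their product $S\cdot\ind$ (a $\{\pm1\}$-valued function) is locally constant and wall-invariant, and then evaluate that product on one explicit pair $(l,X)$. For $n=2$ nothing new is needed: by Proposition~\ref{D-reduced} the discriminant $\DXR$ is empty, so $\XR\sm\DXR=\XR$, and the identity $S(l,X)=\ind(l,X)$ is precisely the cubic-surface statement proved in \cite{abundance}.

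Assume $n\ge 3$. Both $\ind$ and $S$ are $\{\pm1\}$-valued and locally constant on $\XR\sm\DXR$: for $\ind$ this is Lemma~\ref{continuity-W} (or Proposition~\ref{E=sign}), and for $S$ it is the continuity half of Proposition~\ref{wall-crossing-in-X}, which rests on Proposition~\ref{S-continuity} and Corollary~\ref{pushing-forward}. Hence $S\cdot\ind$ is locally constant on $\XR\sm\DXR$. Across any wall --- a top-dimensional component of $(\DX_0)_\R$ --- the value of $\ind$ alternates by Proposition~\ref{wall-crossing-of-I-and-W} and the value of $S$ alternates by Proposition~\ref{wall-crossing-in-X}; the two sign changes cancel, so $S\cdot\ind$ takes one and the same value on the two sides of the wall.

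It remains to promote ``locally constant and wall-invariant'' to ``globally constant''. Here I would use that $\XR$ is connected by Lemma~\ref{connected}, and that the part of $\DXR$ not lying on a wall --- the non-top-dimensional pieces of $(\DX_0)_\R$ together with the deeper strata of $\DXR$, which together form a subvariety of codimension $\ge 2$ in $\XR$ --- cannot obstruct joining chambers: any two chambers of $\XR\sm\DXR$ can be joined by a path meeting $\DXR$ in finitely many points, each a generic point of some wall crossed transversally. Along such a path $S\cdot\ind$ is unchanged, so it is the same constant on every chamber. Lemma~\ref{coincidence-example} exhibits a pair with $S(l,X)=\ind(l,X)$, i.e.\ $S\cdot\ind=+1$ there; therefore $S\cdot\ind\equiv+1$ on $\XR\sm\DXR$, which is exactly $S(l,X)=\ind(l,X)$. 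The one step that genuinely needs care is this passage from local to global constancy, namely verifying that the chambers are all linked through walls alone; this is where the connectedness of $\XR$ and the codimension $\ge 2$ estimate on the non-generic part of the discriminant are essential, and it is the same mechanism that closes the proof of Theorem~\ref{Euler=Welsh}. Everything else is bookkeeping with results already in hand.
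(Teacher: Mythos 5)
Your proposal is correct and follows essentially the same route as the paper: the paper's proof of Theorem~\ref{Euler=Segre} likewise invokes Propositions~\ref{wall-crossing-of-I-and-W} and \ref{wall-crossing-in-X} together with Lemmas~\ref{connected} and \ref{continuity-W} to reduce the claim to the single example of Lemma~\ref{coincidence-example}, and handles $n=2$ by citing \cite{abundance}. The only difference is that you spell out the local-to-global step (chambers linked through generic wall crossings) which the paper leaves implicit.
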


\begin{proof}
For $n=2$, see \cite{abundance}.
For $n\ge 3$,  due to
Propositions \ref{wall-crossing-of-I-and-W},
\ref{wall-crossing-in-X} and Lemmas \ref{connected}, \ref{continuity-W}, it is  sufficient
to check coincidence of $\ind$ and $S^\X$ for one particular example,
what is done in Lemma \ref{coincidence-example}.
\end{proof}

\section{Another viewpoint on the generalized Segre indices}\label{another-viewpoint}

\subsection{The case of quintic threefolds}\label{Cremona}
If $n=3$, then $X$ is a quintic threefold in $P^4$ and the curve $C=\Jet^1(l,X) : P^1\to P^2$ that we associate
with a pair $(l,X)$ is a parametrized rational plane quartic.
For a generic $X\in \X$, the singular locus of $C$ consists of three nodal
points $r_1, r_2, r_3$.
Applying an elementary quadratic (Cremona) transformation $\crem:P^2\dashrightarrow P^2$ based at
these three points, we obtain a conic
$Q=\crem(C)$ and
a triple of points $B=\{s_1,s_2,s_3\}\subset P^2\sm Q$ that are the base points of the inverse quadratic transformation.
 In the real setting, the conic $Q$ must have $Q_\R\ne\varnothing$ (since it inherits from $C$ a real parametrization by
 $l$ and $l_\R\ne\varnothing$),
while among the
points $s_i$ either all three are real
(if the nodes of $C$ are real) or one is real and two others are imaginary complex conjugate.

\begin{proposition}\label{Segre-for quartics}
For a generic real rational plane quartic $C$,
the Segre index
$S^\P(C)=S(l,X)$ is equal to $(-1)^{int(B,Q)}$, where $int(B,Q)$ is the number of real points in
$B$ that lie inside $Q_\R$.
\end{proposition}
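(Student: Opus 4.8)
The plan is to reduce the statement to the already-established wall-crossing and example framework of Section \ref{def-S}, now transported through the Cremona transformation. First I would set up the correspondence between generic real rational plane quartics $C$ and pairs $(Q,B)$ consisting of a real conic $Q$ with $Q_\R\neq\varnothing$ together with a triple $B=\{s_1,s_2,s_3\}$ of base points lying off $Q$, invariant under complex conjugation; this is a birational dictionary, and I would check that it identifies the relevant open strata (generic $C\notin\DP\cup\DPinf$ corresponding to $Q,B$ in general position). The key geometric observation is that a $0$-dimensional $2$-secant $M=\{m_1,m_2\}\subset P^2$ of $C$ (i.e.\ an element of $\Sec^0_2(C)$ in the $n=3$ notation, which here means an honest secant \emph{line} through two points of $C$, since $n-3=0$ and $2n-4=2$) is carried by $\crem$ to a conic through the three base points $s_1,s_2,s_3$ and the two points $\crem(m_1),\crem(m_2)$ on $Q$ — but a conic through $5$ points is unique, so secant lines of $C$ correspond to the pencil of conics through $B$, cut out on $Q$. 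The residual pencil $\{D^r_t\}$ on $P^1$ of Subsection \ref{binary} then becomes exactly the degree-$2$ divisor that this pencil of conics traces on the parametrized conic $Q$, and its two Segre points are the two members of the pencil tangent to $Q$ — equivalently, the two values of $t$ at which the conic $C_t$ degenerates in a way that drops a point onto the secant.

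Next I would translate the Segre weight $\Sw$ into the position of $B$ relative to $Q_\R$. The pencil of real conics through the conjugation-invariant triple $B$, restricted to the real conic $Q_\R\cong P^1_\R$ (a circle), gives a pencil of degree-$2$ divisors on $P^1_\R$; its two base points (Segre points) are real precisely when the pencil is "hyperbolic" in the sense of Subsection \ref{Segre-weight}. I would compute this by a direct real-geometry argument: if all three $s_i$ are real, then whether the pencil of conics through them has real or imaginary tangency points with $Q$ is governed by how many of the $s_i$ lie inside the disc bounded by $Q_\R$ — crossing $Q_\R$ with one of the $s_i$ changes the parity. When only one $s_i$ is real and the other two are conjugate, the conjugate pair contributes a fixed (even) amount and the real one contributes according to whether it is inside or outside $Q_\R$; in all cases the number $int(B,Q)$ of real base points inside $Q_\R$ has the right parity to reproduce $\Sw$ up to the global sign. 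Assembling the product over all secants as in \eqref{function-S}, with all multiplicities $1$ for generic $C$ by Proposition \ref{Castelnuovo}(3), yields $S^\P(C)=(-1)^{int(B,Q)}$ up to a sign that must be pinned down.

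To fix that sign and complete the proof rigorously rather than by a parity bookkeeping that could be off by a constant, I would invoke the now-familiar two-step strategy of the paper: (i) show that the right-hand side $(-1)^{int(B,Q)}$ obeys the same wall-crossing rule as $S^\P$, namely that as $C_t$ crosses a wall of $\DPR$ transversally, exactly one real base point $s_i$ of $B_t$ crosses $Q_{t,\R}$ (this is the Cremona-transported version of Proposition \ref{wall-crossing}, and follows because a wall-crossing is exactly a degeneration where a secant line becomes a $3$-secant, i.e.\ a conic through $B$ becomes tangent to $Q$ at a point of $C$, which in the $(Q,B)$ picture is precisely a base point $s_i$ touching $Q_\R$); and (ii) check equality on a single explicit example, e.g.\ the one from Lemma \ref{coincidence-example}, where the degree-$2$ map $[u:v]\mapsto[u^2:v^2]$ makes all secants hyperbolic, $C$ has no real base point inside $Q_\R$ after perturbation, and both sides equal $+1$. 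Since $\XR\sm\DXR$ maps onto a connected space modulo walls (Lemmas \ref{connected}, \ref{continuity-W}) and both functions are locally constant with identical wall-crossing behaviour, agreement on one chamber propagates everywhere.

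The main obstacle I anticipate is step (i) of the last paragraph combined with the secant-to-conic dictionary: one must verify carefully that a wall of $\DPR$ — a transversal crossing of the locus where a $2$-secant line of $C$ acquires a third intersection point with $C$ — corresponds under $\crem$ to exactly one real base point $s_i$ passing from inside to outside $Q_\R$ (and not, say, to $Q_\R$ itself changing topological type or to two base points crossing simultaneously). Handling the real structure when the triple $B$ is not totally real requires checking that the conjugate pair $\{s_2,\bar s_2\}$ never individually contributes to $int(B,Q)$ — a conjugate pair is either both inside or both outside, contributing an even number — and that such pairs cannot migrate across $Q_\R$ at a generic wall; this is where a clean local model of the Cremona map near the relevant degeneration, rather than global intersection theory, will be needed.
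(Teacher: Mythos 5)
Your argument goes off the rails at the very first step, in the identification of the secants. For $n=3$ the relevant set is $\Sec^{n-3}_{2n-4}(C)=\Sec^{0}_{2}(C)$, and by the definition in Subsection 4.1 an element of it is a \emph{$0$-dimensional subspace} $M\subset P^{2}$ --- a single point --- for which the intersection divisor $M\cdot C$ has degree $2$. These are the double points of the parametrized quartic, i.e.\ for generic $C$ its three nodes $r_1,r_2,r_3$, in agreement with the Castelnuovo count $\binom{3}{2}=3$. You instead take them to be secant \emph{lines} through two points of $C$: but such lines form a two-parameter family, a line meets a quartic in a degree-$4$ divisor and so can never be a $2$-secant in the paper's sense, and your ``pencil of conics through $B$'' is in fact a net and cuts degree-$4$ (not degree-$2$) divisors on $Q$. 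Consequently the dictionary on which everything downstream rests --- secants of $C$ corresponding to conics through $B$, Segre points corresponding to members of that family tangent to $Q$ --- does not reproduce the residual pencil of Subsection 5.1, whose definition here involves the pencil of \emph{lines through a node} of $C$.

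With the secants correctly identified the proof is a two-line computation and none of your wall-crossing or sign-normalization machinery is needed. The Cremona map $\crem$ based at $r_1,r_2,r_3$ carries the pencil of lines through $r_i$ to the pencil of lines through $s_j=\crem(\la r_k,r_l\ra)$, $\{k,l\}=\{1,2,3\}\sm\{i\}$; hence the residual pencil of $(C,r_i,D)$ is the degree-$2$ pencil cut on $Q\cong P^1$ by the lines through $s_j$, whose Segre points are the two tangency points from $s_j$ and are complex conjugate exactly when $s_j$ is real and lies inside $Q_\R$. Real nodes correspond to real base points and non-real (conjugate) nodes to non-real base points, which enter neither the push-forward product (5.2.1) (taken over $\M_\R$) nor $int(B,Q)$; so the product over the three nodes gives $(-1)^{int(B,Q)}$ with no undetermined constant to pin down. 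Your fallback strategy (wall-crossing plus one example) is not wrong in spirit, but as written it is both superfluous and anchored to the broken secant-line dictionary, so it would not salvage the argument without first repairing that identification.
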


\begin{proof}
For $n=3$ we have $\Sec^{n-3}_{2n-4}(C)=\{r_1,r_2,r_3\}$ and, for each $i=1,2,3$, the pencil cut on $C$
by the lines through $r_i$ is transformed by $\crem$ into the pencil cut on $Q$ by the lines through $s_j=\crem(\la r_k, r_l\ra)$ with $k,l\ne i$. The latter pencil is elliptic if and only if $s_j$ is real and lie inside $Q_\R$.
\end{proof}

The Segre index $S^\P(C)$ of a parametrized real quartic $C\!:P^1\to P^2$ can be also calculated using its {\it chord diagram}.
Such a diagram, $D(\g)$,
defined for any
curve $\g:S^1\to F$ regularly immersed into a surface $F$,
is usually presented by a circle $S^1$ with
the preimages of each double point connected by a chord in the 2-disc $D^2$ bounded by $S^1$.
Some of these chords intersect
and, by definition,  the index $\rm{ind}(D(\g))$ of
$D(\g)$ is the number of such intersecting pairs.

In our case, $S^1=P^1_\R$ and $\gamma = C\vert_{P^1_\R} : P^1_\R\to P^2_\R$.
For a generic $C$, the latter is a regular immersion and a chord of $D(\g)$
connects $t_1,t_2\in P^1_\R$ if $C(t_1)=C(t_2)$ is a self-intersection point of the real locus of $C$,
in other words, a real {\it cross-like node}. In addition, $C$ may have nodes 
%where  
with 
$t_1,t_2\in P^1\sm P^1_\R$.
Namely, either a real node called a {\it solitary point} where $C(t_1)=C(t_2)$ with $t_2=\bar t_1$, or
a pair of conjugate imaginary nodes: $C(t_1)=C(t_2)\in P^2_\C \sm P^2_\R$ for one node
and $C(\bar t_1)=C(\bar t_2)$ for the conjugate one. If, in the latter case, the points $t_1, t_2$ (and, thus, also
the points $\bar t_1$, $\bar t_2$) lie in the same connected component of $P^1_\C\sm P^1_\R$, we call such an imaginary pair of nodes {\it
essential}, and otherwise {\it inessential}. We denote by $\rm{ind}_{im}(C)$ the number of essential pairs.

\begin{proposition}\label{Segre-for quartics-bis}
For a generic real rational plane quartic $C\in\P_\R$
the Segre index $S^\P(C)=S(l,X)$ is equal to $(-1)^{\rm{ind}(C)+\rm{ind}_{im}(C)}$.
\end{proposition}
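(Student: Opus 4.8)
The key observation is that $S^\P(C)$ has already been computed (Proposition \ref{Segre-for quartics}) as $(-1)^{int(B,Q)}$, where $int(B,Q)$ counts the real base points of the inverse Cremona transformation lying inside the conic $Q_\R$. So the whole task reduces to a self-intersection/topology identity: for a generic real rational plane quartic $C$,
$$int(B,Q)\equiv \mathrm{ind}(C)+\mathrm{ind}_{im}(C)\pmod 2.$$
First I would fix the elementary quadratic transformation $\crem$ based at the three nodes $r_1,r_2,r_3$ of $C$, sending $C$ to the conic $Q$ and producing the base triple $B=\{s_1,s_2,s_3\}$ of $\crem^{-1}$. Since $\crem$ restricts to a diffeomorphism $P^2_\R\setminus(\text{three lines})\to P^2_\R\setminus(\text{three lines})$ away from the exceptional/base loci, it carries the parametrized real locus $\gamma=C|_{P^1_\R}$ to the parametrized real locus of $Q$, changing the embedded topology of the image only near the $r_i$ and their conjugates. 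The idea is to compare the "complexity" of $C$ (measured by crossing chords of $D(\gamma)$ plus essential imaginary node pairs) before and after blowing up/down the three nodes, and to read off the position of each $s_j$ relative to $Q_\R$ from the local picture of the corresponding node of $C$.

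Concretely, I would argue node by node. A generic real quartic has three nodes which are either all real, or one real and one conjugate-imaginary pair; each real node is either cross-like (two real branches) or solitary (conjugate-imaginary branches). The Cremona transformation resolves each node $r_i$ and the point $s_j = \crem(\langle r_k,r_l\rangle)$ (with $\{i,k,l\}=\{1,2,3\}$) inherits a reality type from the exceptional line $\langle r_k, r_l\rangle$: it is real with a well-defined inside/outside position with respect to $Q_\R$ precisely when the node structure forces it, and imaginary otherwise. So $int(B,Q)$ can be rewritten as a count over the nodes of $C$: a real cross-like node contributes to $int(B,Q)$ according to whether its two branches lie on the same side or opposite sides of the rest of the real curve (equivalently, whether its chord in $D(\gamma)$ is crossed an odd number of times), a solitary point contributes in a fixed way, and an essential imaginary node pair contributes $1$ while an inessential one contributes $0$. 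Matching each of these local contributions against the corresponding term in $\mathrm{ind}(C)+\mathrm{ind}_{im}(C)$ — crossed chords for cross-like nodes, the parity bookkeeping for solitary points, and essential pairs for imaginary nodes — gives the congruence.

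The main obstacle I anticipate is the bookkeeping for the \emph{imaginary} nodes and the role of the distinction between essential and inessential pairs: one must check that, under the Cremona transformation, an inessential conjugate pair of nodes of $C$ blows down to a \emph{real} point $s_j$ that lies \emph{outside} $Q_\R$ (contributing $0$ on both sides), while an essential pair blows down to a real $s_j$ \emph{inside} $Q_\R$ (contributing $1$ on both sides) — or, depending on the configuration, that the exceptional line joining the other two nodes is the one that becomes imaginary. This is where the two components of $P^1_\C\setminus P^1_\R$ enter: whether the two preimages $t_1,t_2$ of an imaginary node lie in the same half determines which of the lines $\langle r_k,r_l\rangle$ is real, hence whether $s_j$ is real and on which side of the conic. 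The real cross-like case, by contrast, should reduce cleanly to the standard fact that the parity of the number of chords crossing a given chord of $D(\gamma)$ measures whether the two local branches of $\gamma$ at that node separate the circle's image the same way — which is exactly the "inside/outside" of $Q_\R$ after pushing forward by $\crem$. Once these local correspondences are established, summing over the three nodes and invoking Proposition \ref{Segre-for quartics} finishes the proof.
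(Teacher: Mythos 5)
Your overall strategy coincides with the paper's: reduce the statement, via Proposition \ref{Segre-for quartics}, to the identity $int(B,Q)\equiv \mathrm{ind}(C)+\mathrm{ind}_{im}(C)\pmod 2$ and prove that identity by tracking the Cremona transformation. However, the way you propose to localize $int(B,Q)$ node by node is wrong. Whether $s_m$ lies inside $Q_\R$ is not a property of the single node $r_m$ (nor of any one node): $s_m$ is the common point of the two lines $\langle s_m,s_j\rangle$ and $\langle s_m,s_k\rangle$, which under $\crem^{-1}$ are contracted to the \emph{other two} nodes $r_k$ and $r_j$, and $s_m$ lies inside $Q_\R$ exactly when the two chords of $D(\gamma)$ attached to \emph{those} two nodes intersect. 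So the correct dictionary is: intersecting \emph{pairs} of chords correspond bijectively to real base points inside $Q_\R$ (and, when two nodes form an imaginary conjugate pair, its essentiality corresponds to the unique real base point lying inside). Your per-node rule --- ``a real cross-like node contributes according to whether its own chord is crossed an odd number of times'' --- cannot give the right answer: writing $c_i$ for the number of chords crossing the $i$-th chord, the sum $\sum_i [c_i\ \mathrm{odd}]$ is congruent mod $2$ to $\sum_i c_i=2\,\mathrm{ind}(C)$, hence is always even, whereas $\mathrm{ind}(C)$ and $int(B,Q)$ can be odd. Concretely, if exactly one pair of chords crosses, your count gives $2$, while $int(B,Q)=\mathrm{ind}(C)=1$.

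The repair is the paper's one observation: take $Q_\R$ itself as the circle of the chord diagram, so that each chord is literally the segment of a real line $\langle s_i,s_j\rangle$ cut out by $Q_\R$. Then for each real base point $s_m$ the two lines through it are either both real --- and form a crossing pair of chords if and only if $s_m$ is inside $Q_\R$ --- or conjugate imaginary --- and each meets $Q$ in two points lying in the same component of $Q\smallsetminus Q_\R$ if and only if $s_m$ is inside $Q_\R$, which is precisely the essentiality of the corresponding conjugate pair of nodes. Summing over the base points yields $int(B,Q)=\mathrm{ind}(C)+\mathrm{ind}_{im}(C)$ exactly, not only mod $2$. Your analysis of the reality types of the $s_j$ and of essential versus inessential imaginary pairs is pointing in the right direction, but the bookkeeping must be organized around base points (equivalently, pairs of nodes), not around individual nodes.
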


\begin{proof}
Let us use the real locus $Q_\R$ of the conic $Q=\crem(C)$
as the circle of the diagram $D(C\vert_{P^1_\R})$. Then, each chord of the diagram becomes an interval of a real line
connecting a pair of points from $B=\{s_1,s_2,s_3\}$.

Choose some real point, say $s_1$, then points $s_2$, $s_3$ are either also real, or conjugate imaginary.
In the first case, lines $s_1s_2$ and $s_1s_3$ are real and form an intersecting pair of chords if and only if
$s_1$ lies inside $Q_\R$. In the second case, these two lines are imaginary and each of them intersects $Q$ at a pair of points
that both lie in
the same connected component of $Q\sm Q_\R$ if and only if $s_1$ lies inside $Q_\R$.
So,  $int(B,Q)=\rm{ind}(C)+\rm{ind}_{im}(C)$ and it remains
to apply Proposition \ref{Segre-for quartics}.
\end{proof}

\subsection{The case of septic 4-folds}
If $n=4$,
then $X$ is a  hypersurface of degree 7 in $P^5$ and the curve $C=\Jet^1(l,X)$ that we associate
with a pair $(l,X)$ is a parametrized rational sextic in $P^3$.
For a generic pair $(l,X)\in\X$, the following properties hold:
(1) $C\in\P\sm(\DP\cup\DPinf)$, (2) $C$ is non-singular, and (3) among the quadrisecants to $C$ there are no multiple ones,
so that $C$ has precisely 6 distinct quadrisecants, $M_i$, $i=1,\dots,6$ (cf.  Proposition \ref{Castelnuovo}).

Since through any 19 points in $P^3$ one can trace a cubic surface, we may pick 19 points on $C$ and find a cubic surface $F$ containing them.
Then, according to Bezout theorem, $F$ should contain $C$. Furthermore, again by Bezout theorem,
$F$ should contain also all the 6 quadrisecants.
Finally, it is also not difficult to show that assumptions (1)--(3) imply that
(4) $F$ is non-singular, and $M_i$, $i=1,\dots,6$, are pairwise disjoint . After that,
by Schl\"afli theorem, there exist 6 other lines $\hat M_i\subset F$ which extend the 6-tuple of lines $M_i$ to a double six. Contraction of the lines $\hat M_i$, $i=1,\dots,6$ gives a plane $P^2$ and
a standard lattice calculation (taking into account (2) -- (4)) shows that
the sextic $C$ is transformed 
%to 
into
a conic $Q\subset P^2$
disjoint from the
base-point set $B$ of the contraction and the lines $M_i$, $i=1,\dots,6$, are transformed into 6 conics each passing through all but one points of $B$.

\begin{proposition}\label{Segre-for-sextics}
For a real rational sextic $C\subset P^3$ satisfying the above genericity conditions (1)--(3),
the Segre index $S^\P(C)$ is equal to $(-1)^{int(B,Q)}$, where $int(B,Q)$ is the number of real points from $B$ that lie inside $Q_\R$.
\end{proposition}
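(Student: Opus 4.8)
The plan is to reduce this to the already-established Proposition \ref{Segre-for quartics} (the $n=3$ case) by the same contraction-and-pencil bookkeeping, now applied inside a nonsingular cubic surface. First I would make precise the construction sketched before the statement: given a real rational sextic $C\subset P^3$ satisfying (1)--(3), one shows (using Bezout and a lattice computation, all of which may be invoked as ``standard'' since the paper already does so) that $C$ together with its six quadrisecants $M_1,\dots,M_6$ lies on a unique nonsingular cubic surface $F$, that the $M_i$ are pairwise disjoint, and that contracting the six disjoint lines $\hat M_i$ of a complementary Schl\"afli half of a double six carries $F$ birationally to $P^2$, sending $C$ to a conic $Q$, sending each $M_i$ to a conic $Q_i\subset P^2$ through all but one of the six base points $B=\{b_1,\dots,b_6\}$, with $Q$ disjoint from $B$. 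Everything here is defined over $\R$ because $C$, hence $F$, and (after possibly reindexing) the distinguished half of the double six can be chosen real; the six base points $b_i$ are either all real or come in conjugate pairs, exactly as in the $n=3$ discussion.

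Next I would trace what the contraction does to the Segre data. By Corollary \ref{pushing-forward} the Segre index is $S^\P(C)=\prod_{i=1}^6 \Sw(C,M_i,M_i\cdot C)$, all multiplicities being $1$ by genericity (Proposition \ref{Castelnuovo}(3)). For a fixed $i$, the pencil of hyperplanes in $P^3$ containing the quadrisecant $M_i$ cuts on $C$ the residual degree-$2$ pencil $\{D^r_t\}$ whose Segre points decide whether $M_i$ is elliptic or hyperbolic. The birational map $F\dashrightarrow P^2$ identifies this pencil of planes-through-$M_i$ with the pencil of lines in $P^2$ through the one base point, call it $b_{j(i)}$, omitted by $Q_i$; indeed the hyperplanes containing $M_i$ pull back, on $F$, to the divisors in $|{-}K_F - [M_i]|$ (up to the appropriate twist), and under the contraction these become lines through $b_{j(i)}$, cutting on $Q$ a residual degree-$2$ pencil with the same two Segre points. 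Hence the residual pencil attached to $M_i$ is, as a point of $P(V)=\Sym^2(P^1)$ (with $P^1$ parametrizing $C$, equivalently $Q$), exactly the residual pencil cut on $Q$ by lines through $b_{j(i)}$; by the very same computation as in the proof of Proposition \ref{Segre-for quartics} this pencil is elliptic iff $b_{j(i)}$ is real and lies inside $Q_\R$. Since $i\mapsto j(i)$ is a bijection $\{1,\dots,6\}\to\{1,\dots,6\}$, we get $S^\P(C)=\prod_{i=1}^6 \Sw(C,M_i,M_i\cdot C)=\prod_{k=1}^6 (-1)^{[b_k \text{ real and inside } Q_\R]} = (-1)^{int(B,Q)}$, which is the claim.

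The main obstacle I expect is the precise verification that the pencil of hyperplanes through $M_i$, restricted to $C$, transports under the contraction to the pencil of lines through the omitted base point with the \emph{same} residual pair of points on $P^1$ — i.e. that the birational map respects not just the two pencils abstractly but their identification as elements of $\Sym^2(P^1)$ via the fixed parametrization of $C$. This is a compatibility-of-residual-divisors statement: one must check that $K_F$-twisting and the blow-down do not introduce spurious base points on $C$ (this is where genericity assumptions (1)--(4), in particular $C\cap \hat M_j=\varnothing$ for all $j$ and $M_i$ disjoint from the contracted lines, are used), so that $(-K_F - [M_i])\cdot C$ pushes forward to $(\text{line through } b_{j(i)})\cdot Q$ as a degree-$2$ divisor on the common $P^1$. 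Once that is in place, the reality bookkeeping is identical to the quartic case and the proof closes by reference to the proof of Proposition \ref{Segre-for quartics}. A secondary, purely real-geometric point to check is that the contraction can be performed over $\R$, which follows because a nonsingular real cubic surface with $C_\R\neq\varnothing$ has its $27$ lines forming Galois-stable configurations and one real half of a double six can always be selected; this is exactly the kind of statement already used implicitly in Subsection \ref{Cremona}.
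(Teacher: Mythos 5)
Your proposal is correct and follows exactly the route the paper intends: the paper's own ``proof'' is the single sentence that the argument is analogous to Proposition \ref{Segre-for quartics}, and what you have written is precisely that analogy spelled out (identifying the pencil of planes through $M_i$ with the pencil $|{-}K_F-M_i|$ of residual conics, hence with lines through the omitted base point $b_{j(i)}$, and noting that $C$ misses the contracted lines so the residual degree-$2$ divisors match on the common $P^1$). No discrepancy to report.
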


The proof is analogous to that of Proposition \ref{Segre-for quartics}.

\subsection{Generalization} The above results extend from $n=3$ and $4$ to any $n$ in the following way.

First of all, one can show that for any $n$ a generic real rational curve $C$ of degree $2n-2$ in $P^{n-1}$
lies on a rational surface $F(C)\subset P^{n-1}$ of degree $\binom{n-1}2$. To construct such pairs $(C, F(C))$ we
choose  a set $B\subset P^2$ of $\binom{n}2$ points in general position and consider the linear system
of degree $n-1$ curves in $P^2$ passing through $B$. This linear system is of
dimension $\frac{(n-1)(n+2)}2-\binom{n}2=n-1$. If $n\ge 4$, it defines an embedding $g_B:P^2(B)\to P^{n-1}$  where $P^2(B)$
is the plane blown up in $B$ (for $n=3$, instead of an embedding it gives an elementary Cremona transformation described in Subsection \ref{Cremona}).  The
degree of the image  is $(n-1)^2-\binom{n}2=\binom{n-1}2$.
Finally, we pick a conic $Q\subset P^2$ disjoint from $B$
and put $C=g_B(Q)$, $F(C)=g_B(P^2(B))$.
The fact that thus obtained $C$ is generic follows from an appropriate dimension count.

For each $s\in B$, there is a unique degree $n-2$ curve $A_s$ that passes through all the points of $B\sm\{s\}$.
Each of the $(n-3)$-dimensional $(2n-4)$-secants of $C$ can be obtained by taking a linear projective span of $g_B(A_s)\subset P^{n-1}$
for some $s\in B$ (these spans are dual to pencils of curves of degree $n-1$ that have $A_s$ as the fixed part and lines through $s$ as the moving part).
Finally, the same arguments as in the proof of Proposition  \ref{Segre-for quartics} give the following result.

\begin{proposition}
For a generic real rational degree $2n-2$ curve $C$ in $P^{n-1}$,
the Segre index $S^\P(C)$ is equal to $(-1)^{int(B,Q)}$, where $int(B,Q)$ is the number of real points from $B$ that lie inside $Q_\R$.
\qed
\end{proposition}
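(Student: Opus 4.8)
The plan is to follow the blueprint of the proof of Proposition \ref{Segre-for quartics}: reduce to the curves $C=g_B(Q)$ produced by the construction above, identify their $\binom n2$ maximal secants with the linear spans $\la g_B(A_s)\ra$, $s\in B$, and evaluate the Segre weight of each of them directly on the plane $P^2$. For the reduction, recall that $S^\P$ is locally constant on $\PR\sm\DPR$ (Proposition \ref{S-continuity}), so it suffices to prove the asserted formula on a dense subset of each chamber. The correspondence $(B,Q)\mapsto g_B(Q)$ is dominant onto $\P$ modulo $\PGL_n$ (an appropriate dimension count; recall that for $n\ge4$ the map $g_B$ is an embedding of the blown-up plane, while for $n=3$ it is the Cremona transformation already dealt with in Proposition \ref{Segre-for quartics}), hence the real curves $C=g_B(Q)$, with $B$ a real set of $\binom n2$ points of $P^2$ in general position and $Q$ a real smooth conic with $Q_\R\ne\varnothing$ and $Q\cap B=\varnothing$, are dense in $\PR\sm\DPR$; moreover for generic $(B,Q)$ one has $C\in\P\sm(\DP\cup\DPinf)$, so that by Proposition \ref{Castelnuovo}(3) each $M\in\Sec^{n-3}_{2n-4}(C)$ has multiplicity $1$ and there are exactly $\binom n2$ of them. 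Thus, by (\ref{function-S}),
$$S^\P(C)=\prod_{M\in\Sec^{n-3}_{2n-4}(C)}\Sw(C,M,M\cdot C).$$

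Next I would pin down the secants together with their residual pencils. For $s\in B$ let $A_s$ be the unique degree $n-2$ plane curve through $B\sm\{s\}$ and put $M_s=\la g_B(A_s)\ra$. A hyperplane of $P^{n-1}$ contains $M_s$ precisely when its $g_B$-preimage is a degree $n-1$ curve through $B$ containing $A_s$, that is, of the form $A_s+\ell_t$ with $\ell_t$ a line through $s$; these hyperplanes form a pencil, so $M_s$ is an $(n-3)$-plane, and, identifying the source $P^1$ of $C$ with $Q$, the divisor $M_s\cdot C$ equals $A_s\cap Q$ and has degree $\deg A_s\cdot\deg Q=2n-4$. Hence $M_s\in\Sec^{n-3}_{2n-4}(C)$, and letting $s$ run over $B$ produces the $\binom n2$ secants predicted by the Castelnuovo count, which therefore exhaust $\Sec^{n-3}_{2n-4}(C)$. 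The same description shows that the pencil of hyperplanes of $P^{n-1}$ through $M_s$ cuts on $C$ the divisors $(A_s\cap Q)+(\ell_t\cap Q)$; consequently the residual pencil of $M_s$ is exactly the degree-$2$ pencil $\{\ell_t\cap Q\}$ cut on $Q$ by the lines through $s$, its Segre involution sends $x\in Q$ to the second intersection point of $\la s,x\ra$ with $Q$, and its two Segre points are the contact points of the two tangent lines drawn from $s$ to $Q$ — literally the situation treated in the proof of Proposition \ref{Segre-for quartics}.

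It then remains to read off $\Sw$ and multiply. If $s\in B$ is real, the two tangent lines from $s$ to $Q$, hence the two Segre points of $M_s$, are both real exactly when $s$ lies in the exterior (M\"obius) component of $P^2_\R\sm Q_\R$ and imaginary conjugate exactly when $s$ lies in the interior (disc) component; by the definition of the Segre weight this yields $\Sw(C,M_s,M_s\cdot C)=+1$ in the first case and $-1$ in the second. If $\{s,\bar s\}$ is a conjugate pair of points of $B$, then $M_s$ and $M_{\bar s}=\overline{M_s}$ form a conjugate pair of non-real secants with complex conjugate residual configurations, so their weights coincide and their combined contribution to the product equals $+1$. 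Therefore only the real points of $B$ matter, and multiplying all the factors gives $S^\P(C)=(-1)^{int(B,Q)}$. Extending by continuity of $S^\P$ to an arbitrary generic $C$ completes the proof.

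I expect the geometric heart of the argument — the identification of the residual pencil of $M_s$ with the pencil of lines through $s$ on the conic $Q$ — to present no more difficulty than in the quartic case. The only genuinely $n$-dependent point, and the place where I would take most care, is the reduction step: checking that $(B,Q)\mapsto g_B(Q)$ is dominant and that its real image meets every chamber of $\PR\sm\DPR$, that the resulting $C$ is generic in $\P\sm(\DP\cup\DPinf)$ (so that Proposition \ref{Castelnuovo}(3) applies), and keeping the bookkeeping for conjugate pairs of base points straight. I expect this to be the main, though essentially routine, obstacle.
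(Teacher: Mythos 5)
Your proposal is correct and follows essentially the same route as the paper, which simply constructs the pairs $(C,F(C))$ via the blow-up $g_B$, identifies the $(n-3)$-dimensional $(2n-4)$-secants with the spans of $g_B(A_s)$ (dual to the pencils with fixed part $A_s$ and moving part the lines through $s$), and then invokes the same argument as for Proposition \ref{Segre-for quartics}. Your write-up fills in the same details (residual pencil $=$ lines through $s$ cut on $Q$, Segre points $=$ tangency points of the tangents from $s$, conjugate pairs cancelling, reduction to this family by a dimension count and continuity of $S^\P$) that the paper leaves implicit.
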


\end{document}